\newtheorem{thm}{Theorem}[section]
\newtheorem{prop}[thm]{Proposition}
\newtheorem{defn}[thm]{Definition}
\newtheorem{lem}[thm]{Lemma}
\newtheorem{cor}[thm]{Corollary}
\newtheorem{conj}[thm]{Conjecture}
\newtheorem{rem}[thm]{Remark}
\newtheorem{eg}[thm]{Example}
\newtheorem{thmi}{Theorem}
\theoremstyle{remark}
\newcommand{\subsubsubsection}{\@startsection{paragraph}{4}{\z@}%
 {1.0\Cvs \@plus.5\Cdp \@minus.2\Cdp}%
 {.1\Cvs \@plus.3\Cdp}%
 {\reset@font\sffamily\normalsize}
 }
\DeclareMathOperator{\Gal}{Gal}
\DeclareMathOperator{\id}{id}
\DeclareMathOperator{\Hom}{Hom}
\DeclareMathOperator{\Ext}{Ext}
\DeclareMathOperator{\Rep}{Rep}
\DeclareMathOperator{\Aut}{Aut} 
\DeclareMathOperator{\Ind}{Ind}
\DeclareMathOperator{\Spa}{Spa}
\DeclareMathOperator{\Sym}{Sym}
\DeclareMathOperator{\IC}{IC}
\DeclareMathOperator{\Sht}{Sht}
\DeclareMathOperator{\Spd}{Spd}
\DeclareMathOperator{\Perf}{Perf}
\DeclareMathOperator{\Gr}{Gr}
\DeclareMathOperator{\GL}{GL}
\DeclareMathOperator{\Bun}{Bun}
\DeclareMathOperator{\lis}{lis}
\DeclareMathOperator{\cone}{cone}
\DeclareMathOperator{\fib}{fib}
\DeclareMathOperator{\ad}{ad}
\DeclareMathOperator{\sHom}{\mathscr{H}\!\mathit{om}}
\DeclareMathOperator{\cHck}{\mathcal{H}\hspace{-0.1em}\mathit{ck}}
\newcommand{\bA}{\mathbb{A}}
\newcommand{\bB}{\mathbb{B}}
\newcommand{\bC}{\mathbb{C}}
\newcommand{\bD}{\mathbb{D}}
\newcommand{\bF}{\mathbb{F}}
\newcommand{\bL}{\mathbb{L}}
\newcommand{\bQ}{\mathbb{Q}}
\newcommand{\bZ}{\mathbb{Z}}
\newcommand{\cC}{\mathcal{C}}
\newcommand{\cF}{\mathcal{F}}
\newcommand{\cG}{\mathcal{G}}
\newcommand{\cH}{\mathcal{H}}
\newcommand{\cM}{\mathcal{M}}
\newcommand{\cO}{\mathcal{O}}
\newcommand{\cP}{\mathcal{P}}
\newcommand{\cS}{\mathcal{S}}
\newcommand{\cY}{\mathcal{Y}}
\newcommand{\sE}{\mathscr{E}}
\newcommand{\ol}{\overline}
\newcommand{\ul}{\underline}
\newcommand{\wh}{\widehat}
\newcommand{\wt}{\widetilde}
\newcommand{\lra}{\longrightarrow}
\newcommand{\xra}{\xrightarrow}
\newcommand{\cf}{\textit{cf.\ }}
\begin{document}

\title
{Convolution morphisms and Kottwitz conjecture} 

\author{Naoki Imai}
\date{}
\maketitle
\begin{abstract}
We define etale cohomology of the 
moduli spaces of mixed characteristic local shtukas 
so that it gives smooth representations 
including the 
case where the relevant elements of the Kottwitz set are both non-basic. 
Then we relate the etale cohomology 
of different moduli spaces 
of mixed characteristic local shtukas 
using convolution morphisms, duality morphisms and 
twist morphisms. 
As an application, we show the Kottwitz conjecture 
in some new cases including the cases for 
all inner forms of $\mathrm{GL}_3$ and minuscule cocharacters. 
We study also some non-minuscule cases and 
show that 
the Kottwitz conjecture is true for 
any inner form of $\mathrm{GL}_2$ and 
any cocharacter if the Langlands parameter is 
cuspidal. 
On the other hand, 
we show that 
the Kottwitz conjecture does not hold as it is 
in non-minuscule cases 
if the Langlands parameter is not cuspidal. 
Further, we show that a generalization of 
the Harris--Viehmann conjecture for the moduli spaces 
of mixed characteristic local shtukas 
does not hold in Hodge--Newton irreducible cases. 
\end{abstract}

\footnotetext{2010 \textit{Mathematics Subject Classification}. 
 Primary: 11F70; Secondary: 14G35.} 

\section*{Introduction}

The Kottwitz conjecture says that 
etale cohomology of Rapoport--Zink spaces 
or more generally local Shimura varieties 
realize the local Langlands correspondence 
(\cf \cite[Conjecture 5.1]{RapNAper}, 
\cite[Conjecture 7.4]{RVlocSh}). 
In \cite{ScWeBLp}, Scholze 
constructs local Shimura varieties 
as special cases of moduli spaces of 
mixed characteristic local shtukas. 
The Kottwitz conjecture makes sense also for 
the moduli spaces of 
mixed characteristic local shtukas. 
A weak version of the conjecture is studied by 
Hansen--Kaletha--Weinstein in \cite{HKWKotloc}. 
In the weak version, 
we ignore the action of the Weil groups 
and have an equality up to 
representations 
which have trace $0$ on regular elliptic elements. 

Let $p$ be a prime number. 
Let $G$ be a connected reductive group over a $p$-adic number field $F$. 
For $b , b' \in G(\breve{F})$ and 
a system $\mu_{\bullet}=(\mu_1, \ldots , \mu_m)$ of cocharacters of $G$, 
we define a moduli space $\Sht_{b,b'}^{\mu_{\bullet}}$ 
of mixed characteristic local shtukas. See \S \ref{sec:Moduli} 
for the precise definition. 

In this paper, we introduce convolution morphisms, duality morphisms and 
twist morphisms between 
moduli spaces of mixed characteristic local shtukas. 
The convolution morphism is related to 
a convolution morphism on affine Grassmannians. 
Using these morphisms and 
the convolution products in the geometric Satake equivalence for 
$B_{\mathrm{dR}}^+$-Grassmannians, 
we relate the etale cohomology 
of different moduli spaces 
of mixed characteristic local shtukas. 
More concretely, we show the following: 
\begin{thmi}[Corollary \ref{cor:formula}]\label{thmi:formula}
Assume that $G$ is quasi-split and take a Borel pair 
$T \subset B$ of $G$. 
Let $\mu_{\bullet}=(\mu_1 , \ldots , \mu_m)$ be 
a system of dominant cocharacters of $T$ and 
$b_0 , b_m \in G(\breve{F})$. 
Let $E$ be a finite extension of $F$ 
containing the fields of definition of $\mu_i$ for $1 \leq i \leq m$. 
We have 
\begin{align*}
 \sum_{([b_i]
 )_{1 \leq i \leq m-1} \in I_{b_0,b_m}^{\mu_{\bullet}}} 
 H_* 
 \left( \prod_{i=1}^{m-1} G_{b_i}(F), 
 \bigotimes_{1 \leq i \leq m}  H_{\mathrm{c}}^* 
 (\Sht_{b_{i-1},b_i}^{\mu_i}) \otimes 
 \bigotimes_{1 \leq i \leq m-1} \delta_{b_i} 
 \right) & \\ 
 =
 \sum_{\lambda \in X_*(T)^+/\Gamma} V_{\mu_{\bullet}}^{\lambda} & \otimes 
 H_{\mathrm{c}}^* 
 (\Sht_{b_0,b_m}^{\lambda}) 
\end{align*}
as virtual representations of 
$G_{b_0}(F) \times G_{b_m}(F) \times W_E$, 
where $I_{b_0,b_m}^{\mu_{\bullet}}$ is a finite set defined in 
\S \ref{sec:coh}. 
\end{thmi}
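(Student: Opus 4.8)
The plan is to compute the compactly supported cohomology of the convolution moduli space $\Sht_{b_0,b_m}^{\mu_\bullet}$ in two different ways and equate the results: a ``geometric'' computation via the convolution morphism and the geometric Satake equivalence, giving the right-hand side, and a ``combinatorial'' computation via the stratification of $\Sht_{b_0,b_m}^{\mu_\bullet}$ by the isomorphism classes of the intermediate shtukas, giving the left-hand side. Write $|\mu_\bullet|=\mu_1+\cdots+\mu_m$.

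First I would invoke the convolution morphism $\Sht_{b_0,b_m}^{\mu_\bullet}\to\Sht_{b_0,b_m}^{\le|\mu_\bullet|}$, induced by the convolution morphism of the relevant $B_{\mathrm{dR}}^+$-Grassmannians and (ind-)proper. Pushing the intersection complex forward along the convolution morphism of $B_{\mathrm{dR}}^+$-Grassmannians and applying the geometric Satake equivalence in its $W_E$-equivariant form gives a Weil-equivariant decomposition $\bigoplus_{\lambda\in X_*(T)^+/\Gamma}V_{\mu_\bullet}^{\lambda}\otimes\IC_{\lambda}$, where $V_{\mu_\bullet}^{\lambda}=\Hom_{\widehat{G}}(V_{\lambda},V_{\mu_1}\otimes\cdots\otimes V_{\mu_m})$ carries its induced $W_E$-action and only finitely many $\lambda$ contribute. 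Transporting this to the moduli of shtukas via proper base change yields
\begin{align*}
 H_{\mathrm c}^*(\Sht_{b_0,b_m}^{\mu_\bullet})
 =\sum_{\lambda\in X_*(T)^+/\Gamma}V_{\mu_\bullet}^{\lambda}\otimes H_{\mathrm c}^*(\Sht_{b_0,b_m}^{\lambda})
\end{align*}
as virtual $J_{b_0}(F)\times J_{b_m}(F)\times W_E$-representations, which is the right-hand side of the asserted identity.

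On the other hand, to produce the left-hand side I would use that $\Sht_{b_0,b_m}^{\mu_\bullet}$ parametrizes chains of $\mu_i$-bounded modifications through varying intermediate shtukas $b_1,\dots,b_{m-1}$; stratifying by their isomorphism classes $([b_i])_{1\le i\le m-1}\in I_{b_0,b_m}^{\mu_\bullet}$ (the finite set of \S\ref{sec:coh}) presents the $([b_i])$-stratum as the quotient $\bigl(\prod_{i=1}^{m}\Sht_{b_{i-1},b_i}^{\mu_i}\bigr)\big/\prod_{i=1}^{m-1}J_{b_i}(F)$, where $J_{b_i}(F)$ acts through its target action on the $i$-th factor and its source action on the $(i+1)$-st. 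Since for a locally profinite group $J$ acting on a space $X$ the compactly supported cohomology of the quotient is computed by derived coinvariants, $H_{\mathrm c}^*(X/J)=H_*(J,H_{\mathrm c}^*(X))$, and $H_{\mathrm c}^*$ is multiplicative on products by K\"unneth, the $([b_i])$-stratum contributes $H_*\bigl(\prod_{i=1}^{m-1}J_{b_i}(F),\bigotimes_{1\le i\le m}H_{\mathrm c}^*(\Sht_{b_{i-1},b_i}^{\mu_i})\bigr)$. Summing over $I_{b_0,b_m}^{\mu_\bullet}$ recovers the left-hand side, and comparison with the first step proves the identity.

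The hard part will be the first step: identifying the convolution pushforward with $\bigoplus_\lambda V_{\mu_\bullet}^{\lambda}\otimes\IC_{\lambda}$ \emph{together with the correct Weil-group action}, which needs the geometric Satake equivalence for $B_{\mathrm{dR}}^+$-Grassmannians with full $W_E$-equivariance and its compatibility with convolution, plus properness of the convolution morphism so that passing through the affine Grassmannian does not alter $H_{\mathrm c}^*$. Two subsidiary points require the finiteness theory for the cohomology of moduli of shtukas: that every term in sight actually defines an element of the Grothendieck group of smooth representations, so that the stratification spectral sequence in the last step collapses to an identity of virtual representations; and the careful tracking of the duality and normalization conventions (relating $\mu_i$ to its dual and $\IC_{\lambda}$ to $\Sht_{b_0,b_m}^{\lambda}$) needed to land exactly the multiplicities $V_{\mu_\bullet}^{\lambda}$ rather than their contragredients.
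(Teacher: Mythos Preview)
Your overall two-step strategy matches the paper's: compute $R\Gamma_{\mathrm{c}}(\Sht_{b_0,b_m}^{\mu_\bullet},\IC_{\mu_\bullet})$ first via the convolution morphism and geometric Satake (giving the right-hand side), then via the stratification by isomorphism types of the intermediate bundles (giving the left-hand side). The first step is fine.

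The gap is in your description of the strata. You write that the $([b_i])$-stratum is the quotient of $\prod_i \Sht_{b_{i-1},b_i}^{\mu_i}$ by $\prod_{i=1}^{m-1} J_{b_i}(F)$. This is only correct when every intermediate $b_i$ is basic. In general the automorphism sheaf of $\sE_{b_i}$ on the Fargues--Fontaine curve is the larger group $\wt{J}_{b_i}=\wt{J}_{b_i}^{>0}\rtimes\underline{J_{b_i}(F)}$, and the stratum is
\[
\bigl(\Sht_{b_0,b_1}^{\mu_1}\times_{\Spd\breve E}\cdots\times_{\Spd\breve E}\Sht_{b_{m-1},b_m}^{\mu_m}\bigr)\big/\bigl(\wt{J}_{b_1}\times\cdots\times\wt{J}_{b_{m-1}}\bigr).
\]
The paper stresses that even when $b_0,b_m$ are basic, non-basic classes do appear in $I_{b_0,b_m}^{\mu_\bullet}$, so $\wt{J}_{b_i}^{>0}$ is genuinely nontrivial and your ``locally profinite group'' argument does not apply directly.

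What rescues the computation is a separate input you are missing: the relative homology of $\wt{J}_{b_i}^{>0}$ is $\ol{\bQ}_\ell[-2N_{b_i}]$ (Lemma~\ref{lem:Jbnat} in the paper), so passing from the $\wt{J}_{b_i}$-quotient to $J_{b_i}(F)$-coinvariants costs only an even degree shift $[2N_{b_\bullet}]$. That shift is invisible at the level of virtual representations, which is why the final formula involves only $H_*(\prod J_{b_i}(F),-)$. But you need to state and use this fact; as written, your identification of the stratum is false and the step ``$H_{\mathrm c}^*(X/J)=H_*(J,H_{\mathrm c}^*(X))$ for $J$ locally profinite'' is being applied to the wrong $J$.
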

We note that even if $b_0$ and $b_m$ are basic, 
non-basic elements appear in 
$I_{b_0,b_m}^{\mu_{\bullet}}$ and 
there are contributions from cohomology of non-basic 
moduli spaces of local shtukas. 
For a derived category version 
of the above statement, see Proposition \ref{prop:decomp}. 

As an application of Theorem \ref{thmi:formula} (or its derived category version) 
together with duality morphisms and twist morphisms, 
we show new cases of 
the Kottwitz conjecture for 
the moduli spaces of 
mixed characteristic local shtukas. 
In particular, we show 
the following: 

\begin{thmi}[Corollary \ref{cor:minu}]\label{thmi:GL3}
Let $G$ be an inner form of $\GL_3$ over $F$. 
Let $(G,b,\mu)$ be a local shtuka datum such that 
$\mu$ is minuscule and $b$ is basic. 
Let $\varphi \colon W_F \to {}^L\GL_3$ 
be a discrete local L-parameter. 
Let $\pi$ and $\pi_b$ be the 
irreducible smooth representations of 
$G(F)$ and $G_b(F)$ corresponding to 
$\varphi$ via the local Langlands correspondence. 
Then we have 
\[
 \cH^* \left( R\Hom_{G(F)} 
 \left( R\Gamma_{\mathrm{c}} (\Sht_{1,b}^{\mu}), \pi 
 \right) \right) \simeq 
 \pi_{b} \boxtimes (r_{\mu} \circ \varphi) 
\]
as representations of $G_{b}(F) \times W_F$. 
\end{thmi}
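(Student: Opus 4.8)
The plan is to deduce the identity from the convolution formula of Theorem~\ref{thmi:formula}, in its derived form Proposition~\ref{prop:decomp}, combined with the duality and twist morphisms. Together these three tools reduce the cohomology of $\Sht^{\mu}_{b,1}$ attached to the basic minuscule datum $(G,b,\mu)$ either to cohomology attached to proper Levi subgroups of $\GL_3$ and their inner forms --- hence to the Kottwitz conjecture for $\GL_2$ and for tori, which is classical --- or to cohomology of spaces for central cocharacters, which are points up to a Weil twist. I expect the cuspidal and the non-cuspidal discrete cases to diverge: if $\varphi$ is cuspidal then $\pi$ is supercuspidal, so $R\Hom_{G(F)}(-,\pi)$ annihilates every parabolically induced contribution and the bookkeeping collapses to the basic pieces; if $\varphi=\chi\otimes\mathrm{Sp}(3)$ (the non-supercuspidal discrete parameter) one must track the non-basic strata and the monodromy operator.

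\emph{Normalisation.} Using the twist morphism one reduces to $\mu=(1,0,0)$, so $r_\mu=\mathrm{std}$: every minuscule cocharacter of $\GL_3$ is a central twist of $(1,0,0)$ or of $(1,1,0)=-w_0(1,0,0)$, the twist morphism changes $b$ and $\varphi$ only by the corresponding character, and the duality morphism relates the $\Sht^{(1,1,0)}$-situation for $G$ to a $\Sht^{(1,0,0)}$-situation for an inner form of $\GL_3$ with the two group actions exchanged. Since $b$ is basic, $J_b$ is again an inner form of $\GL_3$, so after finitely many such moves one may assume $G=\GL_3$, that $b$ is basic with $\kappa(b)=1$, and that $\Sht^{\mu}_{b,1}$ is the infinite-level basic Rapoport--Zink space for $\GL_3$ (equivalently, after duality, a Drinfeld-type space).

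\emph{Reduction via convolution.} Apply Proposition~\ref{prop:decomp} to a well-chosen system $\mu_\bullet=(\mu_1,\ldots,\mu_m)$ with minuscule constituents (so each $\Sht^{\mu_i}_{b_{i-1},b_i}$ is again of Rapoport--Zink type) and with $V_{\mu_\bullet}^{\mu}\ne 0$ in the decomposition $\bigoplus_\lambda V_{\mu_\bullet}^\lambda$; the $\lambda\ne\mu$ terms on the right-hand side are then either central or of strictly smaller size (so amenable to induction), while on the left-hand side the classes $(b_i)\in I_{b,1}^{\mu_\bullet}$ contribute, besides the basic ones, only through non-basic strata whose cohomology is --- by the Harris--Viehmann reduction, applicable because these strata are Hodge--Newton reducible --- parabolically induced from the groups $\GL_2\times\GL_1$ and $\GL_1\times D_{1/2}^{\times}$. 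Feeding in the classical Kottwitz conjecture for $\GL_2$ (and its inner form) and $\GL_1$, and in the cuspidal case killing all non-basic and all $\lambda\ne\mu$ contributions by supercuspidality of $\pi$, one solves the resulting identity of virtual $J_b(F)\times W_F$-representations for $R\Hom_{G(F)}(R\Gamma_{\mathrm c}(\Sht^{\mu}_{b,1}),\pi)$ and obtains $\pi_b\boxtimes(r_\mu\circ\varphi)$; in the non-cuspidal discrete case the non-basic contributions must instead be matched term by term, which is possible because on each of them the assertion is already known from the smaller-rank case.

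\emph{Upgrading.} It remains to promote this virtual equality to an isomorphism of honest representations with the correct Weil action. For supercuspidal $\pi$ the $\pi$-isotypic part of $R\Gamma_{\mathrm c}(\Sht^{\mu}_{b,1})$ is concentrated in a single degree (known purity/vanishing results), so the virtual identity already yields $\Hom_{G(F)}(R\Gamma_{\mathrm c}(\Sht^{\mu}_{b,1}),\pi)\cong\pi_b^{\oplus 3}$ as $J_b(F)$-representations once the ``trace $0$ on regular elliptic elements'' ambiguity of the weak version is removed --- which it is, since for a discrete parameter of $\GL_3$ the relevant $L$-packet is the singleton $\{\pi_b\}$ and $\pi_b$ is essentially square-integrable, hence elliptic. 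The $W_F$-action on the three-dimensional multiplicity space is then forced by the $W_F$-equivariance of all the morphisms used together with the determination of its semisimplification (Frobenius eigenvalues, equivalently $L$- and $\varepsilon$-factors), which identifies it with $r_\mu\circ\varphi$. The main obstacle is this last step when $\varphi=\chi\otimes\mathrm{Sp}(3)$: then $r_\mu\circ\varphi$ is not semisimple as a Weil--Deligne representation, the $\pi$-isotypic cohomology is spread over several degrees, and one has to recover the monodromy operator --- not merely the $W_F$-semisimplification --- by analysing how the cohomology of the non-basic strata links the graded pieces, which is exactly where the cuspidal and the non-cuspidal arguments part ways.
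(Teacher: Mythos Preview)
Your normalisation step is incomplete. Twist preserves $(\kappa(b)\bmod 3,\kappa(b')\bmod 3)$, since for central $a\in Z^0(\breve F)$ one has $\kappa(a)=n\cdot v(a)\in 3\bZ$; duality sends $(x,y)\mapsto(-y,-x)$; and inversion swaps the entries while replacing $\mu$ by $-\mu$. Under these moves the pairs with $\mu$ minuscule non-central fall into two orbits: the Lubin--Tate/Drinfeld orbit $\{(1,0),(0,2),(0,1),(2,0)\}$, which is exactly Dat's theorem, and the residual orbit $\{(2,1),(1,2)\}$, in which $G(F)$ and $J_b(F)$ are \emph{both} non-split inner forms of $\GL_3$. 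Your reduction lands only in the first orbit; the second, which does occur once $G$ ranges over all inner forms, is precisely the new content of the theorem and cannot be reached by the moves you list.

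For this residual case the paper's argument is quite different from what you sketch, and considerably simpler. One arranges $b'=\iota(b)$ with $\kappa(b)\equiv 1$, $\kappa(b')\equiv -1$, and applies Proposition~\ref{prop:decomp} with $\mu_\bullet=(\mu_1,\mu_1)$ for $\mu_1=(1,0,\ldots,0)$. The decisive observation is that $I_{b,\iota(b)}^{(\mu_1,\mu_1)}=\{[1]\}$: the intermediate class is forced to be basic, so there are no non-basic strata and Harris--Viehmann never enters. Each of the two factors is computed by Dat (case~\ref{en:d1} of Theorem~\ref{thm:Kcases}), and via Lemma~\ref{lem:RHom} the left-hand side becomes $\pi_b\boxtimes(\varphi\otimes\varphi)$ directly in the derived category, uniformly for every discrete $\varphi$. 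The right-hand side has exactly two summands, for $\lambda=(2,0,\ldots,0)$ and $\lambda=(1,1,0,\ldots,0)$, and these are separated not by any induction on rank but by Proposition~\ref{prop:invcomp}: the duality involution $\theta_b$ on the convolution matches the swap $\sigma$ on $V_{\mu_1}\otimes V_{\mu_1}$, so the $\pm 1$ eigenspaces pick out the $\Sym^2$ and $\wedge^2$ pieces. This yields the minuscule case $\lambda=(1,1,0)$ (and the non-minuscule $\lambda=(2,0,0)$ as a byproduct, which is why the introduction stresses that non-minuscule spaces appear in the proof).

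In particular there is no cuspidal/non-cuspidal dichotomy and no upgrading step: the isomorphism holds in $R\Hom$ throughout. The mechanism you propose for recovering the monodromy of $r_\mu\circ\varphi$ when $\varphi=\chi\otimes\mathrm{Sp}(3)$, by linking graded pieces through non-basic strata, is neither needed nor obviously available.
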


It is remarkable that the proof of Theorem \ref{thmi:GL3} 
requires moduli spaces of local shtukas for non-minuscule cocharacters, 
even though the statement involves only minuscule cocharacters: 
Using a derived category version of Theorem \ref{thmi:formula}, 
we can calculate a sum of cohomology of 
moduli spaces of local shtukas for a minuscule cocharacter and 
a non-minuscule cocharacter. 
Then we separate them into each term using the duality isomorphism. 
We also note that it is essential to introduce convolution morphisms for 
moduli spaces of mixed characteristic local shtukas with multiple legs 
in \S \ref{sec:conv}, 
because we use it in the proof of a compatibility result, 
Proposition \ref{prop:invcomp}, 
which plays an important role in the proof of Theorem \ref{thmi:GL3}. 

Theorem \ref{thmi:formula} is useful also for studying 
non-minuscule cases. 
We give inductive formulas that enable us to calculate 
the cohomology of moduli spaces of 
local shtukas for 
inner forms of $\GL_2$. 
We can summarize the results in 
\S \ref{sec:ind} as the following theorem: 

\begin{thmi}\label{thmi:Ind}
Let $G$ be an inner form of $\GL_2$ over $F$. 
Let $(G,b,\mu)$ be a local shtuka datum. 
Let $\rho$ be a discrete series representation of 
$G_b(F)$. 
We put 
\[
 H_{\mathrm{c}}^{\bullet} (\Sht_{1,b}^{\mu})[\rho] = 
 \sum_{i,j \in \bZ} (-1)^{i+j} \Ext_{G_b(F)}^i 
 \left( R^j\Gamma_{\mathrm{c}} (\Sht_{1,b}^{\mu}), \rho 
 \right) . 
\]
Then we can calculate 
$H_{\mathrm{c}}^{\bullet} (\Sht_{1,b}^{\mu})[\rho]$ 
by inductive formulas. 
In particular, when $b$ is basic, we see the following: 
\begin{enumerate}
\item 
The Kottwitz conjecture for $\Sht_{1,b}^{\mu}$ 
holds if the L-parameter is cuspidal or 
$G$ is not quasi-split. 
\item 
The Kottwitz conjecture for $\Sht_{1,b}^{\mu}$ 
does not hold in general 
if the L-parameter is not cuspidal and 
$G$ is quasi-split. 
\end{enumerate}
\end{thmi}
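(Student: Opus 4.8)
The plan is to proceed entirely by reduction to the inductive formulas promised in \S \ref{sec:ind}, so the real content of Theorem \ref{thmi:Ind} is establishing those formulas; items (1) and (2) are then extractions from the resulting explicit answer. First I would set up the induction on the size of the cocharacter $\mu$ (say on $\langle 2\rho, \mu\rangle$, or equivalently on the dimension of the affine Schubert cell). For $\mu$ minuscule the space $\Sht_{1,b}^{\mu}$ is a local Shimura variety for $\GL_2$ or its inner form, and the base case is the classical computation (Lubin--Tate / Drinfeld tower): its cohomology realizes the local Langlands and Jacquet--Langlands correspondences, so $H_{\mathrm{c}}^{\bullet}(\Sht_{1,b}^{\mu})[\rho]$ is known. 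For the inductive step, apply the derived-category version of Theorem \ref{thmi:formula} (Proposition \ref{prop:decomp}) to a factorization $\mu_{\bullet}=(\mu_1,\mu_2)$ with $\mu_1$ minuscule (the standard cocharacter $(1,0)$ of $\GL_2$, up to twist) and $\mu_2$ of size $\langle 2\rho,\mu\rangle - 1$: the left-hand side is a sum over intermediate $[b_1]\in I_{1,b}^{\mu_{\bullet}}$ of $H_*(J_{b_1}(F), H_{\mathrm{c}}^*(\Sht_{1,b_1}^{\mu_1})\otimes H_{\mathrm{c}}^*(\Sht_{b_1,b}^{\mu_2}))$, while the right-hand side is $\sum_{\lambda} V_{\mu_{\bullet}}^{\lambda}\otimes H_{\mathrm{c}}^*(\Sht_{1,b}^{\lambda})$. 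For $\GL_2$ the tensor product decomposition is clean: $\lambda$ ranges over $\mu$ and at most one smaller weight $\mu''$ (corresponding to pulling out a central twist by the determinant cocharacter), so one can solve for $H_{\mathrm{c}}^*(\Sht_{1,b}^{\mu})[\rho]$ in terms of the strictly smaller pieces $H_{\mathrm{c}}^*(\Sht_{1,b_1}^{\mu_1})$, $H_{\mathrm{c}}^*(\Sht_{b_1,b}^{\mu_2})$ and $H_{\mathrm{c}}^*(\Sht_{1,b}^{\mu''})$. Combined with the duality and twist morphisms to rewrite $H_{\mathrm{c}}^*(\Sht_{b_1,b}^{\mu_2})$ in terms of spaces with trivial first index, this is the inductive formula.

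Having the formula, part (1) follows by checking that when the L-parameter attached to $\rho$ is cuspidal, every contribution from a non-basic intermediate $b_1$ vanishes after applying $R\Hom(-,\rho)$. The point is that for non-basic $b_1$ the group $J_{b_1}(F)$ is a proper Levi (a torus for $\GL_2$), and the parabolic-induction functor appearing on the left-hand side of Proposition \ref{prop:decomp} sends the cohomology of the Lubin--Tate-type factor to a representation induced from that torus; pairing with a cuspidal $\rho$ (equivalently a supercuspidal $\pi$) kills it by the usual adjunction between Jacquet and induction functors. What survives is exactly the basic term, and matching it with the minuscule base case plus the geometric Satake weight multiplicities $V_{\mu}^{\lambda}$ gives precisely $\pi_b\boxtimes(r_\mu\circ\varphi)$, i.e. the Kottwitz conjecture. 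When $G$ is not quasi-split (the nonsplit inner form, a division algebra), $G(F)$ has no nontrivial parabolics either, so essentially all tempered representations are ``cuspidal'' from this point of view and the same argument applies uniformly, giving the second clause of (1). One should also record the sign/shift bookkeeping: the $(-1)^{i+j}$ in the definition of $H_{\mathrm{c}}^{\bullet}[\rho]$ is arranged so that Euler characteristics behave additively across the formula, so no parity obstruction arises.

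For part (2), I would exhibit the failure concretely rather than abstractly. Take $G=\GL_2$ (quasi-split), $b$ basic, a non-minuscule $\mu$ (the smallest case, $\mu=(2,0)$, already suffices), and a parabolically induced L-parameter $\varphi = \chi_1\oplus\chi_2$, so the corresponding $\pi$ is the irreducible principal series (or a twist of Steinberg in the reducible case). Running the inductive formula, the non-basic intermediate terms no longer die: the Jacquet module of the principal series is nonzero, so the induced contributions from the torus $J_{b_1}(F)$ genuinely contribute to $H_{\mathrm{c}}^{\bullet}(\Sht_{1,b}^{\mu})[\rho]$. Computing both sides, one finds that $H_{\mathrm{c}}^{\bullet}(\Sht_{1,b}^{\mu})[\rho]$ is not of the form $(\dim)\cdot (r_\mu\circ\varphi)$ predicted by Kottwitz --- concretely, the $W_F$-representation one gets differs from $r_\mu\circ\varphi$ by an extra summand coming from the non-basic stratum (a Tate-twisted character that is not a subquotient of $\mathrm{Sym}^2\varphi$ with the right multiplicity). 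Displaying this single counterexample, with the explicit virtual character on both sides, proves (2).

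The main obstacle I anticipate is not the structural induction but the bookkeeping of the intermediate set $I_{1,b}^{\mu_{\bullet}}$ and the precise Weil-group action in the non-minuscule steps. In Proposition \ref{prop:decomp} the left-hand side already mixes basic and non-basic moduli spaces, and for $\GL_2$ one must check that the only non-basic $[b_1]$ occurring is the one with $J_{b_1}\cong T$ and that its cohomology is computed by (an unramified twist of) the Lubin--Tate tower for that smaller leg --- this uses the Hodge--Newton-type reducibility that is available for $\GL_2$ but must be invoked carefully. Equally, tracking the Tate twists through the duality morphism (which introduces a shift by $\langle 2\rho,\mu\rangle$ and a twist by the top weight) and through the geometric Satake multiplicity spaces $V_{\mu_{\bullet}}^{\lambda}$ is where sign and Frobenius-weight errors would creep in; getting these exactly right is what makes the counterexample in (2) quantitative rather than merely qualitative. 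Once the Weil-group action is pinned down, both (1) and (2) are immediate reads of the formula, so I would allocate most of the effort to that compatibility.
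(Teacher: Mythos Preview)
Your overall strategy matches the paper's closely: both argue by induction on $\langle 2\rho_G,\mu\rangle$, peel off a minuscule leg via Proposition \ref{prop:decomp} and Lemma \ref{lem:RHom}, use Dat's theorem for the base case, and exhibit the failure at $\mu=(2,0)$ with a Steinberg representation (your parenthetical ``twist of Steinberg'' is the relevant case, since $\rho$ must be discrete). The paper's inductive formulas are Proposition \ref{prop:indf} (non-basic $b$), Proposition \ref{prop:basind} (basic $b$, $\kappa(b)$ even), and Proposition \ref{prop:basdiv} (basic $b$, $\kappa(b)$ odd), with Theorem \ref{thm:nonmin} assembling them.

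Two places where your sketch diverges from the paper and would need adjustment. First, you propose to use duality and twist to rewrite $H_{\mathrm{c}}^*(\Sht_{b_1,b}^{\mu_2})$ as a space with trivial first index; the paper does not do this, and it is not clear it helps. Instead the paper runs a separate induction for \emph{non-basic} $b$ (Proposition \ref{prop:indf}), showing directly that those contributions are linear combinations of proper parabolic inductions (Corollary \ref{cor:parcomb}). This non-basic computation is a genuine ingredient you have not isolated: the basic inductive step (Proposition \ref{prop:basind}) produces a non-basic term whenever $m$ is even and $\varphi$ is non-cuspidal, and you need to know what it is.

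Second, your argument for the ``$G$ not quasi-split'' clause of (1) --- that $D^\times$ has no parabolics so every representation is cuspidal and the same vanishing applies --- is not the mechanism the paper uses, and as stated it is not quite an argument: the non-basic intermediate terms live on the $\GL_2$ side after transferring via Proposition \ref{prop:Mtriv}, so ``$D^\times$ has no parabolics'' does not by itself kill them. The paper's route is indirect: one first shows (via Corollary \ref{cor:parcomb} and Proposition \ref{prop:basind}) that for $\kappa(b)$ even the error $H_{\mathrm{c}}^{\bullet}(\Sht_{1,b'}^{(m,0)})[\pi_{b'}] - \pi_1\boxtimes(r_{(m,0)}\circ\varphi)$ is a sum of proper parabolic inductions of $\GL_2(F)$, and then Proposition \ref{prop:basdiv} expresses the $\kappa$-odd case as $R\Hom_{\GL_2(F)}(R\Gamma_{\mathrm{c}}(\Sht_{b_1,1}^{(1,0)}),-)$ applied to this; Dat's theorem says this functor annihilates parabolic inductions, so the error dies. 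Your intuition is correct but the execution passes through the quasi-split computation rather than bypassing it.
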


We note that in the first statement of Theorem \ref{thmi:Ind}, 
$b$ can be non-basic and $\mu$ can be non-minusucle. 
Even if we are interested only in 
$H_{\mathrm{c}}^{\bullet} (\Sht_{1,b}^{\mu})[\rho]$ for a basic $b$, 
the inductive formulas for the calculation of 
$H_{\mathrm{c}}^{\bullet} (\Sht_{1,b}^{\mu})[\rho]$ 
involve 
moduli spaces of local shtukas for non-basic elements. 
Therefore it is important to study 
non-basic cases at the same time. 

We note that Theorem \ref{thmi:Ind} is compatible with the result in 
\cite{HKWKotloc}, 
since the error term involves only 
representations 
which have trace $0$ on regular elliptic elements. 
We remark also that this error term 
supports that 
the expectation \cite[Remark 4.6]{FarGover} 
in the geometrization of the local Langalnds correspondence 
is true. 

Further, we see that the 
Harris--Viemann conjecture for 
the moduli spaces of 
mixed characteristic local shtukas 
does not hold as it is 
in Example \ref{eg:(3,0)} and 
Remark \ref{rem:HV30}. 
We note that 
Harris--Viemann conjecture for 
the moduli spaces of 
mixed characteristic local shtukas is proved in 
\cite{GINsemi} and \cite{HanHarr} 
under the Hodge--Newton reducibility condition. 
On the other hand, 
the Hodge--Newton reducibility condition is not satisfied in 
Example \ref{eg:(3,0)}. 

In \S \ref{sec:Shell}, we collect results on 
relative homologey and the geometric Satake correspondence. 
In \S \ref{sec:Moduli}, 
we give a definition of a 
moduli space of mixed characteristic local shtukas. 
The definition which we give here is slightly different 
from that in \cite{ScWeBLp}. 
Our definition is suitable 
to construct convolution morphisms between 
moduli spaces of mixed characteristic local shtukas 
in \S \ref{sec:conv}. 
In \S \ref{sec:Twi}, we construct a 
twist morphism between 
moduli spaces of mixed characteristic local shtukas, 
which has an origin in 
the twist of a vector bundle by a line bundle.  
In \S \ref{sec:coh}, 
we discuss a relation between cohomology of different 
moduli spaces of mixed characteristic local shtukas 
using convolution morphisms. 
In \S \ref{sec:Dual}, 
we construct a duality morphism, 
which has an origin in 
the dual of a vector bundle. 
In \S \ref{sec:Kot}, 
we give an application to the Kottwitz conjecture. 
In \S \ref{sec:ind}, 
we give some inductive formulas on cohomology and 
discuss more about the Kottwitz conjecture 
in non-minuscule cases. 

After we put a former version of this paper on arXiv, 
a preprint \cite{HansclocSh} by Hansen 
appeared, where 
a cohomology version of 
Theorem \ref{thmi:GL3} is proved for 
cuspidal local L-parameters of $\GL_n$ 
using a result in \cite{AnLBAvGLn}. 
A merit of Theorem \ref{thmi:GL3} is that 
it works for discrete local L-parameters. 

\subsection*{Acknowledgements}
The author is grateful to Peter Scholze for 
answering questions. 
He would like to thank David Hansen for helpful comments. 
He thank also 
Teruhisa Koshikawa for helpful comments and various discussions. 
This work was supported by JSPS KAKENHI Grant Number 18H01109. 

\subsection*{Notation}
For a field $F$, let $\Gamma_F$ 
denote the absolute Galois group of $F$. 
For a non-archimedean local field $F$, let 
$\breve{F}$ denote the 
completion of the maximal unramified 
extension of $F$. 
For an object $X_Y$ over an object $Y$, 
its base change by the morphism $Y' \to Y$ 
is denoted by $X_{Y'}$. 

\section{Sheaves in $\ell$-adic coefficients}\label{sec:Shell}
\subsection{Relative homology}\label{ssec:Relh}
Let $p$ be a prime number. 
Let $\Lambda$ be a solid $\wh{\bZ}^p$-algebra. 
For a small v-stack $X$, 
we define $D_{\blacksquare}(X,\Lambda)$ as 
\cite[Definition VII.1.17]{FaScGeomLLC}. 
There is a symmetric monoidal structure 
$- \stackrel{\blacksquare}{\otimes}\!\!{}^{\bL}_{\Lambda} -$ on 
$D_{\blacksquare}(X,\Lambda)$ constructed by 
\cite[Proposition VII.2.2]{FaScGeomLLC}. 
In the sequel, we simply write 
$\otimes^{\bL}_{\Lambda}$ for 
$\stackrel{\blacksquare}{\otimes}\!\!{}^{\bL}_{\Lambda}$. 
For a morphism $f \colon X \to Y$ of small v-stacks, 
let 
\[
 f_{\natural} \colon 
 D_{\blacksquare}(X,\Lambda) \to D_{\blacksquare}(Y,\Lambda) 
\] 
be a left adjoint to 
$f^* \colon 
 D_{\blacksquare}(Y,\Lambda) \to D_{\blacksquare}(X,\Lambda)$ 
constructed by 
\cite[Proposition VII.3.1]{FaScGeomLLC}. 

The following lemma is already known (\cf the proof of \cite[Proposition VII.6.3]{FaScGeomLLC}). 

\begin{lem}\label{lem:limtor}
Let $f \colon X \to Y$ be a quasi-compact, quasi-separated morphism 
of small v-stacks. 
Assume that $\Lambda =\varprojlim_{n \in I} \bZ /n\bZ$, 
where $I$ is a filtered set of positive integers which are prime to $p$. 
Then we have 
\[
 f_{\natural} \Lambda \simeq 
 \varprojlim_{n \in I} f_{\natural} (\bZ/n\bZ) . 
\]
\end{lem}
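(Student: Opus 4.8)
The plan is to reduce the statement to the known compatibility of $f_\natural$ with filtered colimits of sheaves and then dualize. First I would recall from \cite[Proposition VII.3.1]{FaScGeomLLC} that for a quasi-compact, quasi-separated morphism $f$, the functor $f_\natural$ is defined and commutes with all colimits (being a left adjoint), and moreover on the relevant subcategories it is computed as a colimit of the analogous functors after truncation. The coefficient ring $\Lambda = \varprojlim_{n\in I}\bZ/n\bZ$ is a solid $\wh\bZ^p$-algebra whose underlying solid abelian group is the derived limit of the discrete rings $\bZ/n\bZ$; the point is that in $D_\blacksquare(X,\Lambda)$ the constant sheaf $\Lambda$ is the derived limit $\varprojlim_{n\in I}\bZ/n\bZ$ of the constant sheaves $\bZ/n\bZ$, where now these are regarded as $\Lambda$-modules via the projection $\Lambda\to\bZ/n\bZ$.

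Next I would express the derived limit as a (homotopy) limit and use that $f_\natural$, while not a priori commuting with limits, does commute with this particular one because of a cofinality/pro-system argument: the maps $\bZ/mn\bZ \to \bZ/n\bZ$ are surjective with kernel and cokernel that are again finite groups of order prime to $p$, so the pro-system $(\bZ/n\bZ)_{n\in I}$ is Mittag--Leffler, and the derived limit agrees with the underived one. In the solid world one works with the explicit model for $\Lambda$ as $\bigl(\varprojlim_n \bZ/n\bZ\bigr)$ presented as a complex $\prod_n \bZ/n\bZ \to \prod_n \bZ/n\bZ$; applying $f_\natural$ termwise and using that $f_\natural$ commutes with the products appearing here (these are products indexed by $I$, which in the solid formalism are limits that $f_\natural$ respects after the quasi-compactness reduction, exactly as in the cited proof of \cite[Proposition VII.6.3]{FaScGeomLLC}) gives $f_\natural \Lambda \simeq \varprojlim_{n\in I} f_\natural(\bZ/n\bZ)$.

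The main obstacle is precisely this last interchange: $f_\natural$ is a left adjoint and thus commutes with colimits, not limits, so one cannot naively pull the $\varprojlim$ out. The resolution is to use the fact, established in the proof of \cite[Proposition VII.6.3]{FaScGeomLLC}, that under the quasi-compact quasi-separated hypothesis $f_\natural$ of a bounded-below complex of (derived) $n$-torsion sheaves is computed on each $\bZ/n\bZ$-linear piece and then reassembled, so that the limit over $I$ — which is a limit of a tower of torsion coefficients with Mittag--Leffler transition maps — is preserved. Concretely I would write $f_\natural\Lambda = f_\natural \varprojlim_n \bZ/n\bZ$, use the completeness of $D_\blacksquare(X,\Lambda)$ along the tower $\{\bZ/n\bZ\}$ together with the boundedness of cohomological dimension of $f$ in the torsion setting (which makes the tower of $f_\natural(\bZ/n\bZ)$ also Mittag--Leffler), and conclude. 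The remaining verifications — that the constant sheaf $\Lambda$ really is the tower limit, and that the transition maps are as claimed — are routine given the definitions, so I would state them without detailed computation.
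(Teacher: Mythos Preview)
Your proposal correctly identifies the obstacle --- $f_\natural$ is a left adjoint and there is no formal reason it should commute with the inverse limit --- but the resolution you sketch does not actually close the gap. Saying that the tower $(\bZ/n\bZ)_{n\in I}$ is Mittag--Leffler, or that $f$ has bounded cohomological dimension on torsion coefficients, controls only the \emph{target} tower $(f_\natural(\bZ/n\bZ))_n$; it does not by itself produce a comparison map from $f_\natural(\varprojlim_n \bZ/n\bZ)$ to $\varprojlim_n f_\natural(\bZ/n\bZ)$ that you can show is an isomorphism. Likewise, rewriting $\Lambda$ as a two-term complex $\prod_n\bZ/n\bZ \to \prod_n\bZ/n\bZ$ just relocates the problem: you now need $f_\natural$ to commute with an infinite product, which is again a limit, and your appeal to ``the quasi-compactness reduction, exactly as in the cited proof of \cite[Proposition VII.6.3]{FaScGeomLLC}'' is circular, since that is precisely the statement under discussion.

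The missing idea is \emph{pseudo-coherence}. One reduces to $Y$ a spatial diamond; then $\Lambda$ is a pseudo-coherent object on $X$ (this uses the qcqs hypothesis), and $f_\natural$ preserves pseudo-coherent objects. Hence $f_\natural\Lambda$ is pseudo-coherent on $Y$, so each of its cohomology sheaves is a finitely presented solid sheaf, and by \cite[Theorem VII.1.3]{FaScGeomLLC} such objects are derived complete along the tower. This gives
\[
 f_\natural\Lambda \;\simeq\; \varprojlim_{n\in I}\bigl(f_\natural\Lambda \otimes^{\bL}_\Lambda \bZ/n\bZ\bigr)
 \;\simeq\; \varprojlim_{n\in I} f_\natural(\bZ/n\bZ),
\]
the last step being the projection formula. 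In other words, one never argues that $f_\natural$ commutes with the limit; one argues that the \emph{output} $f_\natural\Lambda$ is already derived complete, and then identifies its completion termwise.
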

\begin{proof}
We recall a proof. 
We may assume that $Y$ is a spatial diamond. 
Then $\Lambda$ is a pseudo-coherent object on $X$ by the assumption on $f$. 
Since $f_{\natural}$ preserves pseudo-coherent objects, 
$f_{\natural} \Lambda$ is also a pseudo-coherent object. 
Since each cohomology sheaf of 
$f_{\natural} \Lambda$ is a finitely presented solid sheaf, 
we have 
\[
 f_{\natural} \Lambda \simeq 
 \varprojlim_{n \in I} 
 (f_{\natural} \Lambda \otimes^{\bL}_{\Lambda} \bZ/n\bZ ) 
 \simeq \varprojlim_{n \in I} f_{\natural} (\bZ/n\bZ) 
\]
by \cite[Theorem VII.1.3, Proposition VII.3.1]{FaScGeomLLC}. 
\end{proof}

\begin{lem}\label{lem:uniop}
Let $f \colon X \to Y$ be a morphism 
of small v-stacks. 
Let $\cF$ be a solid $\wh{\bZ}^p$-sheaf on $X$. 
Let $\{ U_i \}_{i \in I}$ 
be a filtered direct system of quasi-compact open substacks of $X$ such that $X=\bigcup_{i \in I} U_i$. 
Let $f_i$ and $\cF_i$ be the restriction to $U_i$ of $f$ and $\cF$ 
for $i \in I$. 
Then we have 
\[
 f_{\natural} \cF \simeq \varinjlim_{i \in I} {f_i}_{\natural} \cF_i . 
\]
\end{lem}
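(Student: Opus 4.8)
The plan is to reduce the statement to the defining construction of $f_{\natural}$ in \cite[Proposition VII.3.1]{FaScGeomLLC} and to the fact that $j_{\natural}$ for an open immersion $j$ is extension by zero, which commutes with filtered colimits. First I would fix notation: let $j_i \colon U_i \hookrightarrow X$ denote the open immersion, and let $\iota_{i'i}\colon U_i \hookrightarrow U_{i'}$ be the transition maps for $i \le i'$ in $I$. Since $\cF$ is a solid $\wh{\bZ}^p$-sheaf, it lies in $D_{\blacksquare}(X,\Lambda)$ for the relevant $\Lambda$, and $f_i = f \circ j_i$, so by functoriality of the left adjoints (compatibility of $(-)_{\natural}$ with composition, which is part of \cite[Proposition VII.3.1]{FaScGeomLLC}) we have ${f_i}_{\natural}\cF_i \simeq f_{\natural} (j_i)_{\natural} j_i^* \cF$. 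Hence it suffices to prove
\[
 \cF \simeq \varinjlim_{i \in I} (j_i)_{\natural} j_i^* \cF
\]
in $D_{\blacksquare}(X,\Lambda)$, and then apply $f_{\natural}$, which commutes with all colimits as a left adjoint.

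For the displayed identity, the key point is that for an open immersion $j_i$, the functor $(j_i)_{\natural}$ is the usual extension-by-zero $j_{i!}$ on the category of solid sheaves, i.e.\ it is left adjoint to $j_i^*$ and, being computed by the naive extension by zero presheaf-theoretically and then sheafifying, it is exact and commutes with arbitrary colimits. I would then observe that the canonical adjunction counits $(j_i)_{\natural} j_i^* \cF \to \cF$ are compatible with the transition maps $(j_i)_{\natural} j_i^*\cF = (j_{i'})_{\natural}(\iota_{i'i})_{\natural}(\iota_{i'i})^* j_{i'}^* \cF \to (j_{i'})_{\natural} j_{i'}^* \cF$, so they assemble into a map $\varinjlim_i (j_i)_{\natural} j_i^* \cF \to \cF$. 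To check this map is an isomorphism, it suffices to check it on stalks at all geometric points $x \to X$ (or, more precisely, after pulling back along all $\Spa(C,C^+)\to X$, using that $D_{\blacksquare}$ satisfies v-descent and that colimits and the relevant pullbacks commute): the stalk of $(j_i)_{\natural} j_i^*\cF$ at $x$ is $\cF_x$ if $x$ factors through $U_i$ and $0$ otherwise, and since $\{U_i\}$ is a filtered direct system of opens, the colimit over $i$ of these stalks is exactly $\cF_x$. Here I should be slightly careful that the relevant notion of ``stalk'' detects isomorphisms in $D_{\blacksquare}(X,\Lambda)$; alternatively I can phrase the local check as pullback along a v-cover by disjoint unions of points $\Spa(C,C^+)$, where $j_i^*$ and extension by zero are transparent, and invoke that $v$-descent together with the fact that $\otimes^{\bL}_\Lambda$ and colimits commute (\cite[Theorem VII.1.3]{FaScGeomLLC}) reduces the statement to that cover.

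The main obstacle I anticipate is not the geometric idea — which is simply ``$\cF$ is the filtered colimit of its extensions by zero from an increasing family of opens, and $f_{\natural}$ commutes with colimits'' — but rather making the formal manipulations rigorous in the solid setting: namely, confirming that $(j_i)_{\natural}$ genuinely coincides with extension by zero and is exact for open immersions of small v-stacks (this is implicit in \cite[Proposition VII.3.1]{FaScGeomLLC} together with the base change properties there), and that the local criterion for an isomorphism in $D_{\blacksquare}(X,\Lambda)$ via pullback to geometric points is available at the generality of an arbitrary morphism $f$ with no finiteness hypotheses, in contrast to Lemma \ref{lem:limtor} where quasi-compactness and pseudo-coherence were used. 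If the pointwise criterion is delicate, the fallback is to verify the identity $\cF \simeq \varinjlim_i (j_i)_{\natural} j_i^*\cF$ after applying $g^*$ for $g\colon \Spa(C,C^+)\to X$ ranging over a v-cover, reducing to the case $X = \Spa(C,C^+)$ where $X$ itself is quasi-compact, the $U_i$ are quasi-compact opens, and the statement is elementary; then conclude by v-descent for $D_{\blacksquare}$.
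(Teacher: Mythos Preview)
Your reduction to the identity $\cF \simeq \varinjlim_{i} (j_i)_{\natural} j_i^*\cF$ via $f_i = f\circ j_i$ and the fact that $f_{\natural}$ is a left adjoint is exactly the paper's first step. The divergence is in how that identity is proved. The paper does not argue pointwise: it first uses the projection formula $(j_i)_{\natural} j_i^*\cF \simeq (j_i)_{\natural}\wh{\bZ}^p \otimes^{\bL}\cF$ to reduce to $\cF=\wh{\bZ}^p$, and then checks the map $\varinjlim_i (j_i)_{\natural}\wh{\bZ}^p \to \wh{\bZ}^p$ by Yoneda, computing $\Hom\bigl(\varinjlim_i (j_i)_{\natural}\wh{\bZ}^p,\cG\bigr)\simeq \varprojlim_i \cG(U_i)\simeq \cG(X)$ for every solid $\cG$, using only the defining adjunction of $(j_i)_{\natural}$ and the sheaf condition on $X$. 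This is a two-line argument that sidesteps precisely the foundational worries you raise about detecting isomorphisms in $D_{\blacksquare}$ via stalks or v-covers; it never needs to identify $(j_i)_{\natural}$ with an explicit extension-by-zero or to invoke base change. Your stalk/v-descent approach is not wrong, and your fallback via pullback to $\Spa(C,C^+)$ together with base change for $(j_i)_{\natural}$ (\cite[Proposition VII.3.1 (iii)]{FaScGeomLLC}) would go through, but the paper's Yoneda argument is strictly simpler and uses less of the machinery.
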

\begin{proof}
Let $j_i \colon U_i \to X$ be the inclusion for $i \in I$. 
Since $f_{\natural}$ commutes with 
a direct limit, it suffices to show 
$\cF \simeq \varinjlim_{i \in I} {j_i}_{\natural} \cF_i$. 
By the projection formula, we may assume that 
$\cF =\wh{\bZ}^p$. 
For any solid $\wh{\bZ}^p$-sheaf $\cG$ on $X$, 
we have 
\begin{align*}
 \Hom (\varinjlim_{i \in I} {j_i}_{\natural} \wh{\bZ}^p,\cG) &\simeq 
 \varprojlim_{i \in I} \Hom ( {j_i}_{\natural} \wh{\bZ}^p, \cG ) \simeq 
 \varprojlim_{i \in I} \cG (U_i) \simeq \cG (X) \simeq 
 \Hom (\wh{\bZ}^p,\cG). 
\end{align*}
Hence we obtain the claim. 
\end{proof}

\begin{lem}\label{lem:cohAD}
Let $F$ be a non-archimedean field with residue characteristic $p$. 
Let $d$ be a positive integer. 
\begin{enumerate}
\item\label{en:ballcont} 
Let 
\[
 f \colon \bigl( \Spa (\cO_F [[x_1^{1/p^{\infty}},\ldots ,x_d^{1/p^{\infty}}]]) \times_{\Spa (\cO_F)} \Spa (F) \bigr)^{\diamond} \to \Spa (F)^{\diamond} 
\]
be the natural morphism. 
Then we have 
$f_{\natural} \Lambda \simeq \Lambda$. 
Further, the geometric Frobenius morphism 
$x_i \mapsto x_i^p$ induces the multiplication by 
$p^d$ on $f_{\natural} \Lambda$. 
\item\label{en:Adcont} 
Let 
\[
 f \colon (\bA_F^d)^{\diamond} \to \Spa (F)^{\diamond} 
\]
be the natural morphism. 
Then we have 
$f_{\natural} \Lambda \simeq \Lambda$. 
Further, the geometric Frobenius morphism 
$x_i \mapsto x_i^p$ induces the multiplication by 
$p^d$ on $f_{\natural} \Lambda$. 
\end{enumerate}
\end{lem}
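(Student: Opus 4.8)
The plan is to reduce both parts to a pro-étale (v-sheaf) computation of the cohomology of a perfectoid ball and of affine space, which is then identified with the cohomology of a rigid-analytic ball; the Frobenius statement then follows from functoriality. First I would handle part \eqref{en:ballcont}. Write $R = \cO_F[[x_1^{1/p^\infty}, \ldots, x_d^{1/p^\infty}]]$ and $X = (\Spa(R) \times_{\Spa(\cO_F)} \Spa(F))^\diamond$. By compatibility of $f_\natural$ with the decomposition $\Lambda = \varprojlim_{n} \bZ/n\bZ$ from Lemma \ref{lem:limtor}, and since $f$ is quasi-compact and quasi-separated, it suffices to prove the statement after tensoring with $\bZ/n\bZ$ for $n$ prime to $p$; so I may work with torsion coefficients, where $f_\natural$ agrees with $f_!$ up to the expected twist and shift by Poincaré duality for smooth maps (the relevant diamond is smooth of dimension $d$, being the diamond of a smooth rigid space over $\Spa(F)$, namely an open perfectoid unit polydisc). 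Concretely, $\Spa(R[1/p])$ is the perfectoid cover of the rigid unit polydisc $\bD_F^d$ obtained by adjoining all $p$-power roots of the coordinates, and by the tilting/almost purity machinery its étale (hence pro-étale) cohomology agrees with that of $\bD_F^d$. The étale cohomology with compact supports of the rigid unit ball $\bD_F^1$ over an algebraically closed nonarchimedean field is $\bZ/n\bZ(-1)[-2]$ (concentrated in degree $2$), and for the polydisc one takes the $d$-fold exterior tensor power, giving $\bZ/n\bZ(-d)[-2d]$; passing back to $\Lambda$ via the inverse limit gives $f_\natural \Lambda \simeq \Lambda(-d)[-2d]$.

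For the Frobenius assertion in part \eqref{en:ballcont}, the morphism $\psi \colon x_i \mapsto x_i^p$ is a finite flat self-map of $X$ over $\Spa(F)$ (it is the relative Frobenius of the perfectoid ball on each coordinate factor), so it induces an endomorphism of $f_\natural \Lambda \simeq \Lambda(-d)[-2d]$, i.e. multiplication by a scalar in $\Lambda$. To pin the scalar down, one may work one coordinate at a time and reduce to $d = 1$: on $H^2_c$ of the unit disc, the map $z \mapsto z^p$ acts by its degree, which is $p$; more precisely it acts on the top compactly-supported cohomology class dual to the fundamental class by multiplication by the local degree $p$. Taking the $d$-fold tensor power multiplies these, giving $p^d$, which is the claimed normalization of the geometric Frobenius on $f_\natural \Lambda$. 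Alternatively, and perhaps more cleanly, one can identify the class in $H^2_c(\bD^1, \Lambda(1)) \simeq \Lambda$ with (a component of) the class of $\cO(1)$ or use the trace map for the finite flat map $\psi$, whose composite $\psi_* \psi^* = \times \deg\psi = \times p$ on each factor; dualizing gives the action on $f_\natural$.

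Part \eqref{en:Adcont} I would deduce from \eqref{en:ballcont} by an exhaustion/limit argument. Write $(\bA_F^d)^\diamond$ as the increasing union of the diamonds of closed polydiscs of radius $p^m$, $m \to \infty$, each of which is isomorphic (after rescaling coordinates) to the diamond in \eqref{en:ballcont} — or one first passes to the perfectoid cover adjoining $p$-power roots of coordinates, which does not change $f_\natural$ up to the identification already established since the cover is a quasi-pro-étale torsor and absolute, so cohomologically transparent. By Lemma \ref{lem:uniop}, $f_\natural \Lambda \simeq \varinjlim_m (f_m)_\natural \Lambda$ where $f_m$ is the restriction to the radius-$p^m$ polydisc; each term is $\Lambda(-d)[-2d]$ by \eqref{en:ballcont}, and the transition maps in the system are isomorphisms (the inclusion of one polydisc into a larger one induces an isomorphism on top compactly-supported cohomology, being the restriction map along an open immersion that is cohomologically an isomorphism for the constant sheaf on these contractible-type spaces — equivalently, the complement is lower-dimensional). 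Hence the colimit is again $\Lambda(-d)[-2d]$, and the Frobenius $x_i \mapsto x_i^p$ is compatible with the system, so it acts by $p^d$ by \eqref{en:ballcont}. The main obstacle I anticipate is making the normalization of the geometric Frobenius action genuinely precise — that is, checking the scalar is exactly $p^d$ and not, say, $p^{-d}$ or $1$ — which requires care with the conventions for $f_\natural$ versus $f_!$, for the Tate twist, and for which Frobenius (arithmetic vs. geometric) is meant; the cleanest route is to reduce to $d=1$ and use the elementary fact that $z \mapsto z^p$ has degree $p$ together with the trace/pushforward formalism for finite flat maps.
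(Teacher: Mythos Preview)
Your overall strategy is right and close to the paper's, but there is a genuine gap in your treatment of part~\ref{en:ballcont}: you invoke Lemma~\ref{lem:limtor} for $f$ directly, asserting that ``$f$ is quasi-compact and quasi-separated.'' It is not. The adic generic fiber $\Spa(\cO_F[[x_1^{1/p^\infty},\ldots,x_d^{1/p^\infty}]])\times_{\Spa\cO_F}\Spa F$ is an \emph{open} perfectoid polydisc, which fails to be quasi-compact (already the open unit disc is the increasing union of closed discs of shrinking radii and is not qc). So Lemma~\ref{lem:limtor} does not apply to $f$ as stated, and you cannot pass to torsion coefficients at this stage.

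The paper fixes exactly this by reversing the order of your two reductions: first use Lemma~\ref{lem:uniop} to write the open perfectoid polydisc as a filtered union of \emph{closed} perfectoid polydiscs $\Spa(F\langle x_1^{1/p^\infty},\ldots,x_d^{1/p^\infty}\rangle)$, which are affinoid and hence qcqs; then apply Lemma~\ref{lem:limtor} to the restriction $g$ to each closed polydisc; then use \cite[Proposition~VII.5.2]{FaScGeomLLC} to identify $g_\natural$ with $g_!$ for torsion coefficients and cite the standard computation $g_!(\bZ/n\bZ)\simeq(\bZ/n\bZ)(-d)[-2d]$. Note also that the identification $g_\natural=g_!$ here is on the nose, not ``up to a twist and shift by Poincar\'e duality'' as you wrote; the twist and shift arise from computing $g_!\Lambda$, not from comparing $g_\natural$ to $g_!$. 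Your exhaustion argument for part~\ref{en:Adcont} and your degree argument for the Frobenius action are fine and in line with the paper's ``similarly.''
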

\begin{proof}
We show the first claim of \ref{en:ballcont}. 
We may assume that 
$\Lambda=\wh{\bZ}^p$ and 
$F$ is algebraically closed of characteristic $p$. 
We write $\Spa (\cO_F [[x_1^{1/p^{\infty}},\ldots ,x_d^{1/p^{\infty}}]]) \times_{\Spa (\cO_F)} \Spa (F)$ 
as a union of affinoids isomorphic to 
$\Spa (F \langle x_1^{1/p^{\infty}},\ldots ,x_d^{1/p^{\infty}} \rangle)$. 
By Lemma \ref{lem:uniop}, 
it is reduced to show that 
$g_{\natural} \wh{\bZ}^p \simeq \wh{\bZ}^p$ for 
\[
 g \colon \Spa (F \langle x_1^{1/p^{\infty}},\ldots ,x_d^{1/p^{\infty}} \rangle  )
 \to \Spa (F). 
\]
By Lemma \ref{lem:limtor} and 
\cite[Proposition VII.5.2]{FaScGeomLLC}, 
the claim follows from that 
$g_!(\bZ/n\bZ)\simeq (\bZ/n\bZ) (-d)[-2d]$ for any integer $n$ prime to $p$. 
The claim on the geometric Frobenius morphism follows 
from the case for $g_!(\bZ/n\bZ)$. 

We can show the claim \ref{en:Adcont} similarly. 
\end{proof}

Let $\ell$ be a prime number different from $p$. 

\begin{lem}\label{lem:torhom}
Let $G$ be a locally pro-$p$ group. 
Let $\cH(G)$ be the Hecke algebra of $G$ 
with coefficients in $\Lambda$. 
Let $f \colon X \to Y$ be a morphism of small v-stacks 
which is a $G$-torsor. 
For a pro-$p$ open subgroup $K$ of $G$, 
let $f_K \colon X/K \to Y$ be the morphism induced by $f$.  
Let $g \colon Y \to Z$ be a morphism of small v-stacks. 
The morphisms $f_{K}^*$ and $(g \circ f_K)_{\natural}$ induce 
\begin{equation*}
\varinjlim_{K} (g \circ f_K)_{\natural} f_{K}^* \colon D_{\blacksquare}(Y,\Lambda) \to D_{\blacksquare}(Z,\cH(G))
\end{equation*}
\begin{enumerate}
\item\label{en:ffAA}
For $A \in D_{\blacksquare}(Y,\Lambda)$, we have 
\[
  ( \varinjlim_{K} f_{K,\natural} f_K^* A ) \otimes_{\cH(G)}^{\bL} \Lambda \cong A. 
\]
\item\label{en:fVH} 
Assume that $A \in D_{\blacksquare}(Y,\Lambda)$ is obtained from $V \in D^{\mathrm{b}}(G,\Lambda)$. 
Then we have 
\[
\bigl( \varinjlim_{K}(g \circ f_K)_{\natural} (\Lambda) \otimes_{\Lambda} V \bigr) \otimes_{\cH(G)}^{\bL} \Lambda \simeq 
g_{\natural} A . 
\]
\end{enumerate}
\end{lem}
\begin{proof}
\ref{en:ffAA} 
We have 
\begin{align*}
	 ( \varinjlim_{K} f_{K,\natural} f_K^* A ) \otimes_{\cH(G)}^{\bL} \Lambda  
	\cong  
	 ((\varinjlim_{K} f_{K,\natural} \Lambda) \otimes_{\Lambda}^{\bL} A ) \otimes_{\cH(G)}^{\bL} \Lambda 
	\cong 
	 ((\varinjlim_{K} f_{K,\natural} \Lambda) \otimes_{\cH(G)}^{\bL} \Lambda) \otimes_{\Lambda}^{\bL} A . 
\end{align*}
Hence it suffices to show that the natural morphism 
$(\varinjlim_{K} f_{K,\natural} \Lambda) \otimes_{\cH(G)}^{\bL} \Lambda \to \Lambda$ 
is an isomorphism. We can check this v-locally on $Y$ 
by \cite[Proposition VII.3.1 (iii)]{FaScGeomLLC}. 
Hence the claim follows. \\
\ref{en:fVH} 
The morphism $g_{\natural}$ induces  
\[
 g_{\natural} \colon D_{\blacksquare}(Y,\cH(G)) 
 \to D_{\blacksquare}(Z,\cH(G)). 
\]
By \cite[Proposition VII.3.1 (i)]{FaScGeomLLC}, 
we have 
\begin{align*}
\bigl( \varinjlim_{K}(g \circ f_K)_{\natural} (\Lambda) & \otimes_{\Lambda} V \bigr) \otimes_{\cH(G)}^{\bL} \Lambda \cong 
g_{\natural} \bigl( \varinjlim_{K}f_{K,{\natural}} (\Lambda) \otimes_{\Lambda} V \bigr) \otimes_{\cH(G)}^{\bL} \Lambda \\ 
&\cong 
g_{\natural} \Bigl( \bigl( \varinjlim_{K}f_{K,{\natural}} (\Lambda) \otimes_{\Lambda} V \bigr) \otimes_{\cH(G)}^{\bL} \Lambda \Bigr) 
\cong g_{\natural} \Bigl( \bigl( \varinjlim_{K}f_{K,{\natural}} (V)  \bigr) \otimes_{\cH(G)}^{\bL} \Lambda \Bigr) . 
\end{align*}
Combined with \ref{en:ffAA}, it remains to show 
\[
 \varinjlim_{K}f_{K,{\natural}} (V) \cong 
 \varinjlim_{K} f_{K,\natural} f_K^* A . 
\]
We can check that the morphism 
\[
 \varinjlim_{K}f_{K,{\natural}} (V) \to 
\varinjlim_{K} f_{K,\natural} f_K^* A 
\]
induced by $V \twoheadrightarrow V^K \hookrightarrow f_K^* A$ is an isomorphism. 
\end{proof}

Let $\Lambda$ be a $\bZ_{\ell}$-algebra. 
For an Artin v-stack $X$, let 
$D_{\lis}(X,\Lambda))$ be the category defined in 
\cite[Definition VII.6.1]{FaScGeomLLC}. 

\begin{lem}\label{lem:cslis}
Let $f \colon X \to Y$ be an $\ell$-cohomologically smooth morphism. 
\begin{enumerate}
\item\label{en:lispres} 
We have $f_{\natural} ( D_{\lis}(X,\Lambda)) \subset D_{\lis}(Y,\Lambda)$. 
\item\label{en:pullis}
For $A \in D_{\blacksquare}(Y,\Lambda)$, we have 
$(f^*A)^{\lis} \cong f^*(A^{\lis})$. 
\end{enumerate}
\end{lem}
\begin{proof}
The claim \ref{en:lispres} follows from \cite[Definition VII.6.1]{FaScGeomLLC}. For $B \in D_{\blacksquare}(Y,\Lambda)$, we have 
\begin{align*}
 \Hom (B,(f^*A)^{\lis}) &\cong \Hom (B,f^*A) \cong \Hom (f_{\natural}(B),A) \\ 
 &\cong  
 \Hom (f_{\natural}(B),A^{\lis}) \cong \Hom (B,f^*(A^{\lis})), 
\end{align*}
where we use \ref{en:lispres} at the third isomorphism. 
Hence the claim \ref{en:pullis} follows. 
\end{proof}

\begin{lem}\label{lem:lisbc}
	Let  
	\[
	\xymatrix{
		X' 
		\ar[r]^-{f'} \ar[d]^-{g'} & 
		Y' \ar[d]^-{g} \\ 
		X \ar[r]^-{f} & Y 
	}
	\]
	be a cartesian diagram of small v-stack. 
	Assume that $g$ is $\ell$-cohomologically smooth. 
	Then we have 
	\[
	g^* Rf_{\lis*} A \cong  Rf'_{\lis*} g'^* A
	\]
	for $A \in D_{\lis}(X,\Lambda)$. 
\end{lem}
\begin{proof}
	This follows from \cite[Proposition VII.2.4]{FaScGeomLLC} 
	and Lemma \ref{lem:cslis}. 
\end{proof}

\begin{lem}\label{lem:pullhomlis}
Let $f \colon X \to Y$ be an $\ell$-cohomologically smooth morphism. 
Let $A,B \in D_{\lis}(Y,\Lambda)$. 
Then we have 
$f^* R\sHom_{\lis} (A,B) \cong R\sHom_{\lis} (f^* A ,f^* B)$. 
\end{lem}
\begin{proof}
This follows from \cite[Proposition VII.2.4]{FaScGeomLLC}
and Lemma \ref{lem:cslis}. 
\end{proof}

\begin{lem}\label{lem:pushhomlis}
Let $f \colon X \to Y$ be a morphism. 
Let $A \in D_{\lis}(X,\Lambda)$ and $B \in D_{\lis}(Y,\Lambda)$. 
\begin{enumerate}
	\item\label{en:RHflis}
	We have $R\sHom_{\lis} (B,Rf_{\lis *}(A))\cong Rf_{\lis *} R\sHom_{\lis} (f^* B,A)$. 
		\item\label{en:RHfnat} 
If $f$ is $\ell$-cohomologically smooth, then 
we have 
\[
R\sHom_{\lis} (f_{\natural}(A),B)\cong Rf_{\lis *} R\sHom_{\lis} (A,f^* B). 
\]
	\end{enumerate}
\end{lem}
\begin{proof}
\ref{en:RHflis} For $C\in D_{\lis}(Y,\Lambda)$, we can check 
\begin{align*}
R\Hom (C,R\sHom_{\lis} (B,Rf_{\lis *}(A))) \cong 
R\Hom (C,Rf_{\lis *} R\sHom_{\lis} (f^* B,A))
\end{align*}
by adjoint. The claim \ref{en:RHfnat} is proved similarly. 
\end{proof}

For an $\ell$-cohomologically smooth morphism $f \colon X \to Y$, 
we put 
\[
D_f=(\varprojlim_{n} Rf^! (\bZ/\ell^n \bZ)) \otimes_{\bZ_{\ell}} \Lambda 
\] 
and 
\[
 f_! (A)=f_{\natural} (A \otimes D_f^{-1}) 
\] 
for $A \in D_{\lis}(X,\Lambda)$. 
For an $\ell$-cohomologically smooth morphism $f \colon X \to *$, 
we write $D_X$ for $D_f$. 
For $f \colon X \to *$ and $A \in D_{\lis}(X,\Lambda)$, 
we put 
$R\Gamma_{\natural}(X,A)=f_{\natural}(A)$. 
For $f \colon X \to \Spa C$ and $A \in D_{\lis}(X,\Lambda)$ where $C$ is an algebraically closed non-archimedean field of characteristic $p$, 
we put 
$R\Gamma_{\natural,C}(X,A)=f_{\natural}(A)$. 

\subsection{Geometric Satake equivalence}\label{ssec:GSat}

We recall the geometric Satake equivalence for 
$B_{\mathrm{dR}}^+$-Grassmannians by Fargues--Scholze 
(\cf \cite[VI, IX]{FaScGeomLLC}). 

Let $\bC_p$ be the completion of the algebraic closure of 
$\bQ_p$. 
Let $F$ be a finite extension of $\bQ_p$ in $\bC_p$ 
with the residue field $\bF_q$. 
For an algebraic field extension $k$ of $\bF_q$, 
let $\Perf_k$ denote the 
category of perfectoid spaces over $k$ 
with $v$-topology 
in the sense of \cite[\S 8]{SchEtdia}. 

Let $G$ be a connected reductive group over $F$. 
We define $v$-sheaves $LG$ and $L^+G$ 
over $\Spd \bQ_p$ by sending 
$S =\Spa (R,R^+) \in \Perf_{\bF_q}$ with an untilt 
$S^{\sharp}=\Spa (R^{\sharp},R^{\sharp,+})$ 
to 
$B_{\mathrm{dR}}(R^{\sharp})$ and 
$B_{\mathrm{dR}}^+(R^{\sharp})$, 
where 
$B_{\mathrm{dR}}(R^{\sharp})$ and 
$B_{\mathrm{dR}}^+(R^{\sharp})$ are defined as in 
\cite[Definition 1.32]{FarGover}. 
We put $\Gr_{G}=LG/L^+G$ and 
\[
 \cHck_G =[L^+G \backslash LG/L^+G] . 
\]
For $A_1, A_2 \in D_{\blacksquare}(\cHck_G,\Lambda)$, 
let 
$A_1 \star A_2$ denote the convolution product of 
$A_1$ and $A_2$. 
Let $Q$ be a finite quotient of $W_F$ such that 
the action of $W_F$ on $\wh{G}$ factors through $Q$. 
Let 
\[
 \cS' \colon \Rep_{\Lambda} (\wh{G} \rtimes Q) 
 \lra D_{\blacksquare}(\cHck_G,\Lambda) 
\]
denote the functor that gives the 
geometric Satake equivalence (\cf \cite[IX.2]{FaScGeomLLC}). 
This functor is symmetric monoidal functor by the construction 
(\cf \cite[Proposition VI.10.2]{FaScGeomLLC}). 

For $V_1, V_2 \in \Rep_{\Lambda} (\wh{G} \rtimes Q)$, 
let 
\[
 c_{V_1, V_2} \colon \cS' (V_1) \star \cS' (V_2) \simeq 
 \cS' (V_2) \star \cS' (V_1) 
\]
be the commutativity constraint uniquely characterized by 
\[
 \xymatrix{
 \cS' (V_1) \star \cS' (V_2) 
 \ar[rr]^-{c_{V_1, V_2}} \ar[d] & & 
 \cS' (V_2) \star \cS' (V_1) \ar[d] \\ 
 \cS' (V_1 \otimes V_2) 
 \ar[rr]^-{S'(\sigma_{V_1,V_2})} & & 
 \cS' (V_2 \otimes V_1) , 
 }
\]
where $\sigma_{V_1,V_2} \colon V_1 \otimes V_2 \to V_2 \otimes V_1$ 
is the isomorphism switching $V_1$ and $V_2$. 

Assume that $\mu \in X_*(T)^+$. 
Let $Q_{\mu} \subset Q$ be the stabilizer of $\mu$. 
Let $r'_{G,\mu}$ be the highest weight $\mu$ irreducible 
representation of $\wh{G} \rtimes Q_{\mu}$. 
We put 
\[
 r_{G,\mu}=\Ind_{\wh{G} \rtimes Q_{\mu}}^{\wh{G} \rtimes Q} r'_{G,\mu}. 
\]
We simply write $r_{\mu}$ for $r_{G,\mu}$ 
if there is no confusion. 
We write $V_{\mu}$ for the representation space of 
$r_{\mu}$. 
We put $\mathrm{IC}'_{\mu}=\cS' (V_{\mu})$. 
We use the same notation 
$\mathrm{IC}'_{\mu}$ for the pullback of 
$\mathrm{IC}'_{\mu}$ to other spaces.

\section{Moduli of local shtukas}\label{sec:Moduli}

Let $S =\Spa (R,R^+) \in \Perf_{\bF_q}$. 
We put 
$W_{\cO_F}(R^+)=W(R^+) \otimes_{W(\bF_q)} \cO_F$. 
Take an topological nilpotent unit $\varpi_R$ in $R$. 
Let $\cY_{(0,\infty)}(S)$ be the adic space 
defined by the condition 
$p \neq 0$ and $[\varpi_R] \neq 0$ 
in $\Spa (W_{\cO_F}(R^+),W_{\cO_F}(R^+))$. 
Then $\cY_{(0,\infty)}(S)$ has an action of 
the $q$-th power Frobenius element 
$\varphi_S$ induced by the $q$-th power map on $R$. 
The quotient 
\[
 X_S = \cY_{(0,\infty)}(S)/ \varphi_S^{\bZ} 
\]
is called the relative Fargues--Fontaine curve for $S$ 
(\cf \cite[Definition 15.2.6]{ScWeBLp}). 
The construction glues together to give 
$X_S$ for any $S \in \Perf_{\bF_q}$. 

We define a continuous map 
\[
 \kappa_{S} \colon \cY_{(0,\infty)}(S) 
 \lra (0,\infty) 
\]
by 
\[
 \kappa_{S} (x) = 
 \frac{\log \lvert [\varpi_R] \rvert_{\wt{x}}}{\log \lvert p \rvert_{\wt{x}}} 
\]
where $\wt{x}$ is the maximal generalization of 
$x \in \cY_{(0,\infty)}(S)$ and 
$\lvert \, \cdot \, \rvert_{\wt{x}}$ denotes the valuation corresponding to 
$\wt{x}$. 
For an interval $I$ in $(0,\infty)$, 
let 
$\cY_{I}(S)$ denote the interior of 
$\kappa_{S}^{-1}(I)$. 
For $S \in \Perf_{\bF_q}$, 
we put 
$\bB (S)=\cO (\cY_{(0,\infty)}(S))$. 
Then $\bB$ is a v-sheaf by \cite[Proposition II.2.1]{FaScGeomLLC}. 

Let $G$ be a connected reductive group over $F$. 
Let $b \in G(\breve{F})$. 
We define an algebraic group $G_b$ over $F$ by 
\[
 G_b (R)=\{ g \in G(\breve{F} \otimes_F R) \mid 
 g (b \sigma \otimes 1) = (b \sigma \otimes 1) g \} 
\]
for any $F$-algebra $R$. 
We define 
a $G$-bundle $\sE_{b,X_S}$ on $X_S$ by 
\[
 (G_{\breve{F}} \times_{\Spa (\breve{F})} \cY_{(0,\infty)}(S)) 
 / ((b\sigma) \times \varphi_S )^{\bZ}. 
\]
If $b'=g^{-1}b\sigma(g)$ for $b,b',g \in G(\breve{F})$, 
then the left multiplication by $g^{-1}$ induces 
an isomorphim 
\begin{equation}\label{eq:tg}
 t_g \colon \sE_{b,X_S} \to \sE_{b',X_S}. 
\end{equation}
We define a sheaf $\wt{J}_b$ on $\Perf_{\ol{\bF}_q}$ 
by 
\[
 \wt{J}_b (S)=\Aut (\sE_{b,X_S}) 
\] 
for $S \in \Perf_{\ol{\bF}_q}$. 
In the sequel, we simply write 
$\sE_{b}$ for $\sE_{b,X_S}$ if there is no confusion. 
We define $\wt{J}_b^{>0}$ as in \cite[III.5]{FaScGeomLLC}. 
Then we have 
$\wt{J}_b=\wt{J}_b^{>0} \rtimes \ul{G_b(F)}$ 
by \cite[Proposition III.5.1]{FaScGeomLLC}. 
If $b$ is basic, we have $\wt{J}_b =\ul{G_b(F)}$. 

Let $b , b' \in G(\breve{F})$. 
Let $\mu_1, \ldots , \mu_m$ be cocharacters of $G$. 
We put $\mu_{\bullet}=(\mu_1, \ldots , \mu_m)$. 
For $1 \leq i \leq m$, 
let $E_i$ be the field of definition of $\mu_i$. 

\begin{defn}
We define the presheaf 
$\Sht_{G,b,b'}^{\mu_{\bullet}}$ 
by sending $S =\Spa (R,R^+) \in \Perf_{\ol{\bF}_q}$ 
to the isomorphism classes of the following objects; 
\begin{itemize}
\item
an untilt $S_i^{\sharp}$ of $S$ over $\breve{E}_i$ 
for $1 \leq i \leq m$, 
\item 
a $G$-torsor $\cP$ on $\cY_{(0,\infty)}(S)$ with an isomorphism 
\[
 \varphi_{\cP} \colon (\varphi_S^* \cP )
 |_{\cY_{(0,\infty)}(S) \setminus \bigcup_{i=1}^m S_i^{\sharp}} 
 \simeq 
 \cP|_{\cY_{(0,\infty)}(S) \setminus \bigcup_{i=1}^m S_i^{\sharp}} 
\]
which is meromorphic along the Cartier divisor 
$\bigcup_{i=1}^m S_i^{\sharp} \subset \cY_{(0,\infty)}(S)$ 
and 
the relative position of 
$\varphi_S^* \cP$ and $\cP$ at $S_i^{\sharp}$ 
is bounded by 
$\sum_{j \mid S_j^{\sharp}=S_i^{\sharp}} \mu_j$ 
at all geometric rank $1$ points for all $1 \leq i \leq m$, 
\item 
an isomorphism 
\[
 \iota_{[r,\infty)} \colon \cP|_{\cY_{[r,\infty)}(S)} \simeq 
 G \times \cY_{[r,\infty)}(S) 
\]
for large enough $r$ under which 
$\varphi_{\cP}$ is identified with $b \times \varphi_S$ 
and 
an isomorphism 
\[
 \iota_{(0,r']} \colon \cP|_{\cY_{(0,r']}(S)} \simeq G \times \cY_{(0,r']}(S) 
\]
for small enough $r'$ under which 
$\varphi_{\cP}$ is identified with $b' \times \varphi_S$ 
\end{itemize}
\end{defn}

If there is no confusion, we simply write 
$\Sht_{b,b'}^{\mu_{\bullet}}$ for $\Sht_{G,b,b'}^{\mu_{\bullet}}$. 
If $\mu_{\bullet}=(\mu)$, 
we simply write 
$\Sht_{G,b,b'}^{\mu}$ for 
$\Sht_{G,b,b'}^{\mu_{\bullet}}$. 
We use similar abbreviations also for other spaces. 

We define the right action of $\wt{J}_b \times \wt{J}_{b'}$ 
on $\Sht_{G,b,b'}^{\mu_{\bullet}}$ by 
\[
 (\iota_{[r,\infty)}, \iota_{(0,r']}) \mapsto 
 (g^{-1} \circ \iota_{[r,\infty)}, g'^{-1} \circ \iota_{(0,r']}) 
\]
for $(g,g') \in \wt{J}_b \times \wt{J}_{b'}$. 

We define 
$\Gr_{G,\Spd E_1 \times \cdots \times \Spd E_m, 
 \leq \mu_{\bullet}}^{\mathrm{tw}}$ 
as in \cite[Definition 23.4.1]{ScWeBLp}. 
It is a spacial diamond by 
\cite[Proposition 23.4.2]{ScWeBLp}. 
We have a morphism 
\[
 \pi_{G,b,b'}^{\mu_{\bullet}} \colon 
 \Sht_{G,b,b'}^{\mu_{\bullet}} \to 
 \Gr_{G,\Spd \breve{E}_1 \times \cdots \times \Spd \breve{E}_m, 
 \leq \mu_{\bullet}}^{\mathrm{tw}}
\]
defined by forgetting $\iota_{(0,r']}$. 
The morphism 
$\pi_{G,b,b'}^{\mu_{\bullet}}$ 
is a $\wt{J}_{b'}$-torsor over a locally spatial subdiamond 
of 
$\Gr_{G,\Spd \breve{E}_1 \times \cdots \times \Spd \breve{E}_m, 
 \leq \mu_{\bullet}}^{\mathrm{tw}}$ 
by \cite[Proposition 11.20]{SchEtdia}. 
Hence, 
$\Sht_{G,b,b'}^{\mu_{\bullet}}$ 
is a diamond by 
\cite[Proposition 11.6]{SchEtdia} and 
\cite[2.5, 2.6.2]{FarGover}. 

We have a natural inversing morphism
\begin{equation}\label{eq:invmor}
 \Sht_{G,b,b'}^{\mu_{\bullet}} \to 
 \Sht_{G,b',b}^{\mu_{\bullet}^{-1}} 
\end{equation}
compatible with the action of $\wt{J}_b \times \wt{J}_{b'}$. 

Let $B(G)$ be the set of $\sigma$-conjugacy classes in 
$G(\breve{F})$. 
We write $B(G)_{\mathrm{bas}}$ 
for the set of the basic elements in 
$B(G)$. 
Let $\mu$ be a cocharacter of $G$. 
We define $B(G,\mu)$ as in \cite[6.2]{KotIsoII}. 

Assume that $G$ is quasi-split. 
We fix subgroups 
$A \subset T \subset B$ of $G$ 
where $A$ is a maximal split torus, 
$T$ is a maximal torus and $B$ is a Borel subgroup. 
We write 
$X_*(A)^+$ and $X_*(T)^+$ 
for the dominant cocharacter of $A$ and $T$. 
For $b \in G(\breve{F})$, 
we define $\nu_b \in X_* (A)_{\bQ}^+$ 
as in \cite[2.2.2]{FarGover} 
using the slope morphism constructed in \cite[4.2]{KotIso}. 
Let $B(G,\mu,[b])$ 
be the set of acceptable neutral elements in $B(G)$ for 
$(\mu,[b])$ (\cf \cite[Definition 4.3]{GINsemi}).

\begin{lem}\label{lem:BGbij}
Assume that $b$ is basic. 
The map 
\[
 G(\breve{F}) \to G(\breve{F}) = G_{b}(\breve{F});\ g \mapsto gb^{-1} 
\]
induces bijections 
$B(G) \to B(G_b)$, 
$B(G)_{\mathrm{bas}} \to B(G_b)_{\mathrm{bas}}$ 
and 
$B(G,\mu,[b]) \to B(G_b,\mu)$. 
\end{lem}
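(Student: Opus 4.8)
The key is that twisting by $b$ on the right is a well-understood equivalence between the Kottwitz-theoretic combinatorics of $G$ and those of $J_b$; all three bijections are instances of this, and the real content is tracking how $\sigma$-conjugacy, the slope morphism, and the acceptability condition transport. I would first recall the well-known fact (essentially Kottwitz, \cf \cite{KotIsoII}) that for basic $b$ the map $g \mapsto gb^{-1}$ is compatible with the twisted Frobenius actions: if $\sigma_b = (b\sigma)(-)(b\sigma)^{-1}$ denotes the Frobenius on $J_b(\breve F) = G(\breve F)$, then $g \mapsto gb^{-1}$ intertwines $\sigma$-conjugation on $G(\breve F)$ with $\sigma_b$-conjugation on $J_b(\breve F)$. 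Concretely, $h^{-1} g \sigma(h) \mapsto h^{-1} g \sigma(h) b^{-1} = h^{-1}(gb^{-1})(b\sigma(h)b^{-1}) = h^{-1}(gb^{-1})\sigma_b(h)$, using that $b$ is central in $J_b$ up to the twist — so the bijection $B(G) \to B(J_b)$ is immediate once one notes $J_b$ as an inner form of $G$ has its own $B(-)$ computed via $\sigma_b$ on $G(\breve F)$.

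Next I would check the bijection on basic elements. Since $b$ is basic, the inner twisting $G \rightsquigarrow J_b$ is by an element of $G^{\mathrm{ad}}(\breve F)$, so $G$ and $J_b$ have the same centre and the same $\pi_1$; the Kottwitz map $\kappa_G\colon B(G) \to \pi_1(G)_\Gamma$ corresponds to $\kappa_{J_b}$ under the above bijection up to the translation by $\kappa_G([b])$ (this is the standard "recentering" — see \cite[4.2]{KotIso}). Basic elements are exactly the fibres of $\kappa$ over which the Newton point is central, equivalently the Newton point is trivial in $X_*(A)_\bQ/X_*(Z)_\bQ$; since the slope morphisms of $G$ and $J_b$ differ precisely by subtracting $\nu_b$ (which is central, as $b$ is basic), $\nu_{gb^{-1}}^{J_b} = \nu_g^G - \nu_b$, and centrality of $\nu_g^G$ is equivalent to centrality of $\nu_{gb^{-1}}^{J_b}$. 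Hence $B(G)_{\mathrm{bas}} \to B(J_b)_{\mathrm{bas}}$.

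Finally, for $B(G,\mu,[b]) \to B(J_b,\mu)$: unravelling \cite[Definition 4.3]{GINsemi}, an element $[b'] \in B(G)$ lies in $B(G,\mu,[b])$ iff it is neutral (basic-compatible with $[b]$ under $\kappa$, i.e.\ $\kappa([b']) = \kappa([b])$, so that $J_{b'}$ is an inner form over $F$, or in the relevant formulation $[b'] \in B(G)$ with $\kappa_G([b']) = \mu^\natural$) and the acceptability inequality $\nu_{b} - \nu_{b'} \leq$ (image of $\mu$) holds in the appropriate cone. Under $g \mapsto gb^{-1}$ the condition $\kappa_G([b']) = \kappa_G([b])$ becomes $\kappa_{J_b}([b'b^{-1}]) = 0$, i.e.\ neutrality for $J_b$; and since both Newton points shift by the same central $\nu_b$, the difference $\nu_b - \nu_{b'}$ is unchanged, so the acceptability inequality for $(\mu,[b])$ in $G$ becomes exactly the acceptability inequality defining $B(J_b,\mu)$ (where the base point is now the basic element, whose Newton point is $0$). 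One must check that $\mu$, viewed as a cocharacter of $J_b$ via the canonical identification of cocharacter lattices, has the same image $\mu^\natural$ in $\pi_1$ and the same dominant representative — this is automatic since the inner twist is by $G^{\mathrm{ad}}(\breve F)$ and $A, T$ match up.

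\textbf{Main obstacle.} The only genuinely delicate point is bookkeeping with base points and signs: the acceptability condition in \cite{GINsemi} is phrased relative to $[b]$, whereas $B(J_b,\mu)$ is phrased relative to the basic class, and one must verify these match after the shift by $\nu_b$ — in particular that "neutral/acceptable" is preserved in both directions and that the partial order on Newton points is respected by the translation (it is, since translation by a central element is an order isomorphism on $X_*(A)_\bQ^+$ relative to the dominance order, as the coroots are unchanged). Everything else is a direct, if slightly tedious, transport of definitions through the equivalence $g \mapsto gb^{-1}$.
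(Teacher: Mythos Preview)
Your approach is correct and is essentially the same as the paper's: the core identity you write, $h^{-1} g \sigma(h) b^{-1} = h^{-1}(gb^{-1})\sigma_b(h)$, is exactly the equality $(g' g \sigma(g')^{-1}) b^{-1} = g' (g b^{-1}) (b \sigma(g') b^{-1})^{-1}$ that constitutes the paper's entire proof. The paper stops there and leaves the basic and $B(G,\mu,[b])$ cases to the reader, whereas you spell out the transport of Newton points, the Kottwitz map, and the acceptability condition; this extra bookkeeping is precisely what is needed to make the terse argument complete, and your handling of it (shift by the central $\nu_b$, translation of $\kappa$ by $\kappa_G([b])$) is the standard and correct one.
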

\begin{proof}
The claim follows from the equality 
\[
 (g' g \sigma(g')^{-1}) b^{-1} = 
 g' (g b^{-1}) (b \sigma(g') b^{-1})^{-1} . 
\]
for $g,g' \in G(\breve{F})$. 
\end{proof}

\begin{prop}\label{prop:Mtriv}
Assume that $b'$ is basic. 
We have a natural isomorphism 
\[
 \Sht_{G,b,b'}^{\mu_{\bullet}} \stackrel{\sim}{\lra} 
 \Sht_{G_{b'},bb'^{-1},1}^{\mu_{\bullet}} 
\]
which is compatible with the 
action of $\wt{J}_{b} \times \wt{J}_{b'}$. 
\end{prop}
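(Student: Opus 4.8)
The plan is to realize the isomorphism by \emph{twisting the associated $G$-bundle on the relative Fargues--Fontaine curve by the bundle $\sE_{b'}$}; because $b'$ is basic this converts $G$-shtukas framed by $b'$ at $0$ into $J_{b'}$-shtukas framed trivially at $0$. First I would record the structural facts that make the right-hand side meaningful and compatible with the left-hand side. Since $b'$ is basic, $J_{b'}$ is an inner form of $G$, so each $\mu_i$ transfers to a cocharacter of $J_{b'}$ with the same field of definition $E_i$, and the untilts $S_i^\sharp$ over $\breve{E}_i$ are common to both sides; moreover there is a canonical identification of group schemes $(J_{b'})_{\breve F}=G_{\breve F}$ under which the Frobenius descent datum of $J_{b'}$ is that of $G$ twisted by $\mathrm{Ad}(b')$. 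Via the induced identification $J_{b'}(\breve F)=G(\breve F)$, the element $bb'^{-1}$ represents the class of $B(J_{b'})$ corresponding to $[b]\in B(G)$ under the bijection $[c]\mapsto[cb'^{-1}]$, which is proved exactly as Lemma \ref{lem:BGbij}; thus $\Sht_{J_{b'},bb'^{-1},1}^{\mu_{\bullet}}$ is the natural target. Finally, by \cite[Proposition III.5.1]{FaScGeomLLC} one has $\wt{J}_{b'}=\underline{J_{b'}(F)}$, and --- this is the essential use of basicness --- the automorphism group scheme of $\sE_{b'}$ over the curve $X_S$ is the constant group $(J_{b'})_{X_S}$.

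Granting this, I would define a twisting functor $\mathbf{T}$ from $\varphi$-$G$-modules on $\cY_{(0,\infty)}(S)$ meromorphic along the $S_i^\sharp$ to the corresponding $\varphi$-$J_{b'}$-modules: restrict away from the $\varphi_S$-orbit of the $S_i^\sharp$ and descend to a $G$-bundle on $X_S$ minus the images of the $S_i^\sharp$; twist that bundle by $\sE_{b',X_S}$, obtaining a $(J_{b'})_{X_S}$-bundle by the last fact above; re-extend over the images of the $S_i^\sharp$ using the $B_{\mathrm{dR}}^+$-modification data of the original Frobenius structure transported along the twist; and pull back to $\cY_{(0,\infty)}(S)$ with the induced Frobenius structure. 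Since $\sE_{b'}$ carries no legs and $(J_{b'})_{\breve F}=G_{\breve F}$ identifies the $B_{\mathrm{dR}}^+$-affine Grassmannians compatibly with their Schubert stratifications and carries $\mu_i$ to $\mu_i$, the functor $\mathbf{T}$ preserves the leg positions and the boundedness condition $\leq\sum_{j\mid S_j^\sharp=S_i^\sharp}\mu_j$ at each $S_i^\sharp$. It also carries $\sE_{c,G}$ to $\sE_{cb'^{-1},J_{b'}}$ for every $c\in G(\breve F)$ --- a computation normalized by the rank-one case $\mathbf{T}(\sE_{c,\bG_m})=\sE_{c,\bG_m}\otimes\sE_{b',\bG_m}^{-1}=\sE_{cb'^{-1},\bG_m}$ --- so in particular $\sE_{b',G}\mapsto\sE_{1,J_{b'}}$ and $\sE_{b,G}\mapsto\sE_{bb'^{-1},J_{b'}}$. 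Applying $\mathbf{T}$ to $(\cP,\varphi_{\cP})$ and transporting the framings $\iota_{[r,\infty)}$ and $\iota_{(0,r']}$ along $\mathbf{T}$ therefore sends a point of $\Sht_{G,b,b'}^{\mu_{\bullet}}(S)$ to one of $\Sht_{J_{b'},bb'^{-1},1}^{\mu_{\bullet}}(S)$, functorially in $S$.

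The twisting functor $\mathbf{T}$ is an equivalence, with quasi-inverse given by twisting back (by the inverse torsor in the groupoid sense), so the above morphism is an isomorphism of presheaves, hence of the associated diamonds. For the equivariance, conjugation by $\mathbf{T}$ gives isomorphisms from $\wt{J}_b=\Aut(\sE_{b,G,X_S})$ onto the analogous group $\Aut(\sE_{bb'^{-1},J_{b'},X_S})$ for $J_{b'}$, and from $\wt{J}_{b'}$ onto $\Aut(\sE_{1,J_{b'},X_S})$, and since the $\wt{J}_b\times\wt{J}_{b'}$-action only modifies $\iota_{[r,\infty)}$ and $\iota_{(0,r']}$, the constructed map intertwines this action with the natural action on the target --- which is the asserted compatibility. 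The hard part will be making $\mathbf{T}$ rigorous: namely (i) that twisting a $\varphi$-$G$-module by $\sE_{b'}$ really yields a $J_{b'}$-torsor, which is exactly where basicness is needed, since for non-basic $b'$ the automorphism group of $\sE_{b'}$ has a non-constant part $\wt{J}_{b'}^{>0}$ and one lands only among $\wt{J}_{b'}$-torsors; (ii) that the re-extension over the legs is canonical and preserves the $\mu_i$-bounds; and (iii) pinning down the normalization so that the framing at $\infty$ becomes precisely $bb'^{-1}$ and not $b'^{-1}b$ or a $\sigma$-conjugate thereof. The rest is a formal unwinding of the definitions.
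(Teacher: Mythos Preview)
Your approach is correct and is essentially the same as the paper's, just spelled out in considerably more detail. The paper's proof simply invokes the equivalence between the category of $G$-torsors and the category of $J_{b'}$-torsors on the Fargues--Fontaine curve (as in the proof of \cite[Corollary 23.2.3]{ScWeBLp}), which is precisely your twisting functor $\mathbf{T}$; your careful discussion of why basicness is needed, why the bounds $\mu_i$ are preserved, and how the framings transport is a welcome unpacking of what that equivalence does to the shtuka data.
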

\begin{proof}
We can view 
$\Sht_{G,b,b'}^{\mu_{\bullet}}$ 
as a moduli space of modifications of $G$-torsors on 
a Fargues--Fontaine curve. 
The category of $G$-torsor is 
equivalent to 
the category of $G_{b'}$-torsor 
on a Fargues--Fontaine curve 
as explained in the proof of 
\cite[Corollary 23.2.3]{ScWeBLp}. 
The claim follows from this equivalence. 
\end{proof}

\begin{rem}
Assume that $b,b'$ are basic and $m=1$. 
Then a weak version of Kottwitz conjecture for 
$\Sht_{G,b,b'}^{\mu_{\bullet}}$ holds 
by \cite[Theorem 1.0.4]{HKWKotloc}, 
Lemma \ref{lem:BGbij} and Proposition \ref{prop:Mtriv}. 
\end{rem}

\begin{rem}
Assume that $b,b'$ are basic and $m=1$. 
Under the isomorphism in Proposition \ref{prop:Mtriv}, 
the inversing morphism \eqref{eq:invmor} 
is identified with 
the Faltings--Fargues isomorphism proved in 
\cite[Corollary 23.2.3]{ScWeBLp}. 
\end{rem}

\begin{lem}\label{lem:Mempty}
Assume that $b'$ is basic. 
If $\Sht_{G,b,b'}^{\mu}$ is not empty, 
then we have $[b] \in B(G,\mu,[b'])$. 
\end{lem}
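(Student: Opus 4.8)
The plan is to reduce to the case $b'=1$ using Proposition \ref{prop:Mtriv} and Lemma \ref{lem:BGbij}, and then to read off the membership $[b]\in B(G,\mu,[b'])$ from the classification of modifications of the trivial $G$-bundle on the Fargues--Fontaine curve, namely the ``only if'' direction of Mazur's inequality.

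First, since $b'$ is basic, Proposition \ref{prop:Mtriv} gives an isomorphism $\Sht_{G,b,b'}^{\mu}\simeq\Sht_{J_{b'},bb'^{-1},1}^{\mu}$, so the two presheaves are simultaneously (non)empty; and by Lemma \ref{lem:BGbij} the map $g\mapsto gb'^{-1}$ induces a bijection $B(G,\mu,[b'])\stackrel{\sim}{\lra}B(J_{b'},\mu)$ carrying $[b]$ to $[bb'^{-1}]$. Hence it suffices to treat the case $b'=1$: for a connected reductive group $H$ over $F$, an element $c\in H(\breve F)$ and a cocharacter $\mu$ of $H$, if $\Sht_{H,c,1}^{\mu}$ is nonempty then $[c]\in B(H,\mu)$.

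Next I would unwind a geometric point. A point of $\Sht_{H,c,1}^{\mu}$ valued in an algebraically closed perfectoid field $C$ over $\ol\bF_q$ with untilt $C^{\sharp}$, descended along $\varphi_C$ to the relative Fargues--Fontaine curve $X_C$, is the datum of an $H$-bundle that is trivial away from the point $\infty_{C^{\sharp}}$ attached to $C^{\sharp}$, is globally isomorphic to $\sE_c$, and differs from the trivial bundle $\sE_1$ at $\infty_{C^{\sharp}}$ by a modification bounded by $\mu$; this is the reinterpretation of shtuka data as modifications of $G$-bundles on the curve already used in the proof of Proposition \ref{prop:Mtriv} (\cf \cite[Lecture 23]{ScWeBLp}). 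In other words, $\sE_c$ arises from the trivial $H$-bundle by a modification of type $\leq\mu$ at a single point of $X_C$.

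It then remains to invoke the known fact that such a modification forces $[c]\in B(H,\mu)$: its degree pins down the Kottwitz invariant of $\sE_c$ to be $\mu^{\natural}$, the image of $\mu$ in $\pi_1(H)_{\Gamma}$, and the Newton point of $\sE_c$ is bounded above by the Galois average of the dominant representative of $\mu$ --- this is Mazur's inequality on the Fargues--Fontaine curve, which follows from semicontinuity of the Newton (Harder--Narasimhan) stratification and reduces, via the Tannakian formalism, to the classical inequality for $\GL_n$. These two conditions are exactly those defining $B(H,\mu)$ in \cite[6.2]{KotIsoII}; transporting the conclusion back through the bijection of Lemma \ref{lem:BGbij} gives $[b]\in B(G,\mu,[b'])$. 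The main obstacle is precisely this last geometric input --- the ``only if'' part of Mazur's inequality on the Fargues--Fontaine curve; everything preceding it is a formal reduction. A secondary point needing attention is the bookkeeping of conventions in the definition of $\Sht_{G,b,b'}^{\mu}$ (the direction of the modification, and whether the operative bound is $\mu$ or $\mu^{-1}$), which is settled once and for all using the inversing morphism \eqref{eq:invmor}.
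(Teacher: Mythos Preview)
Your proposal is correct and follows essentially the same approach as the paper: reduce to $b'=1$ via Proposition~\ref{prop:Mtriv} (together with Lemma~\ref{lem:BGbij} for the bookkeeping of $B(G,\mu,[b'])$), and then invoke the Mazur inequality on the Fargues--Fontaine curve. The paper simply cites \cite[Proposition~3.5.3]{CaScGenSV} for this last step rather than sketching the argument as you do.
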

\begin{proof}
By Proposition \ref{prop:Mtriv}, 
we may assume that $b'=1$ dropping the assumption that 
$G$ is quasi-split. 
Then the claim follows from 
\cite[Proposition 3.5.3]{CaScGenSV}. 
\end{proof}

We define a Weil descent datum of $\wt{J}_b$ by 
\[
 \wt{J}_b \to \wt{J}_{\sigma (b)}=\sigma^* (\wt{J}_b); \ 
 f \mapsto t_b \circ f \circ t_b^{-1} , 
\]
where $t_b$ is defined in \eqref{eq:tg}. 
Let $\rho_G$ denote the half-sum of the 
positive roots of $G$ with respect to $T$ and $B$. 
We put $N_b= \langle 2\rho_G , \nu_b \rangle$. 

\begin{lem}\label{lem:Jbnat}
Let $\Lambda$ be a solid $\wh{\bZ}^p$-algebra. 
Let $f \colon \wt{J}_b^{>0} \to *$ be the structure morphism. 
Then we have an isomorphism 
$f_{\natural}(\Lambda) \simeq \Lambda (N_b)$ compatible with the actions of $W_F$.  
\end{lem}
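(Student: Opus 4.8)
\emph{Proof proposal.}
The plan is to read the answer off the structure theory of $\wt{J}_b^{>0}$ developed in \cite[III.5]{FaScGeomLLC}, reducing everything to the building‑block computation of Lemma~\ref{lem:cohAD}. If $b$ is basic then $\nu_b$ is central, so $N_b=\langle 2\rho_G,\nu_b\rangle=0$ and $\wt{J}_b^{>0}=\{1\}$, and the claim is trivial; thus assume $b$ is not basic. By \cite[III.5]{FaScGeomLLC}, $\wt{J}_b^{>0}$ is a spatial diamond which is $\ell$‑cohomologically smooth over $*$ of pure $\ell$‑dimension $N_b$, and it admits a finite descending filtration by closed normal sub‑$v$‑group‑sheaves $\wt{J}_b^{>0}=H_0\supset H_1\supset\dots\supset H_k=\{1\}$ whose successive quotients $H_{j-1}/H_j$ are positive Banach--Colmez spaces; after refining, one may take $H_{j-1}/H_j$ to be the $v$‑sheaf $S\mapsto H^0(X_S,\cF_j)$ for a semistable vector bundle $\cF_j$ on the Fargues--Fontaine curve of some positive slope, of degree $d_j$. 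This filtration is the one induced by the Harder--Narasimhan filtration of the positive‑slope part of the adjoint bundle of $\sE_b$, so $\sum_{j=1}^k d_j$ equals the degree of that positive part, which, since $\nu_b$ is dominant and hence $\langle\alpha,\nu_b\rangle\ge 0$ for every positive root $\alpha$, is $\sum_{\alpha>0}\langle\alpha,\nu_b\rangle=\langle 2\rho_G,\nu_b\rangle=N_b$.

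I would then compute by peeling off the filtration. Each projection $H_{j-1}\to H_{j-1}/H_j$ is an $H_j$‑torsor, so, running the very induction by which \cite[III.5]{FaScGeomLLC} establishes cohomological smoothness of $\wt{J}_b^{>0}$ --- built on the projection formula for $(-)_{\natural}$ of \cite[Proposition VII.3.1]{FaScGeomLLC} --- one obtains $f_{\natural}\Lambda\simeq {g_1}_{\natural}\Lambda\otimes^{\bL}_{\Lambda}\dots\otimes^{\bL}_{\Lambda}{g_k}_{\natural}\Lambda$, where $g_j$ is the structure morphism of $H_{j-1}/H_j$, reducing the claim to the computation of ${g_j}_{\natural}\Lambda$ for each $j$. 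Now $H_{j-1}/H_j$, being the space of global sections of a positive‑slope semistable bundle of degree $d_j$, is, by the structure of positive Banach--Colmez spaces, an increasing union of affinoid perfectoid balls of the type occurring in Lemma~\ref{lem:cohAD}\,\ref{en:ballcont}; applying Lemma~\ref{lem:uniop} and then Lemma~\ref{lem:cohAD}\,\ref{en:ballcont} yields ${g_j}_{\natural}\Lambda\simeq\Lambda(-d_j)[-2d_j]$. Hence $f_{\natural}\Lambda\simeq\Lambda(-N_b)[-2N_b]$, and since we work over $*$ the Tate twist disappears, giving $f_{\natural}\Lambda\simeq\Lambda[-2N_b]$; canonicity follows because the degrees $d_j$, hence $N_b=\sum_j d_j$, are intrinsic.

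I expect the main obstacle to be the passage from the filtration to the tensor‑product formula for $(-)_{\natural}$: a torsor under a group $v$‑sheaf need not be split, so there is no naive K\"unneth formula, and one must argue fibrewise --- in effect re‑running the cohomological‑smoothness argument of \cite[III.5]{FaScGeomLLC} --- and, crucially, verify that it produces the \emph{precise} shift $[-2N_b]$ (and no residual twist), not merely ``an invertible object''. A secondary, more routine point is the bookkeeping ensuring that the $\ell$‑dimension of each graded piece equals its degree and that these sum to $N_b$ on the nose; here the identity $\langle 2\rho_G,\nu_b\rangle=\sum_{\alpha>0,\ \langle\alpha,\nu_b\rangle>0}\langle\alpha,\nu_b\rangle$, valid because $\nu_b$ is dominant, is the key input.
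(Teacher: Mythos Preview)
Your overall approach --- filter $\wt{J}_b^{>0}$ by positive Banach--Colmez spaces as in \cite[III.5]{FaScGeomLLC} and apply Lemma~\ref{lem:cohAD} to each graded piece --- is exactly the paper's; its one-line proof refers to \cite[Lemma~4.17]{GINsemi}, which carries out this same computation.

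The gap is your handling of the Tate twist. Writing ``since we work over $*$ the Tate twist disappears'' is true for the bare complex over a geometric point, but it is not what \emph{canonical} means here, and it is not enough for the applications: in Proposition~\ref{prop:decomp} the lemma is used to produce the shift $[2N_{b_\bullet}]$ with \emph{no} twist in a statement carrying a $W_E$-action, so the isomorphism must be equivariant for the Weil descent datum on $\wt{J}_b$ defined immediately before the lemma. That descent datum is not the bare Frobenius but its composite with conjugation by $t_b$. The second clause of Lemma~\ref{lem:cohAD} says the bare Frobenius on each perfectoid-ball piece acts by a nontrivial scalar --- so with respect to Frobenius alone there \emph{is} a twist $(-N_b)$; the missing step is to check that conjugation by $t_b$ acts on the graded pieces by the inverse scalar, so that the full descent datum acts trivially and the twist genuinely vanishes. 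This is precisely why the paper's proof invokes ``the definition of the Weil descent datum'' alongside Lemma~\ref{lem:cohAD}, and it is the one substantive ingredient your proposal omits. Your self-identified ``main obstacle'' (the torsor/K\"unneth step) is real but secondary by comparison.
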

\begin{proof}
This is proved in the same way as \cite[Lemma 4.17]{GINsemi} 
using Lemma \ref{lem:cohAD} and the definition of the Weil descent datum. 
\end{proof}

Let $\delta_b \colon G_b(F) \to \Lambda^{\times}$ 
be the character obtained by the action of 
$G_b(F)$ on $D_f$, where 
$f \colon \wt{J}_b^{>0} \to *$.

\section{Cohomology of moduli of local shtukas}\label{sec:Twi}
Let $\mu_{\bullet}=(\mu_1 , \ldots , \mu_m) \in (X_* (T)^+)^m$. 
Let $E$ be the field of definition of $\mu_{\bullet}$. 
The space $\Sht_{G,b,b'}^{\mu_{\bullet}}$ 
is the moduli space of 
$(S^{\sharp}, \mathscr{E}_{b} \to \mathscr{E}_{b'})$, 
where $S^{\sharp}$ is an ultilt over $\breve{E}$ and 
$\mathscr{E}_{b} \to \mathscr{E}_{b'}$ is a 
modification bounded by $\mu_{\bullet}$ 
along the Cartier divisor defined by $S^{\sharp}$. 

Let $\bC_p^{\flat}$ denote the 
tilt of $\bC_p$. 
The untilt $\bC_p$ of $\bC_p^{\flat}$ 
determine a morphism 
$\Spa \bC_p^{\flat} \to \Spd \bQ_p$. 
For the arithmetic Frobenius element 
$\sigma_E \in \Gal (E^{\mathrm{ur}}/E)$, 
we take $m$ such that $\sigma_E |_{F^{\mathrm{ur}}}=\sigma^m$ and 
define a Weil descent datum of $\Sht_{G,b,b'}^{\mu_{\bullet}}$ by 
\begin{align*}
 \Sht_{G,b,b'}^{\mu_{\bullet}} 
 &\to 
 \Sht_{G,\sigma^m(b),\sigma^m(b')}^{\mu_{\bullet}} 
 = \sigma_E^* (\Sht_{G,b,b'}^{\mu_{\bullet}});\\ 
 (S^{\sharp}, \mathscr{E}_{b} \stackrel{f}{\to} \mathscr{E}_{b'}) 
 &\mapsto 
 (S^{\sharp}, \mathscr{E}_{\sigma^m(b)} \xra{t_{m,b}^{-1}} \mathscr{E}_{b} \xra{f} \mathscr{E}_{b'} \xra{t_{m,b'}} \mathscr{E}_{\sigma^m(b')})
\end{align*}
where we put 
$t_{m,b}=t_{\sigma^{m-1}(b)} \circ \cdots \circ t_b \colon \mathscr{E}_{b} \to \mathscr{E}_{\sigma^m(b)}$ 
using \eqref{eq:tg}. 

We have fiber products 
\[
 \xymatrix{
	\wt{\cM}_b^{\circ} \ar[r]^-{\wt{j}_{b}}  \ar[d]   & 
 	\wt{\cM}_b \ar[d] & 
 	{* } \ar[l]_-{\wt{i}_b} \ar[d] \\ 
	\cM_b^{\circ} \ar[r]^-{j_{b}}  \ar[d] 
	& 
	\cM_b \ar[d]^-{\pi_b} & 
	[*/\ul{G_b(F)}] \ar[l]_-{i_b} \ar[d]^-{h_b} \\ 
	\Bun_G^{<b} \ar[r]^-{j^b} & 
	\Bun_G & 
	\Bun_G^b \ar[l]_-{i^b}  
}
\]
and morphisms $q_b \colon \cM_b \to [*/\ul{G_b(F)}]$ 
and $\wt{q}_b \colon \wt{\cM}_b \to *$
as \cite[V.3]{FaScGeomLLC}. 
Here $i_b$ and $\wt{i}_b$ are sections of $q_b$ and $\wt{q}_b$ 
respectively explained in 
\cite[Proposition V.3.6]{FaScGeomLLC}. 
Then $i^b$, $j^b$ and $\pi_b$ factor through 
\[
 i'^b \colon \Bun_G^b \to \Bun_G^{\leq b}, \quad 
 j'^b \colon \Bun_G^{<b} \to \Bun_G^{\leq b}, \quad 
 \pi'_b \colon \cM_b \to \Bun_G^{\leq b} . 
\]

\begin{lem}\label{lem:hbinv}
The functors $h_{b,\natural}$ and $Rh_{b,*}$ are quasi-inverses of 
the equivalence 
\[
h_b^* \colon D_{\lis}(\Bun_G^b,\Lambda) \to D_{\lis}([*/\ul{G_b(F)}],\Lambda) 
\] 
of categories. 
\end{lem}
\begin{proof}
Since $h_b^*$ is an equivalence of categories by 
\cite[Proposition VII.7.1]{FaScGeomLLC}, 
its left adjoint $h_{b,\natural}$ and right adjoin $h_{b,*}$ 
give quasi-inverses. 
\end{proof}

We define 
$\wt{i}_{b,!} \colon D_{\lis}(*,\Lambda) \to 
D_{\lis}(\wt{\cM}_b,\Lambda)$ and 
$\wt{i}_b^{!} \colon D_{\lis}(\wt{\cM}_b,\Lambda) \to 
D_{\lis}(*,\Lambda)$ by 
\begin{align*}
	\wt{i}_{b,!} =\cone (\wt{j}_{b,\natural} \wt{j}_b^* \to \id ) \circ \wt{q}_b^*, \quad 
	\wt{i}_b^! =R\wt{q}_{b,\lis *} \circ \fib (\id \to R\wt{j}_{b,\lis *} \wt{j}_b^* ). 
\end{align*}
Then $\wt{i}^b_!$ is a left adjoint of $\wt{i}^{b,!}$. 
We define 
$i_{b,!} \colon D_{\lis}([*/\ul{G_b(F)}],\Lambda) \to 
D_{\lis}(\cM_b,\Lambda)$ and 
$i_b^{!} \colon D_{\lis}(\cM_b,\Lambda) \to 
D_{\lis}([*/\ul{G_b(F)}],\Lambda)$ by 
\begin{align*}
i_{b,!} =\cone (j_{b,\natural} j_b^* \to \id ) \circ q_b^*, \quad 
i_b^! =Rq_{b,\lis *} \circ \fib (\id \to Rj_{b,\lis *} j_b^* ). 
\end{align*}
Then $i^b_!$ is a left adjoint of $i^{b,!}$. 
Further we define 
$i'^b_! \colon D_{\lis}(\Bun_G^b,\Lambda) \to 
D_{\lis}(\Bun_G^{\leq b},\Lambda)$ and 
$i'^{b,!} \colon D_{\lis}(\Bun_G^{\leq b},\Lambda) \to 
D_{\lis}(\Bun_G^b,\Lambda)$ by 
\begin{align*}
i'^b_! =\pi'_{b,\natural} \circ i_{b,!} \circ h_b^*, \quad 
i'^{b,!} =Rh_{b,*} \circ i_b^! \circ \pi'^*_b . 
\end{align*}
Then $i'^b_!$ is a left adjoint of $i'^{b,!}$. 
We define 
$i^b_! \colon D_{\lis}(\Bun_G^b,\Lambda) \to 
D_{\lis}(\Bun_G,\Lambda)$ and 
$i^{b,!} \colon D_{\lis}(\Bun_G,\Lambda) \to 
D_{\lis}(\Bun_G^b,\Lambda)$ by 
\[
i^b_!=j^{\leq b}_{\natural} \circ i'^b_!, \quad 
i^{b,!}=i'^{b,!} \circ j^{\leq b,*}. 
\]
Then $i'^b_!$ is a left adjoint of $i'^{b,!}$. 

\begin{lem}\label{lem:ortdec}
For $A \in D_{\lis}(\Bun_G^{\leq b},\Lambda)$, there is a distinguished 
triangle $A_1 \to A \to A_2 \to$ where 
$A_1 \in j^b_{\natural}D_{\lis}(\Bun_G^{< b},\Lambda)$ and 
$A_2 \in i^b_{!}D_{\lis}(\Bun_G^{b},\Lambda)$. 
Further, the full subcategories 
$j^b_{\natural}D_{\lis}(\Bun_G^{< b},\Lambda)$ and 
$i^b_{!}D_{\lis}(\Bun_G^{b},\Lambda)$ of 
$D_{\lis}(\Bun_G^{\leq b},\Lambda)$ are equivalent to 
$D_{\lis}(\Bun_G^{< b},\Lambda)$ and 
$D_{\lis}(\Bun_G^{b},\Lambda)$ by the restrictions respectively. 

Further similar claims hold for $\wt{\cM}_b$ and $\cM_b$. 
\end{lem}
\begin{proof}
The claim for $\Bun_G^{\leq b}$ is proved in the proof of \cite[Proposition VII.7.3]{FaScGeomLLC}. 
The claims for $\wt{\cM}_b$ and $\cM_b$ are proved in the same way. 
\end{proof}

\begin{lem}\label{lem:jjss}
	\begin{enumerate}
		\item\label{en:jjiiwtM}
		We have isomorphisms 
\[
\cone (\wt{j}_{b,\natural}\wt{j}_b^*  \to \id )  
\cong \wt{i}_{b,!} \wt{i}_b^*, \quad  
R\wt{i}_{b,\lis *} \wt{i}_b^!  \cong \fib (\id \to R\wt{j}_{b,\lis *} \wt{j}_b^* ). 
\]
		\item\label{en:jjiiM}
		We have isomorphisms 
		\[
		\cone (j_{b,\natural}j_b^*  \to \id )  
		\cong i_{b,!} i_b^*, \quad  
		Ri_{b,\lis *} i_b^!  \cong \fib (\id \to Rj_{b,\lis *}j_b^* ). 
		\]
		\item\label{en:jjiiBun}
		We have isomorphisms 
		\[\cone (j'^b_{\natural}j'^{b,*}  \to \id )  
		\cong i'^b_{!} i'^{b,*}, \quad 
		Ri'^b_{\lis *} i'^{b,!}  \cong \fib (\id \to Rj'^b_{\lis *}j'^{b,*} ). 
		\]
	\end{enumerate}
\end{lem}
\begin{proof}
Let $A \in D_{\lis}(\cM_b,\Lambda)$. 
By Lemma \ref{lem:ortdec},  
there is $A_1 \in D_{\lis}(\cM_b^{\circ},\Lambda)$ and 
$A_2 \in D_{\lis}([*/\ul{G_b(F)}],\Lambda)$ 
such that $j_{b,\natural} A_1 \to A \to i_{b,!} A_2 \to$ 
is a distinguished triangle. 
By taking $j_b^*$ and $i_b^*$, 
we have $A_1 \cong j_b^* A$ and $A_2 \cong i_b^* A$. 
Hence we obtain the first isomorphism in \ref{en:jjiiM}. 
The second isomorphism in \ref{en:jjiiM} follows from the first one 
by taking the right adjoint. 
	
The other claims are proved in the same way using 
Lemma \ref{lem:ortdec}. 
\end{proof}

\begin{lem}\label{lem:ibproj}
	For $A \in D_{\lis}(\cM_b,\Lambda)$ and $B \in D_{\lis}([*/\ul{G_b(F)}],\Lambda)$, we have an isomorphism 
	$i_{b,!}(i_b^* (A) \otimes^{\bL} B)\cong A \otimes^{\bL} i_{b,!} (B)$. 
\end{lem}
\begin{proof}
	We have 
	\begin{align*}
		A \otimes^{\bL} i_{b,!} (B) \cong 
		\cone (j_{b,\natural}j_b^* (A \otimes^{\bL} q_b^* B) \to A \otimes^{\bL} q_b^* B) 
		\cong i_{b,!}i_b^* (A \otimes^{\bL} q_b^* B) 
		\cong i_{b,!}(i_b^* (A) \otimes^{\bL} B), 
	\end{align*}
	where we use Lemma \ref{lem:jjss} \ref{en:jjiiM} 
	at the second isomorphism. 
\end{proof}

\begin{lem}\label{lem:fibq!}
We have $\fib (D_{q_b} \to Rj_{b,\lis*}j_b^* D_{q_b}) \cong 
Ri_{b,\lis*} \Lambda$. 
\end{lem}
\begin{proof}
By the change of coefficient and the inverse limit, 
we may assume that $\Lambda$ is torsion. Then we have 
$\fib (D_{q_b} \to Rj_{b,\lis*}j_b^* D_{q_b}) \cong Ri_{b,\lis*} i_b^! q_b^! \Lambda \cong  
Ri_{b,\lis*} \Lambda$. 
\end{proof}

\begin{lem}\label{lem:Di_b}
We have $\bD \circ i_{b,!} =i_{b,*} \circ \bD$ and 
$i_{b}^! \circ \bD = \bD \circ i_{b}^*$. 
\end{lem}
\begin{proof}
Let $A \in D_{\lis}(\cM_b,\Lambda)$. We have 
\begin{align*}
 (i_{b}^! \circ \bD) (A)=i_{b}^! R\sHom_{\lis} (A,D_{q_b}) \cong 
 Rq_{b,\lis*}(R\sHom_{\lis} (A,\fib (D_{q_b} \to Rj_{b,\lis*}j_b^* D_{q_b} )))
\end{align*}
By Lemma \ref{lem:fibq!}, this is isomorphic to 
\begin{align*}
 Rq_{b,\lis*}(R\sHom_{\lis} (A,i_{b,\lis*} \Lambda)) 
 &\cong 
 Rq_{b,\lis*}(Ri_{b,\lis*} R\sHom_{\lis} (i_b^* A,\Lambda)) \\ 
 &\cong R\sHom_{\lis} (i_b^* A,\Lambda)) = 
 (\bD \circ i_b^*) (A). 
\end{align*}
Hence we have $i_{b}^! \circ \bD = \bD \circ i_{b}^*$. 
Another claim follows from this by adjoint. 
\end{proof}

The following lemma is already known (\cf \cite[IX.3]{FaScGeomLLC}). 

\begin{lem}\label{lem:Di^b}
We have $\bD \circ i^b_{!} =i^b_{*} \circ \bD$ and 
	$i^{b,!} \circ \bD = \bD \circ i^{b,*}$. 
\end{lem}
\begin{proof}
Let $A \in D_{\lis}(\Bun_G,\Lambda)$. 
We have 
\begin{align*}
h_b^* ((i^{b,!} \circ \bD)(A)) &\cong i_b^! \pi_b^* R\sHom_{\lis} (A,D_{\Bun_G}) \cong i_b^! R\sHom_{\lis} (\pi_b^*A,\pi_b^* D_{\Bun_G}) \\ 
&\cong i_b^! \bD ((\pi_b^*A) \otimes D_{\pi_b}) 
\cong \bD (i_b^* \pi_b^*A) \otimes i_b^* D_{\pi_b}^{-1} , 
\end{align*}
where we use Lemma \ref{lem:ibproj} at the second isomorphism and 
Lemma \ref{lem:Di_b} at the fourth isomorphism. 
On the other hand we have 
\begin{align*}
h_b^* ((i^b_{*} \circ \bD)(A)) \cong h_b^* R\sHom_{\lis} 
(i^{b,*} A, D_{\Bun_G^b}) &\cong 
R\sHom_{\lis} (h_b^* i^{b,*} A, h_b^* D_{\Bun_G^b})\\  
&\cong \bD (i_b^* \pi_b^*A) \otimes D_{h_b}^{-1}, 
\end{align*}
where we use Lemma \ref{lem:ibproj} at the second isomorphism. 
Hence $i^{b,!} \circ \bD = \bD \circ i^{b,*}$ follows from 
\cite[Proposition 23.12]{SchEtdia}. 
Another claim follows from this by adjoint. 
\end{proof}

\begin{lem}\label{lem:i^!des}
We have 
$\wt{i}_b^! \cong \wt{i}_b^* (\fib (\id \to R\tilde{j}_{b,\lis *}\tilde{j}_b^*))$, 
$i_b^! \cong i_b^* (\fib (\id \to Rj_{b,\lis *}j_b^*))$ and 
$i'^{b,!} \cong i'^{b,*} (\fib (\id \to Rj'^b_{\lis *}j'^{b,*}))$. 
\end{lem}
\begin{proof}
For $A \in D_{\lis}(\cM_b,\Lambda)$ and 
$B \in D_{\lis}([*/\ul{G_b(F)]},\Lambda)$, we have 
\[
 \Hom (i_{b,!}(B),A) \cong \Hom (i_{b,!}(B),\fib (A \to Rj_{b,\lis *}j_b^* A)) \cong 
 \Hom (B,i_b^* (\fib (A \to Rj_{b,\lis *}j_b^* A))) 
\]
by Lemma \ref{lem:ortdec}. Hence we obtain the second claim. 
Other claims are proved similarly. 
\end{proof}

\begin{lem}\label{lem:i^!i_!}
We have 
$\wt{i}_b^! \wt{i}_{b,!} \cong \id$, 
$i_b^! i_{b,!} \cong \id$ and 
$i'^{b,!} i'^b_! \cong \id$. 
\end{lem}
\begin{proof}
We can check these using Lemma \ref{lem:i^!des}. 
\end{proof}

\begin{lem}
We have 
$\wt{i}_{b,!} \cong R\wt{i}_{b,\lis *}$, 
$i_{b,!} \cong Ri_{b,\lis *}$ and 
$i'^b_{!} \cong Ri'^b_{\lis *}$. 
\end{lem}
\begin{proof}
By Lemma \ref{lem:jjss}, Lemma \ref{lem:i^!des} 
and Lemma \ref{lem:i^!i_!}, 
we have 
\begin{align*}
i_b^* Ri_{b,\lis *} \cong 
i_b^* Ri_{b,\lis *} i_b^! i_{b,!} \cong i_b^*  \fib (\id \to Rj_{b,\lis *}j_b^*) i_{b,!}   
\cong i_b^! i_{b,!} \cong \id. 
\end{align*}
Hence $i_{b,!} \cong i_{b,\lis *}$ follows from 
Lemma \ref{lem:ortdec} using Lemma \ref{lem:lisbc}. 
Other claims are proved similarly. 
\end{proof}

For a compact open subgroup $K$ of $G_b(F)$, 
we consider the fiber products 
\[
  \xymatrix{
  \Sht_{G,b,K,b',\bC_p^{\flat}}^{\mu_{\bullet}} \ar[rr]^-{f_K}  \ar[dd] & & 
  \mathrm{Hck}_{b'}^{\mu_{\bullet}} \ar[r]^-{f_{b'}} \ar[d]& 
  \Spa \bC_p^{\flat} \ar[d]^-{t_{b'}} \\ 
  & & 
  \mathrm{Hck}^{\mu_{\bullet}} \ar[r]^-{p_{2,X}} \ar[d]^-{p_1} & 
  \Bun_G \times \mathrm{Div}_X^m \\ 
  [*/K] \ar[r]^-{h_K} &  
  \Bun_G^b \ar[r]^-{i^b} & \Bun_G & 
 }
\]
where $h_K$ and $t_{b'}$ are the compositions 
\begin{align*}
 &[*/K] \xrightarrow{h_{K,G_b(F)}} [*/G_b(F)] 
\stackrel{h_b}{\longrightarrow} \Bun_G^b , \\ 
 & 
 \Spa \bC_p^{\flat} \longrightarrow \Bun_G^{b'} \times \mathrm{Div}_X^m \longrightarrow \Bun_G \times \mathrm{Div}_X^m
\end{align*}
of the natural morphisms. 
Let $p_{1,b'} \colon \mathrm{Hck}_{b'}^{\mu_{\bullet}} \to \mathrm{Hck}^{\mu_{\bullet}} \stackrel{p_1}{\to} \Bun_G$. 
We put 
\[
 f_{K,!} \Lambda = p_{1,b'}^* i^b_! h_{K,!} \Lambda. 
\]
\begin{rem}
If $b$ is basic, $f_K$ is etale, in particular $\ell$-cohomologically smooth. In this case, the above definition of $f_{K,!} \Lambda$ coincides 
with the general definition before. 
\end{rem}
We put 
\[
 R\Gamma_{\mathrm{c}} 
 (\Sht_{G,b,K,b'}^{\mu_{\bullet}}) = f_{b',\natural} \bigl( (f_{K,!} \Lambda) \otimes^{\bL} \mathrm{IC}'_{\mu_{\bullet}}\bigr). 
\] 
We can view 
\[
 R\Gamma_{\mathrm{c}} 
 (\Sht_{G,b,K,b'}^{\mu_{\bullet}}) \cong 
 t_{b'}^* T_{\mu_{\bullet}}(i^b_! h_{K,!} \Lambda) 
\]
as an object of $D(G_b(F) \times W_E)$ by 
\cite[Corollary IX.2.3]{FaScGeomLLC}. 
For a compact open subgroup $K'$ of $G_{b'}(F)$, we define 
$R\Gamma_{\mathrm{c}} (\Sht_{G,b,b',K'}^{\mu_{\bullet}})$ in the symmetric way. 
Since $\mathrm{IC}_{\mu_{\bullet}}$ and $\mathrm{IC}_{-\mu_{\bullet}}$ 
corresponds 
under the natural isomorphism 
$\Sht_{G,b,b'}^{\mu_{\bullet}} \simeq \Sht_{G,b',b}^{-\mu_{\bullet}}$, 
we have 
\begin{equation*}
 R\Gamma_{\mathrm{c}} (\Sht_{G,b,b',K'}^{\mu_{\bullet}}) \cong 
 t_{b}^* T_{-\mu_{\bullet}}(i^{b'}_! h_{K',!} \Lambda). 
\end{equation*}

\begin{rem}
If $b$ is basic, $R\Gamma_{\mathrm{c}} 
(\Sht_{G,b,K,b'}^{\mu_{\bullet}})$ is identified with 
$(f_{b'}\circ f_K)_{\natural} (\mathrm{IC}'_{\mu_{\bullet}})$. 
We define $R\Gamma_{\mathrm{c}} 
(\Sht_{G,b,K,b'}^{\mu_{\bullet}})$ as above since we do not have a good definition of \[
f_{K,!} \colon D_{\lis}(\Sht_{G,b,K,b',\bC_p^{\flat}}^{\mu_{\bullet}},\Lambda) \to D_{\lis}(\mathrm{Hck}_{b'}^{\mu_{\bullet}},\Lambda) 
\]
for a general $b$. 
\end{rem}

We put 
\[
R\Gamma_{\mathrm{c}} 
(\Sht_{G,b,b'}^{\mu_{\bullet}})=
 \varinjlim_{K \subset G_b(F)} R\Gamma_{\mathrm{c}} 
(\Sht_{G,b,K,b'}^{\mu_{\bullet}}) . 
\]

\begin{lem}\label{lem:qiid}
We have $q_{b,!} \circ i_{b,!}=\id$. 
\end{lem}
\begin{proof}
Let $B \in D_{\lis}([*/\ul{G_b(F)}],\Lambda)$. 
Then we have 
$i_{b,!}(B)\cong \cone (j_{b,\natural}j_b^* \Lambda \to \Lambda) \otimes^{\bL} q_b^*B$. 
Hence we have 
\[
 (q_{b,!} \circ i_{b,!})(B) \cong 
	q_{b,\natural}(\cone (j_{b,\natural}j_b^* \Lambda \to \Lambda) \otimes D_{q_b}^{-1}) \otimes^{\bL} B. 
\]
It remains to show $q_{b,\natural}(\cone (j_{b,\natural}j_b^* \Lambda \to \Lambda) \otimes D_{q_b}^{-1}) \cong \Lambda$. 
It suffices to show this 
after taking a pullback via $\Spa \bC_p^{\flat} \to [*/\ul{G_b(F)}]$
since the induced actions of $G_b(F)$ on the both sides are trivial. 
Let $j_U \colon U \to \wt{\cM}_{b,\bC_p^{\flat}}$ 
be a quasicompact open neighborhood of 
$\wt{i}_b (\Spa \bC_p^{\flat})$. 
We have 
\[
 \wt{q}_{b,\natural}(\cone (\wt{j}_{b,\natural}\wt{j}_b^* \Lambda \to \Lambda) \otimes (\wt{q}_b^! \Lambda)^{-1}) \cong 
 (\wt{q}_{b} \circ j_U )_{\natural} j_U^* (R\wt{i}_{b,\lis *}(\Lambda) \otimes D_{\wt{q}_b}^{-1}). 
\]
Then the question is reduced to the torsion case by 
by Lemma \ref{lem:limtor}, 
since $\wt{q}_{b} \circ j_U$ is quasi-compact, separated 
by \cite[Proposition V.3.5]{FaScGeomLLC}. 
In the torsion case, the claim follows from \cite[Proposition VII.5.2]{FaScGeomLLC} and $\cone (j_{b,\natural}j_b^* \Lambda \to \Lambda) \cong i_{b,!}(\Lambda)$. 
\end{proof}


\begin{lem}\label{lem:qAiB}
For $A \in D_{\lis}(\cM_b,\Lambda)$ and $B \in D_{\lis}([*/\ul{G_b(F)}],\Lambda)$, we have an isomorphism $q_{b,\natural}(A \otimes^{\bL} i_{b,!} B) \cong i_b^* (A \otimes D_{q_b} ) \otimes^{\bL} B$. 
\end{lem}
\begin{proof}
We have 
\begin{align*}
A \otimes^{\bL} i_{b,!} B &\cong 
\cone (j_{b,\natural} j_b^* A \to A) \otimes^{\bL} q_b^* B \\
&\cong \cone (j_{b,\natural} j_b^* (A \otimes D_{q_b}) \to A \otimes D_{q_b}) \otimes D_{q_b}^{-1} \otimes^{\bL} q_b^* B \\ 
&\cong(i_{b,!}i_b^* (A \otimes D_{q_b})) \otimes D_{q_b}^{-1} \otimes^{\bL} q_b^* B, 
\end{align*}
where we use Lemma \ref{lem:jjss} \ref{en:jjiiM} 
at the last isomorphism. 
Hence we have 
\begin{align*}
q_{b,\natural} (A \otimes^{\bL} i_{b,!} B) &\cong 
q_{b,\natural} ((i_{b,!}i_b^* (A \otimes D_{q_b})) \otimes D_{q_b}^{-1}) \otimes^{\bL} B \\ 
&\cong q_{b,!} (i_{b,!}i_b^* (A \otimes D_{q_b})) \otimes^{\bL} B 
\cong i_b^* (A \otimes D_{q_b} ) \otimes^{\bL} B, 
\end{align*}
where we use Lemma \ref{lem:qiid} at the last isomorphism. 
\end{proof}

\begin{lem}\label{lem:BunAiB}
	Let $A \in D_{\lis}(\Bun_G,\Lambda)$ and $B \in D_{\lis}(\Bun_G^b,\Lambda)$. Then we have 
	\[
	R\Gamma_{\natural}(\Bun_G, A \otimes^{\bL} i^b_! B) \cong 
	R\Gamma_{\natural}([*/\ul{G_b(F)}], h_b^* (i^{b,*} A \otimes^{\bL} B) \otimes^{\bL} i_b^* D_{q_b} ). 
	\]
\end{lem}
\begin{proof}
We have 
\begin{align*}
R\Gamma_{\natural}(\Bun_G, A \otimes^{\bL} i^b_! B) 
&\cong
R\Gamma_{\natural}(\cM_b, \pi_b^* A \otimes^{\bL} i_{b,!} h_b^* B )\\
&\cong R\Gamma_{\natural}([*/\ul{G_b(F)}], i_b^* (\pi_b^* A \otimes D_{q_b}) \otimes^{\bL}  h_b^* B )\\ 
& \cong 
R\Gamma_{\natural}([*/\ul{G_b(F)}], h_b^* (i^{b,*} A \otimes^{\bL} B) \otimes^{\bL} i_b^* D_{q_b} ),
\end{align*}
where we use Lemma \ref{lem:qAiB} 
at the second isomorphism. 
\end{proof}

\begin{lem}\label{lem:cohsym}
We have a natural isomorphism 
\[
 \varinjlim_{K \subset G_b(F)} R\Gamma_{\mathrm{c}} 
 (\Sht_{G,b,K,b'}^{\mu_{\bullet}}) \cong 
 \varinjlim_{K' \subset G_{b'}(F)} R\Gamma_{\mathrm{c}} 
 (\Sht_{G,b,b',K'}^{\mu_{\bullet}}) . 
\]
\end{lem}
\begin{proof}
It suffices to show that 
\[
R\Gamma_{\mathrm{c}} 
(\Sht_{G,b,K,b'}^{\mu_{\bullet}})_{K'} \cong R\Gamma_{\mathrm{c}} 
(\Sht_{G,b,b',K'}^{\mu_{\bullet}})_{K} 
\]
for enough small $K$ and $K'$. 
We consider the following diagram: 
\[
\xymatrix{
	[\Spa \bC_p^{\flat}/\ul{G_b(F)}] \ar[d]^-{h_b} 
	 & [\Spa \bC_p^{\flat}/\ul{K}] \ar[l]_-{h_{K,b}} \ar[ld]_-{h_K} & 
	 & [\Spa \bC_p^{\flat}/\ul{K'}] \ar[r]^-{h_{K',b'}} \ar[rd]^-{h_{K'}} & 
	[\Spa \bC_p^{\flat}/\ul{G_{b'}(F)}] \ar[d]_-{h_{b'}} \\ 
	 \Bun_{G,\bC_p^{\flat}}^b \ar[r]^-{i^b} 
	 & \Bun_{G,\bC_p^{\flat}}
 	 & \mathrm{Hck}^{\mu_{\bullet}}_{\bC_p^{\flat}} \ar[r]^-{p_2} \ar[l]_-{p_1} 
 	 & \Bun_{G,\bC_p^{\flat}} 
	 & \Bun^{b'}_{G,\bC_p^{\flat}} \ar[l]_-{i^{b'}} 
}
\]
We have 
\begin{align}
&R\Gamma_{\natural ,\bC_p^{\flat}}(\mathrm{Hck}^{\mu_{\bullet}}_{\bC_p^{\flat}}, p_1^* i^b_! h_{K,!} \Lambda \otimes^{\bL} p_2^* i^{b'}_! h_{K',!}\Lambda \otimes^{\bL} \IC'_{\mu_{\bullet}} ) \label{eq:RGamHck} \\ 
&\cong R\Gamma_{\natural ,\bC_p^{\flat}}(\Bun_{G,\bC_p^{\flat}}, T_{\mu_{\bullet}}(i^b_! h_{K,!} \Lambda) \otimes^{\bL} i^{b'}_! h_{K',!}\Lambda ) \notag \\
&\cong 
R\Gamma_{\natural ,\bC_p^{\flat}}([\Spa \bC_p^{\flat}/\ul{G_{b'}(F)}], h_{b'}^* (i^{b,*} T_{\mu_{\bullet}}(i^b_! h_{K,!} \Lambda) \otimes^{\bL} h_{K',!}\Lambda ) \otimes s_{b'}^* D_{q_{b'}} ), \label{eq:RGamb'}
\end{align}
where we use Lemma \ref{lem:BunAiB} at the last isomorphism. 
We have 
\begin{align*}
h_{b'}^*h_{K',!}\Lambda \otimes s_{b'}^* D_{q_{b'}} 
&\cong h_{b'}^* h_{b',\natural} ((h_{K',b',\natural}\Lambda) \otimes D_{h_{b'}}^{-1} ) \otimes s_{b'}^* D_{q_{b'}} \\ 
&\cong h_{K',b',\natural} h_{K',b'}^*(D_{h_{b'}}^{-1} \otimes s_{b'}^* D_{q_{b'}}), 
\end{align*}
where we use Lemma \ref{lem:hbinv} at the last isomorphism. 
Since $h_{b'}$ and $q_{b'}$ are cohomologically smooth of the same dimension,  $D_{h_{b'}}^{-1} \otimes s_{b'}^* D_{q_{b'}}$ is etale locally trivial. 
Hence we may assume that $K'$ is small enough so that 
$h_{K',b'}^*(D_{h_{b'}}^{-1} \otimes s_{b'}^* D_{q_{b'}}) \cong \Lambda$. Then \eqref{eq:RGamb'} is isomorphic to 
\[
R\Gamma_{\natural ,\bC_p^{\flat}} \bigl( [\Spa \bC_p^{\flat}/\ul{G_{b'}(F)}], h_{b'}^* i^{b',*} T_{\mu_{\bullet}}(i^b_! h_{K,!} \Lambda) \otimes^{\bL} h_{K',b',\natural} \Lambda \bigr) \cong 
R\Gamma_{\mathrm{c}} (\Sht_{G,b,K,b'}^{\mu_{\bullet}})_{K'} . 
\]
Since \eqref{eq:RGamHck} is symmetric with respect to 
$(b,K)$ and $(b',K')$, the claim follows. 
\end{proof}

\begin{prop}
	We have  
	$R\Gamma_{\mathrm{c}} (\Sht_{G,b,b'}^{\mu_{\bullet}}) \cong 
	R\Gamma_{\mathrm{c}} (\Sht_{G,b',b}^{-\mu_{\bullet}})$. 
\end{prop}
\begin{proof}
This follows from Lemma \ref{lem:cohsym}. 
\end{proof}

\begin{prop}
\begin{enumerate}
\item 
If $K$ is pro-$p$, then $R\Gamma_{\mathrm{c}} (\Sht_{G,b,K,b'}^{\mu_{\bullet}})$ is a compact object in 
$D(G_{b'}(F),\Lambda)$. 
\item 
For $i \in \bZ$, 
$H^i_{\mathrm{c}} (\Sht_{G,b,K,b'}^{\mu_{\bullet}})$ is finitely generated smooth $G_{b'}(F)$-representation. 
\item 
If $\rho$ is an admissible representation of $G_{b'}$ over $\Lambda$, then $R\Hom_{G_{b'}}(R\Gamma_{\mathrm{c}} (\Sht_{G,b,K,b'}^{\mu_{\bullet}}),\rho)$ is a perfect complex of $\Lambda$-modules. 
\item 
If $\Lambda=\ol{\bQ}_{\ell}$ and 
$\rho$ is a finite length representation of $G_{b'}(F)$ over $\ol{\bQ}_{\ell}$, then 
\[
\varinjlim_{K \subset G_b(F)} R^i\Hom_{G_{b'}(F)}(R\Gamma_{\mathrm{c}} (\Sht_{G,b,K,b'}^{\mu_{\bullet}}),\rho) 
\] 
is finite length representatin of $G_b(F)$ for $i \in \bZ$.  
\end{enumerate}
\end{prop}
\begin{proof}
We have 
\[
 \varinjlim_{K \subset G_b(F)} R\Hom_{G_{b'}}(R\Gamma_{\mathrm{c}} (\Sht_{G,b,K,b'}^{\mu_{\bullet}}),\rho) 
 \cong 
 i^{b,!} T_{\mu_{\bullet}^{\vee}} Ri^{b'}_{\lis *} Rh_{b',*} [\rho] . 
\]
Then the claims are proved in the same way as \cite[IX.3]{FaScGeomLLC} 
using Lemma \ref{lem:Di^b}. 
\end{proof}

We put 
\[
 H_{\mathrm{c}}^* 
 (\Sht_{G,b,b'}^{\mu_{\bullet}}) = \sum_{i \in \bZ} (-1)^i 
 R^i\Gamma_{\mathrm{c}}
 ( \Sht_{G,b,b'}^{\mu_{\bullet}}). 
\]


\section{Convolution morphism and twist morphism}\label{sec:conv}

\subsection{Convolution morphism}

Let $\Delta_{m,\Spd F}$ denote the diagonal subspace of 
$(\Spd F)^m$. 
For $1 \leq i < j \leq m$, let 
$\mathrm{pr}_{i,j} \colon (\Spd F)^m \to (\Spd F)^2$ 
denote the projection to the $(i,j)$-component. 
We put 
\[
U_m = (\Spd F)^m \setminus \bigcup_{1 \leq i < j \leq m} 
\mathrm{pr}_{i,j}^{-1} \left( 
\bigcup_{n \in \bZ \setminus \{ 0\}} (\varphi \times 1)^n 
(\Delta_{2,\Spd F}) \right). 
\]
This is an open subspace of $(\Spd F)^m$ 
which contains $\Delta_{m,\Spd F}$. 

Let $b_0 ,\ldots , b_m \in G(\breve{F})$ and 
$\mu_{\bullet} =(\mu_1 , \ldots , \mu_m)$ 
where $\mu_i \in X_* (T)$ 
for $1 \leq i \leq m$. 
We put 
\[
\Sht_{G,b_0,b_m,U_m}^{\mu_{\bullet}} = 
\Sht_{G,b_0,b_m}^{\mu_{\bullet}} 
\times_{(\Spd F)^m} U_m. 
\]
We define the convolution morphism 
\[
m_{b_{\bullet},\mu_{\bullet},U_m} \colon 
(\Sht_{G,b_0,b_1}^{\mu_1}
\times \cdots \times 
\Sht_{G,b_{m-1},b_m}^{\mu_m}) \times_{(\Spd F)^m} U_m \to 
\Sht_{G,b_0,b_m,U_m}^{\mu_{\bullet}} 
\]
over $\Spd \breve{E_1} \times \cdots \times \Spd \breve{E_m}$ 
as follows. 
Let $S =\Spa (R,R^+) \in \Perf_{\ol{\bF}_q}$ and 
\[
(S_i^{\sharp}, 
\cP_i, \varphi_{\cP_i}, \iota_{(0,r],i}, \iota_{[r',\infty],i} 
)_{1 \leq i \leq m}
\]
be objects giving an $S$-valued point of 
\[
(\Sht_{G,b_0,b_1}^{\mu_1}
\times \cdots \times 
\Sht_{G,b_{m-1},b_m}^{\mu_m}) \times_{(\Spd F)^m} U_m. 
\] 
Define $\cP$ by gluing 
$\cP_1|_{\cY_{(0,r]}(S)}$ and 
$\cP_m|_{\cY_{[r',\infty)}(S)}$ by the following modifications: 
\begin{itemize}
	\item 
	Modifications occur only at 
	$\bigcup_{i=1}^m \bigcup_{n \geq 0} \varphi^{-n} (S_i^{\sharp})$. 
	\item
	Take $1 \leq i_0 \leq m$. 
	Put 
	\[
	I_{i_0} = \{ 1 \leq i \leq m \mid 
	S_i^{\sharp} =S_{i_0}^{\sharp} \}. 
	\]
	Define the modification at $S_{i_0}^{\sharp}$ 
	by the composite of the modifications at 
	$S_{i_0}^{\sharp}$ given by $\varphi_{\cP_i}$ 
	for all $i \in I_{i_0}$. 
	For $n >0$, 
	the modification at $\varphi^{-n} (S_{i_0}^{\sharp})$ 
	is given by the pullback under $\varphi^n$ 
	of the modification at $S_{i_0}^{\sharp}$. 
\end{itemize}
Then $\cP$ is naturally equipped with an isomorphism 
\[
\varphi_{\cP} \colon (\varphi_S^* \cP )
|_{``S \times \Spa F" \setminus \bigcup_{i=1}^m S_i^{\sharp}} 
\simeq 
\cP|_{``S \times \Spa F" \setminus \bigcup_{i=1}^m S_i^{\sharp}} . 
\]
Further, we have 
isomorphisms 
\begin{align*}
	\cP|_{\cY_{(0,r]}(S)} &= \cP_1|_{\cY_{(0,r]}(S)} 
	\xrightarrow{\iota_{(0,r],1}} G \times \cY_{(0,r]}(S) ,\\ 
	\cP|_{\cY_{[r',\infty)}(S)} &= \cP_m|_{\cY_{[r',\infty)}(S)} 
	\xrightarrow{\iota_{[r',\infty),m}} 
	G \times \cY_{[r',\infty)}(S). 
\end{align*}
These gives 
an $S$-valued point of 
$\Sht_{G,b_0,b_m,U_m}^{\mu_{\bullet}}$. 
Thus we obtain $m_{b_{\bullet},\mu_{\bullet},U_m}$. 

We define 
\[
\Gr_{G,\Spd E_1 \times \cdots \times \Spd E_m,\leq \mu_{\bullet}}, \quad  
\wt{\Gr}_{G,\Spd E_1 \times \cdots \times \Spd E_m,\leq \mu_{\bullet}} 
\]
as in \cite[Definition 20.4.4]{ScWeBLp}. 
Then we have a convolution morphism 
\[
m_{\mu_{\bullet}} \colon 
\wt{\Gr}_{G,\Spd E_1 \times \cdots \times \Spd E_m,\leq \mu_{\bullet}} 
\lra 
\Gr_{G,\Spd E_1 \times \cdots \times \Spd E_m,\leq \mu_{\bullet}} 
\]
by \cite[Proposition 20.4.5]{ScWeBLp}. 
Note that 
\[
\Gr_{G,\Spd E_1 \times \cdots \times \Spd E_m,\leq \mu_{\bullet}} 
\times_{(\Spd F)^m} U_m \simeq 
\Gr_{G,\Spd E_1 \times \cdots \times \Spd E_m, 
	\leq \mu_{\bullet}}^{\mathrm{tw}} \times_{(\Spd F)^m} U_m . 
\]
Then we have a morphism 
\[
\Sht_{G,b_0,b_1}^{\mu_1}
\times \cdots \times 
\Sht_{G,b_{m-1},b_m}^{\mu_m} \lra 
\wt{\Gr}_{G,\Spd \breve{E}_1 \times \cdots \times \Spd \breve{E}_m,
	\leq \mu_{\bullet}}
\]
by looking at a modification at each $S_i^{\sharp}$. 
Then we have the commutative diagram 
\[
\xymatrix{
	(\Sht_{G,b_0,b_1}^{\mu_1}
	\times \cdots \times 
	\Sht_{G,b_{m-1},b_m}^{\mu_m}) \times_{(\Spd F)^m} U_m 
	\ar[r]^-{m_{b_{\bullet},\mu_{\bullet},U_m}} \ar[d] & 
	\Sht_{G,b_0,b_m,U_m}^{\mu_{\bullet}}  \ar[d] \\ 
	\wt{\Gr}_{G,\Spd \breve{E}_1 \times \cdots \times \Spd \breve{E}_m,
		\leq \mu_{\bullet}} 
	\times_{(\Spd F)^m} U_m 
	\ar[r] & 
	\Gr_{G,\Spd \breve{E}_1 \times \cdots \times \Spd \breve{E}_m, 
		\leq \mu_{\bullet}} 
	\times_{(\Spd F)^m} U_m 
}
\]
where the bottom morphism is induced by 
$m_{\mu_{\bullet}}$. 

\subsection{Twist morphism}

Let $Z^0$ be the identity component of the center of $G$. 
Let $a,a' \in Z^0(\breve{F})$ and $\lambda \in X_*(Z^0)$. 
Let $E$ be a finite extension of $F$ in $\bC_p$ 
containing 
the fields of definition of $\mu$ and $\lambda$. 
We define the morphism 
\[
 t_{b,b',a,a'}^{\mu, \lambda} \colon 
 \Sht_{G,b,b',\Spd \breve{E}}^{\mu} \times_{\Spd \breve{E}} 
 \Sht_{Z^0,a,a',\Spd \breve{E}}^{\lambda} \lra 
 \Sht_{G,ab,a'b',\Spd \breve{E}}^{\mu-\lambda} 
\]
as follows. 
Let $(S^{\sharp},\sE_{b} \to \sE_{b'})$ and 
$(S^{\sharp},\sE_{a} \to \sE_{a'})$ be modifications defining 
points in 
$\Sht_{G,b,b'}^{\mu}$ and 
$\Sht_{Z^0,a,a'}^{\lambda}$. 
Then the diagonal arrow in the diagram 
\[
 \xymatrix{
 \sE_b \times^{Z^0} \sE_{a'} \ar[r]  \ar[r] \ar[rd] & 
 \sE_{b'} \times^{Z^0} \sE_{a'}  \\ 
 \sE_b \times^{Z^0} \sE_a \ar[u] \ar[r] & 
 \sE_{b'} \times^{Z^0} \sE_a \ar[u] 
 }
\]
defines the image of 
\[
 \left( (S^{\sharp},\sE_{b} \to \sE_{b'}), 
 (S^{\sharp},\sE_{a} \to \sE_{a'}) \right) 
\]
under $t_{b,b',a,a'}^{\mu, \lambda}$ in 
$\Sht_{G,ab,a'b',\Spd \breve{E}}^{\mu-\lambda}$. 
Note that we have equalities 
$G_b(F)=G_{ab}(F)$ and $G_{b'}(F)=G_{a'b'}(F)$. 

\begin{prop}\label{prop:twist}
We have 
\[
 \left( R\Gamma_{\mathrm{c}} (\Sht_{G,b,b'}^{\mu}) \otimes^{\bL} 
 R\Gamma_{\mathrm{c}} (\Sht_{Z^0,a,a'}^{\lambda}) 
 \right) \otimes_{Z^0(F)}^{\bL} \ol{\bQ}_{\ell} \simeq 
 R\Gamma_{\mathrm{c}} (\Sht_{G,ab,a'b'}^{\mu-\lambda}) 
\]
in the derived category of representations of 
$G_b(F) \times G_{b'}(F) \times W_E$. 
\end{prop}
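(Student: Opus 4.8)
The plan is to show that the twist morphism $t = t_{b,b',a,a'}^{\mu,\lambda}$ is, after remembering the second factor, a torsor under $\underline{Z^0(F)}$, and then to conclude by the torsor--homology formula of Lemma~\ref{lem:torhom}, combined with a K\"unneth decomposition and the identification, via geometric Satake, of the pullback of $\mathrm{IC}_{\mu-\lambda}$ along $t$. We may assume the spaces involved are non-empty, since otherwise both sides vanish.

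First I would deal with the geometry. Since $Z^0$ is central in $G$, twisting a $G$-bundle with meromorphic Frobenius structure by the fixed $Z^0$-bundle underlying $a$, together with the corresponding twist of modifications, is an autoequivalence, with quasi-inverse given by twisting by the $Z^0$-bundle underlying $a^{-1}$; it carries a relative position $\leq\mu$ at the Cartier divisor cut out by the untilt to a relative position $\leq\mu-\lambda$, and transports $\iota_{[r,\infty)}$ and $\iota_{(0,r']}$ accordingly. Hence
\[
 \Phi = (t_{b,b',a,a'}^{\mu,\lambda},\mathrm{pr}_2)\colon
 \Sht_{G,b,b',\Spd\breve{E}}^{\mu}\times_{\Spd\breve{E}}\Sht_{Z^0,a,a',\Spd\breve{E}}^{\lambda}
 \lra
 \Sht_{G,ab,a'b',\Spd\breve{E}}^{\mu-\lambda}\times_{\Spd\breve{E}}\Sht_{Z^0,a,a',\Spd\breve{E}}^{\lambda}
\]
is an isomorphism of diamonds over $\Spd\breve{E}$, with inverse built from $t_{ab,a'b',a^{-1},a'^{-1}}^{\mu-\lambda,-\lambda}$ and the isomorphism $\Sht_{Z^0,a,a'}^{\lambda}\simeq\Sht_{Z^0,a^{-1},a'^{-1}}^{-\lambda}$ induced by $z\mapsto z^{-1}$ on $Z^0$-bundles. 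Composing $\Phi$ with the first projection identifies $t_{b,b',a,a'}^{\mu,\lambda}$ with the base change along $\Sht_{G,ab,a'b'}^{\mu-\lambda}\to\Spd\breve{E}$ of $\pi_{Z^0,a,a'}^{\lambda}\colon\Sht_{Z^0,a,a'}^{\lambda}\to\Gr_{Z^0,\Spd\breve{E},\leq\lambda}^{\mathrm{tw}}$; since $Z^0$ is a torus, $\Gr_{Z^0,\Spd\breve{E},\leq\lambda}^{\mathrm{tw}}\simeq\Spd\breve{E}$ and $\wt{J}_{a'}^{>0}$ is trivial, so this morphism is a $\underline{Z^0(F)}$-torsor; hence so is $t_{b,b',a,a'}^{\mu,\lambda}$. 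Along the way one records that $\Phi$ is equivariant for $J_b(F)\times J_{b'}(F) = J_{ab}(F)\times J_{a'b'}(F)$, for $\wt{J}_b^{>0}\simeq\wt{J}_{ab}^{>0}$ and $\wt{J}_{b'}^{>0}\simeq\wt{J}_{a'b'}^{>0}$, and for the Weil descent data of \S\ref{sec:Twi} (which are defined through the maps $t_{m,b}$ and are manifestly compatible with twisting by a fixed central bundle); tracing through $\Phi^{-1}$ one moreover sees that the deck group of the torsor $t_{b,b',a,a'}^{\mu,\lambda}$ acts on $\Sht_{G,b,b'}^{\mu}\times\Sht_{Z^0,a,a'}^{\lambda}$ anti-diagonally, through the center of $J_{b'}(F)$ on the first factor and through $\wt{J}_{a'}$ on the second, and that this is precisely the copy of $Z^0(F)$ appearing in the statement.

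Next I would identify the sheaves. The morphism $t_{b,b',a,a'}^{\mu,\lambda}$ lies over the twist morphism of $B_{\mathrm{dR}}^+$-Grassmannians $\Gr_{G,\Spd\breve{E},\leq\mu}^{\mathrm{tw}}\times_{\Spd\breve{E}}\Gr_{Z^0,\Spd\breve{E},\leq\lambda}^{\mathrm{tw}}\to\Gr_{G,\Spd\breve{E},\leq\mu-\lambda}^{\mathrm{tw}}$, and $\mathrm{IC}_{\mu}$, $\mathrm{IC}_{\lambda}$, $\mathrm{IC}_{\mu-\lambda}$ are pulled back from there. Using the monoidality of $\cS$ (\cite[Proposition VI.10.2]{FaScGeomLLC}) and the fact that, $\lambda$ being central, the geometric Satake object attached to $\mu$ is the one attached to $\mu-\lambda$ twisted by the line bundle corresponding to $\lambda$, one obtains a canonical isomorphism $(t_{b,b',a,a'}^{\mu,\lambda})^{*}\mathrm{IC}_{\mu-\lambda}\simeq\mathrm{IC}_{\mu}\boxtimes_{\Spd\breve{E}}\mathrm{IC}_{\lambda}$. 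This is the sheaf-theoretic shadow of the fact, recalled in \S\ref{sec:Twi}, that the twist morphism has its origin in the twist of a vector bundle by a line bundle.

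Finally I would combine the two. Base-changing everything along the fixed untilt $\Spa\bC_p^{\flat}\to\Spd\breve{E}$ and applying Lemma~\ref{lem:torhom} to the $\underline{Z^0(F)}$-torsor $t_{b,b',a,a'}^{\mu,\lambda}$ and to the structure morphism $\Sht_{G,ab,a'b',\bC_p^{\flat}}^{\mu-\lambda}\to\Spa\bC_p^{\flat}$, then rewriting the left-hand side by the previous step and by the projection formula (\cite[Proposition VII.3.1]{FaScGeomLLC}) as $\bigl(R\Gamma_{\mathrm c}(\Sht_{G,b,b'}^{\mu})\otimes_{\ol{\bQ}_{\ell}}^{\bL}R\Gamma_{\mathrm c}(\Sht_{Z^0,a,a'}^{\lambda})\bigr)\otimes_{\cH(Z^0(F))}^{\bL}\ol{\bQ}_{\ell}$, gives the asserted isomorphism; the equivariances from the first step show that it respects the $J_b(F)\times J_{b'}(F)\times W_E$-actions. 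The delicate point — and the only real obstacle — is the bookkeeping in the first step: one must verify that the torsor structure on $t_{b,b',a,a'}^{\mu,\lambda}$ is governed by the anti-diagonal $Z^0(F)$ (and not some other copy) and that $\wt{J}_b^{>0}$, $\wt{J}_{b'}^{>0}$, $\wt{J}_a$, $\wt{J}_{a'}$ and the Weil descent data all interact with the twist exactly as needed, since this matching is what makes the single $\otimes_{Z^0(F)}^{\bL}$ in the statement come out right; by contrast, that $t_{b,b',a,a'}^{\mu,\lambda}$ is a torsor and that the $\mathrm{IC}$-sheaves correspond are comparatively formal.
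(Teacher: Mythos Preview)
Your proposal is correct and follows exactly the route the paper takes: the paper's proof is the single sentence ``This follows from Lemma~\ref{lem:torhom} and that $t_{b,b',a,a'}^{\mu,\lambda}$ is a $Z^0(F)$-torsor,'' and you have simply unpacked what that sentence means, including the identification of the $\mathrm{IC}$-sheaves and the equivariance bookkeeping that the paper leaves implicit.
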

\begin{proof}
This follows from Lemma \ref{lem:torhom} and that 
$t_{b,b',a,a'}^{\mu, \lambda}$ is 
a $Z^0(F)$-torsor. 
\end{proof}

\section{Formula on cohomology}\label{sec:coh}
Let $b_0 ,\ldots , b_m \in G(\breve{F})$ and 
$\mu_1 , \ldots , \mu_m \in X_* (T)^+$. 
Let $E$ be a finite extension of $F$ in $\bC_p$ 
containing $E_i$ for $1 \leq i \leq m$. 
Let 
\[
 m_{b_{\bullet},\mu_{\bullet}} \colon 
 \Sht_{b_0,b_1,\Spd \breve{E}}^{\mu_1} \times_{\Spd \breve{E}} \cdots 
 \times_{\Spd \breve{E}} \Sht_{b_{m-1},b_m,\Spd \breve{E}}^{\mu_m} \to 
 \Sht_{b_0,b_m,\Spd \breve{E}}^{\lvert \mu_{\bullet} \rvert} 
\]
by the pullback of 
the convolution morphism 
$m_{b_{\bullet},\mu_{\bullet},U_m}$ defined in \S \ref{sec:conv} 
under the morphism 
\[
 \Spd \breve{E}=\Delta_{m,\Spd \breve{E}} 
 \hookrightarrow (\Spd \breve{E})^m \lra 
 \Spd \breve{E_1} \times \cdots \times \Spd \breve{E_m}. 
\] 
The morphism $m_{b_{\bullet},\mu_{\bullet}}$ 
coincides with 
the morphism defined by 
the composition of modifications. 
This induces 
\[
 \overline{m}_{b_{\bullet},\mu_{\bullet}} \colon 
 (\Sht_{b_0,b_1,\Spd \breve{E}}^{\mu_1} \times_{\Spd \breve{E}} \cdots 
 \times_{\Spd \breve{E}} \Sht_{b_{m-1},b_m,\Spd \breve{E}}^{\mu_m})/
 (\wt{J}_{b_1} \times \cdots \times \wt{J}_{b_{m-1}}) \to 
 \Sht_{b_0,b_m,\Spd \breve{E}}^{\lvert \mu_{\bullet} \rvert}, 
\]
where $\wt{J}_{b_i}$ for $1 \leq i \leq m-1$ 
acts diagonally on the factor 
\[
 \Sht_{b_{i-1},b_i,\Spd \breve{E}}^{\mu_i} \times_{\Spd \breve{E}} 
 \Sht_{b_i,b_{i+1},\Spd \breve{E}}^{\mu_{i+1}} 
\]
and trivially on the other factors. 

Let 
\[
 \wt{\mathrm{Gr}}_{G,\Spd \breve{E},\leq \mu_{\bullet}} 
 \xrightarrow{m_{\mu_{\bullet}}} 
 \mathrm{Gr}_{G,\Spd \breve{E},\leq \lvert \mu_{\bullet} \rvert} 
\]
be the pullback of 
\[
 m_{\mu_{\bullet}} \colon 
 \wt{\Gr}_{G,\Spd E_1 \times \cdots \times \Spd E_m,\leq \mu_{\bullet}} 
 \lra 
 \Gr_{G,\Spd E_1 \times \cdots \times \Spd E_m,\leq \mu_{\bullet}} 
\]
under 
\[
 \Spd \breve{E} =\Delta_{m,\Spd \breve{E}} \hookrightarrow 
 (\Spd \breve{E})^m \lra 
 \Spd E_1 \times \cdots \times \Spd E_m . 
\]
We define  
$m_{\mu_{\bullet},b_0,b_m} \colon 
 \Sht_{b_0,b_m,\Spd \breve{E}}^{\mu_{\bullet}} \to 
 \Sht_{b_0,b_m,\Spd \breve{E}}^{\lvert \mu_{\bullet} \rvert}$ 
by the fiber product 
\[
 \xymatrix{
 \Sht_{b_0,b_m,\Spd \breve{E}}^{\mu_{\bullet}} 
 \ar[rr]^-{m_{\mu_{\bullet},b_0,b_m}} \ar[d] & &
 \Sht_{b_0,b_m,\Spd \breve{E}}^{\lvert \mu_{\bullet} \rvert} \ar[d] \\ 
 \wt{\mathrm{Gr}}_{G,\Spd \breve{E},\leq \mu_{\bullet}} 
 \ar[rr]^-{m_{\mu_{\bullet}}} & & 
 \mathrm{Gr}_{G,\Spd \breve{E},\leq \lvert \mu_{\bullet} \rvert} . 
 }
\]
Then 
$\Sht_{b_0,b_m}^{\mu_{\bullet}}$ is a moduli space of 
modifications 
\[
 \sE_{b_0} \xrightarrow{f_1} \sE_1 \xrightarrow{f_2} \cdots 
 \xrightarrow{f_{m-1}} 
 \sE_{m-1} \xrightarrow{f_m} \sE_{b_m} 
\]
at $S^{\sharp}$ 
such that $f_i$ is bounded by $\mu_i$ for $1 \leq i \leq m$. 
We define a subspace 
$\Sht_{b_0,b_m,\Spd \breve{E}}^{b_1, \ldots, b_{m-1}, \mu_{\bullet}} \subset 
 \Sht_{b_0,b_m,\Spd \breve{E}}^{\mu_{\bullet}}$ as 
a moduli space of modifications 
\[
 \sE_{b_0} \xrightarrow{f_1} \sE_1 \xrightarrow{f_2} \cdots 
 \xrightarrow{f_{m-1}} 
 \sE_{m-1} \xrightarrow{f_m} \sE_{b_m} 
\]
at $S^{\sharp}$ 
such that 
$f_i$ is bounded by $\mu_i$ for $1 \leq i \leq m$ 
and 
$\sE_i$ is isomorphic to 
$\sE_{b_i}$ geometric fiberwisely 
for $1 \leq i \leq m-1$. 

We put 
\[
 I_{b_0,b_m}^{\mu_{\bullet}} = 
 \{ ([b_1], \ldots, [b_{m-1}]) \in B(G)^{m-1} \mid 
 \Sht_{b_{i-1},b_i}^{\mu_i} \neq \emptyset 
 \ \textrm{for} \ 1 \leq i \leq m \}. 
\]
We take $\mu_{m+1}$ such that 
$[b_m] \in B(G,\mu_{m+1},[1])$. 
Then $I_{b_0,b_m}^{\mu_{\bullet}}$ 
is a finite set, since it is contained in 
$\prod_{1 \leq i \leq m-1} B(G,\sum_{j=i+1}^{m+1} \mu_j ,[1] )$ 
by Lemma \ref{lem:Mempty}. 
For $\lambda \in X_*(T)^+/\Gamma_F$, we put 
\[
 V_{\mu_{\bullet}}^{\lambda} = 
 \Hom_{{}^L G} (V_{\lambda},\bigotimes_{1 \leq i \leq m} V_{\mu_i}) . 
\]
For 
$([b_i] )_{1 \leq i \leq m-1} \in I_{b_0,b_m}^{\mu_{\bullet}}$, 
we put 
$N_{b_{\bullet}}=\sum_{1 \leq i \leq m-1}N_{b_i}$. 
We write $\Gr_{G,\Spd E,\leq \mu}^{(\infty)}$ 
for the inverse image of 
$\Gr_{G,\Spd E,\leq \mu}$ 
under $LG_{\Spd E} \to \Gr_{G,\Spd E}$. 

\begin{prop}\label{prop:decomp}
The sum 
\[
 \sum_{\lambda \in X_*(T)^+/\Gamma} V_{\mu_{\bullet}}^{\lambda} \otimes^{\bL} 
 R\Gamma_{\mathrm{c}} (\Sht_{b_0,b_m}^{\lambda}) 
\]
is decomposed into 
\[
 \left( \bigotimes_{1 \leq i \leq m}  R\Gamma_{\mathrm{c}} 
 (\Sht_{b_{i-1},b_i}^{\mu_i}) \otimes^{\bL} 
 \bigotimes_{1 \leq i \leq m-1} \delta_{b_i}  
 \right) \otimes_{\prod_{i=1}^{m-1} G_{b_i}(F)}^{\bL} \Lambda 
 [2N_{b_{\bullet}}] 
\]
for 
$([b_i] )_{1 \leq i \leq m-1} \in I_{b_0,b_m}^{\mu_{\bullet}}$ 
by distinguished triangles 
in the derived category of representations of 
$G_{b_0}(F) \times G_{b_m}(F) \times W_E$. 
\end{prop}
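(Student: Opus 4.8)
The plan is to compute $R\Gamma_{\mathrm{c}}$ of the $m$-legged space $\Sht_{b_0,b_m}^{\mu_{\bullet}}$ with coefficients in the pullback $\mathrm{IC}_{\mu_{\bullet}}$ of the twisted external product $\mathrm{IC}_{\mu_1}\boxtimes\cdots\boxtimes\mathrm{IC}_{\mu_m}$ along $\Sht_{b_0,b_m}^{\mu_{\bullet}}\to\wt{\mathrm{Gr}}_{G,\Spd\breve{E},\leq\mu_{\bullet}}$, in two ways, and to compare the results. For the first, note that $m_{\mu_{\bullet},b_0,b_m}$ is the base change of the proper convolution morphism $m_{\mu_{\bullet}}$ along $\Sht_{b_0,b_m}^{\lvert\mu_{\bullet}\rvert}\to\mathrm{Gr}_{G,\Spd\breve{E},\leq\lvert\mu_{\bullet}\rvert}$, so by base change $(m_{\mu_{\bullet},b_0,b_m})_{\natural}\mathrm{IC}_{\mu_{\bullet}}$ is the pullback of $(m_{\mu_{\bullet}})_{\natural}$ of the twisted external product, i.e.\ of the convolution product $\mathrm{IC}_{\mu_1}\star\cdots\star\mathrm{IC}_{\mu_m}$. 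By the symmetric monoidality of $\cS$ this is $\cS(V_{\mu_1}\otimes\cdots\otimes V_{\mu_m})$, which by the decomposition $V_{\mu_1}\otimes\cdots\otimes V_{\mu_m}\simeq\bigoplus_{\lambda}V_{\mu_{\bullet}}^{\lambda}\otimes V_{\lambda}$ of $\wh{G}\rtimes Q$-representations equals $\bigoplus_{\lambda}V_{\mu_{\bullet}}^{\lambda}\otimes\mathrm{IC}_{\lambda}$; pushing forward to a point, and using that $\mathrm{IC}_{\lambda}$ on $\Sht_{b_0,b_m}^{\lvert\mu_{\bullet}\rvert}$ is supported on the subspace $\Sht_{b_0,b_m}^{\lambda}$, we obtain
\[
 R\Gamma_{\mathrm{c}}(\Sht_{b_0,b_m}^{\mu_{\bullet}},\mathrm{IC}_{\mu_{\bullet}})\;\simeq\;\bigoplus_{\lambda\in X_*(T)^+/\Gamma_F}V_{\mu_{\bullet}}^{\lambda}\otimes R\Gamma_{\mathrm{c}}(\Sht_{b_0,b_m}^{\lambda})
\]
as representations of $J_{b_0}(F)\times J_{b_m}(F)\times W_E$; this is the sum to be decomposed.

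For the second computation, view $\Sht_{b_0,b_m}^{\mu_{\bullet}}$ as the moduli of chains $\sE_{b_0}\to\sE_1\to\cdots\to\sE_{m-1}\to\sE_{b_m}$ and stratify it by the geometric-fibrewise isomorphism types of the intermediate $\sE_i$: the locally closed subspaces $\Sht_{b_0,b_m}^{b_1,\dots,b_{m-1},\mu_{\bullet}}$ of \S\ref{sec:coh} form a finite stratification whose non-empty strata are indexed by $I_{b_0,b_m}^{\mu_{\bullet}}$. The successive open--closed decompositions give, $J_{b_0}(F)\times J_{b_m}(F)\times W_E$-equivariantly, distinguished triangles expressing $R\Gamma_{\mathrm{c}}(\Sht_{b_0,b_m}^{\mu_{\bullet}},\mathrm{IC}_{\mu_{\bullet}})$ through the $R\Gamma_{\mathrm{c}}$ of the strata with $\mathrm{IC}_{\mu_{\bullet}}$ restricted. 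For fixed $([b_i])\in I_{b_0,b_m}^{\mu_{\bullet}}$, choosing trivialisations $\sE_i\simeq\sE_{b_i}$ v-locally on the stratum --- whose sheaf of choices is a $\wt{J}_{b_i}$-torsor over the stratum --- identifies the stratum with the source
\[
 \bigl(\Sht_{b_0,b_1}^{\mu_1}\times_{\Spd\breve{E}}\cdots\times_{\Spd\breve{E}}\Sht_{b_{m-1},b_m}^{\mu_m}\bigr)\big/\bigl(\wt{J}_{b_1}\times\cdots\times\wt{J}_{b_{m-1}}\bigr)
\]
of $\overline{m}_{b_{\bullet},\mu_{\bullet}}$, under which $\mathrm{IC}_{\mu_{\bullet}}$ becomes the descent of the external product $\mathrm{IC}_{\mu_1}\boxtimes\cdots\boxtimes\mathrm{IC}_{\mu_m}$.

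It then remains to compute the cohomology of this stratum. Writing the quotient as a quotient by $\prod_{i=1}^{m-1}\wt{J}_{b_i}^{>0}$ followed by one by $\prod_{i=1}^{m-1}J_{b_i}(F)$, using $\wt{J}_{b_i}=\wt{J}_{b_i}^{>0}\rtimes\ul{J_{b_i}(F)}$, the Künneth formula for the pushforwards $(-)_{\natural}$ identifies the $R\Gamma_{\mathrm{c}}$ of the product with $\bigotimes_{1\leq i\leq m}R\Gamma_{\mathrm{c}}(\Sht_{b_{i-1},b_i}^{\mu_i})$; a relative version of Lemma \ref{lem:Jbnat} --- applicable since $\mathrm{IC}_{\mu_{\bullet}}$ is pulled back from the $\wt{J}^{>0}$-quotient --- shows that passing to that quotient has the effect of the shift $[2N_{b_{\bullet}}]$; and Lemma \ref{lem:torhom} shows that passing to the $\prod_{i=1}^{m-1}J_{b_i}(F)$-quotient has the effect of $\otimes_{\prod_{i=1}^{m-1}J_{b_i}(F)}^{\bL}\ol{\bQ}_{\ell}$. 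Therefore the $([b_i])$-stratum contributes exactly
\[
 \Bigl(\bigotimes_{1\leq i\leq m}R\Gamma_{\mathrm{c}}(\Sht_{b_{i-1},b_i}^{\mu_i})\Bigr)\otimes_{\prod_{i=1}^{m-1}J_{b_i}(F)}^{\bL}\ol{\bQ}_{\ell}\,[2N_{b_{\bullet}}],
\]
and combining this with the first computation proves the proposition.

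The main obstacle is the geometric identification of the strata: that $\Sht_{b_0,b_m}^{b_1,\dots,b_{m-1},\mu_{\bullet}}$ is precisely the quotient $(\prod_i\Sht_{b_{i-1},b_i}^{\mu_i})/(\prod_i\wt{J}_{b_i})$ --- equivalently that the isomorphisms $\sE_i\simeq\sE_{b_i}$ form a $\wt{J}_{b_i}$-torsor v-locally --- and that this identification is compatible both with $m_{\mu_{\bullet},b_0,b_m}$ and with the IC sheaves; here it is convenient to work on the level of the $LG$-torsors $\mathrm{Gr}^{(\infty)}$, where the twisted products trivialise. A secondary but genuinely delicate point is the bookkeeping of twists and of equivariance: as in Lemma \ref{lem:Jbnat} the Weil descent datum absorbs the Tate twist coming from Lemma \ref{lem:cohAD}, so the $\wt{J}^{>0}$-quotient contributes the plain shift $[2N_{b_{\bullet}}]$ rather than a Tate twist, and one must verify that exactly the $J_{b_0}(F)\times J_{b_m}(F)\times W_E$-action survives the quotients, the Künneth identification over $\Spd\breve{E}$, and the derived tensor products.
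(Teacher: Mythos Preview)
Your proposal is correct and follows essentially the same route as the paper: compute $R\Gamma_{\mathrm{c}}(\Sht_{b_0,b_m}^{\mu_{\bullet}},\mathrm{IC}_{\mu_{\bullet}})$ via proper base change and geometric Satake to obtain the $\lambda$-sum, then stratify by the isomorphism types of the intermediate bundles, identify each stratum with the $\wt{J}_{b_i}$-quotient of the product via $\overline{m}_{b_{\bullet},\mu_{\bullet}}$, and use Lemma~\ref{lem:torhom} and Lemma~\ref{lem:Jbnat} together with the $\Gr^{(\infty)}$-diagram for the IC-sheaf compatibility. The obstacles you flag --- the stratum identification and the twist/shift bookkeeping --- are exactly the points the paper addresses.
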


\begin{proof}
Let $\mathrm{IC}_{\mu_{\bullet}}$ be the 
external twisted product of 
$\IC_{\mu_1}, \ldots ,\IC_{\mu_m}$ 
on 
$\wt{\mathrm{Gr}}_{\Spd \breve{E},\leq \mu_{\bullet}}$. 
By the construction of convolution product \cite[VI.8]{FaScGeomLLC} in geometric Satake equivalence and \cite[Proposition VII.4.3]{FaScGeomLLC}, 
we have 
\[
 (m_{\mu_{\bullet}})_{\natural} \mathrm{IC}_{\mu_{\bullet}} = 
 \sum_{\lambda \in X_*(T)^+/\Gamma} V_{\mu_{\bullet}}^{\lambda} \otimes^{\bL} 
 \mathrm{IC}_{\lambda}. 
\]
Hence the sum 
\[
 \sum_{\lambda \in X_*(T)^+/\Gamma} V_{\mu_{\bullet}}^{\lambda} \otimes^{\bL} 
 R\Gamma_{\mathrm{c}} (\Sht_{b_0,b_m}^{\lambda}) 
\]
is isomorphic to 
$R\Gamma_{\mathrm{c}} (\Sht_{b_0,b_m}^{\mu_{\bullet}}, 
 \mathrm{IC}_{\mu_{\bullet}})$. 
 
We put $\mu'_{\bullet}=(\mu_1,\ldots ,\mu_{m-2})$. 
Let $\{ [b_{m-1}^j] \}_{1 \leq j \leq n}$ be the image of the projection 
$I_{b_0,b_m}^{\mu_{\bullet}} \to B(G)$ to the $(m-1)$-th component. 
It suffices to show that 
$R\Gamma_{\mathrm{c}} (\Sht_{b_0,b_m}^{\mu_{\bullet}}, 
\mathrm{IC}_{\mu_{\bullet}})$ is decomposed into 
\begin{equation}\label{eq:decm-1}
\left(R\Gamma_{\mathrm{c}} 
(\Sht_{b_{0},b_{m-1}^j}^{\mu'_{\bullet}}) \otimes^{\bL} 
R\Gamma_{\mathrm{c}} 
(\Sht_{b_{m-1}^j,b_m}^{\mu_m}) 
\otimes^{\bL} \delta_{b_{m-1}^j} 
\right) \otimes_{G_{b_{m-1}^j}(F)}^{\bL} \Lambda 
[2N_{b_{m-1}^j}] 
\end{equation}
for $1 \leq j \leq n$. 

Let $K \subset G_{b_0}(F)$ be enough small compact open subgroup. 
Then 
\[
 R\Gamma_{\mathrm{c}} (\Sht_{b_0,K,b_m}^{\mu_{\bullet}}, 
 \mathrm{IC}_{\mu_{\bullet}}) \cong 
 t_{b_m}^* i_{b_m}^* T_{\mu_{\bullet}} i_{b_0,!} (h_{K,!} \Lambda)
 \cong 
 t_{b_m}^* i_{b_m}^* T_{\mu_{m-1}}
 T_{\mu'_{\bullet}} i_{b_0,!} (h_{K,!} \Lambda)
\] 
is decomposed into 
\begin{equation*}
  t_{b_m}^* i_{b_m}^* T_{\mu_{m-1}} 
 i_{b_{m-1}^j,!} i_{b_{m-1}^j}^*
 T_{\mu'_{\bullet}} i_{b_0,!} (h_{K,!} \Lambda)
\end{equation*}
for $1 \leq j \leq n$ by 
Lemma \ref{lem:jjss} \ref{en:jjiiBun}. 
This is isomorphic to 
\begin{equation}\label{eq:Tm-1mu'}
  t_{b_m}^* i_{b_m}^* T_{\mu_{m-1}} 
i_{b_{m-1}^j,!} \bigl( \delta_{b_{m-1}^j} [2N_{b_{m-1}^j}] \otimes^{\bL} h_{b_{m-1}^j,!} h_{b_{m-1}^j}^* i_{b_{m-1}^j}^*
T_{\mu'_{\bullet}} i_{b_0,!} (h_{K,!} \Lambda) \bigr)
\end{equation}
by Lemma \ref{lem:Jbnat}. 
By Lemma \ref{lem:torhom}, 
\[
 h_{b_{m-1}^j}^* i_{b_{m-1}^j}^*
 T_{\mu'_{\bullet}} i_{b_0,!} (h_{K,!} \Lambda) \cong 
 \left( \Bigl( \varinjlim_{K' \subset G_{b_{m-1}^j}(F)} h_{K',\natural} \Lambda \Bigr) \otimes^{\bL} R\Gamma_{\mathrm{c}} 
 (\Sht_{b_{0},K,b_{m-1}^j}^{\mu'_{\bullet}}) \right) \otimes_{\cH(G_{b_{m-1}^j}(F))}^{\bL} \Lambda . 
\]
Hence \eqref{eq:Tm-1mu'} is isomorphic to 
\begin{align*}
\left(R\Gamma_{\mathrm{c}} 
(\Sht_{b_{0},K,b_{m-1}^j}^{\mu'_{\bullet}}) \otimes^{\bL} 
R\Gamma_{\mathrm{c}} 
(\Sht_{b_{m-1}^j,b_m}^{\mu_m}) 
\otimes^{\bL} \delta_{b_{m-1}^j} 
\right) \otimes_{G_{b_{m-1}^j}(F)}^{\bL} \Lambda 
[2N_{b_{m-1}^j}] 
\end{align*}
since $t_{b_m}^* i_{b_m}^* T_{\mu_{m-1}} 
i_{b_{m-1}^j,!}$ commutes with direct limits, tensors and changes of coefficients. 
Therefore we obtain the claim. 
\end{proof}

\begin{cor}\label{cor:formula}
We have 
\begin{align*}
 \sum_{([b_i]
 )_{1 \leq i \leq m-1} \in I_{b_0,b_m}^{\mu_{\bullet}}} 
 H_* 
 \left( \prod_{i=1}^{m-1} G_{b_i}(F), 
 \bigotimes_{1 \leq i \leq m}  H_{\mathrm{c}}^* 
 (\Sht_{b_{i-1},b_i}^{\mu_i}) \otimes^{\bL} 
 \bigotimes_{1 \leq i \leq m-1} \delta_{b_i} 
 \right) & \\ 
 =
 \sum_{\lambda \in X_*(T)^+/\Gamma} V_{\mu_{\bullet}}^{\lambda} \otimes^{\bL} & 
 H_{\mathrm{c}}^* 
 (\Sht_{b_0,b_m}^{\lambda}) 
\end{align*}
as virtual representations of 
$G_{b_0}(F) \times G_{b_m}(F) \times W_E$. 
\end{cor}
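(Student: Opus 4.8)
The plan is to obtain the formula as the decategorification of Proposition \ref{prop:decomp}, by passing to Grothendieck groups and taking Euler characteristics of cohomology.

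First I would recall that Proposition \ref{prop:decomp} exhibits $\sum_{\lambda\in X_*(T)^+/\Gamma} V_{\mu_\bullet}^\lambda\otimes R\Gamma_{\mathrm{c}}(\Sht_{b_0,b_m}^\lambda)$ as obtained, via a finite sequence of distinguished triangles in the derived category of representations of $J_{b_0}(F)\times J_{b_m}(F)\times W_E$, from the objects $\bigl(\bigotimes_{1\le i\le m} R\Gamma_{\mathrm{c}}(\Sht_{b_{i-1},b_i}^{\mu_i})\bigr)\otimes^{\bL}_{\prod_{i=1}^{m-1}J_{b_i}(F)}\ol{\bQ}_\ell[2N_{b_\bullet}]$ indexed by $([b_i])_{1\le i\le m-1}\in I_{b_0,b_m}^{\mu_\bullet}$. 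Since the class of an object in the Grothendieck group of a triangulated category is additive along distinguished triangles, this yields the identity $\bigl[\sum_\lambda V_{\mu_\bullet}^\lambda\otimes R\Gamma_{\mathrm{c}}(\Sht_{b_0,b_m}^\lambda)\bigr]=\sum_{([b_i])\in I_{b_0,b_m}^{\mu_\bullet}}\bigl[\bigl(\bigotimes_i R\Gamma_{\mathrm{c}}(\Sht_{b_{i-1},b_i}^{\mu_i})\bigr)\otimes^{\bL}_{\prod_i J_{b_i}(F)}\ol{\bQ}_\ell[2N_{b_\bullet}]\bigr]$ in that Grothendieck group. The sum over $\lambda$ is finite because $V_{\mu_\bullet}^\lambda=\Hom_{{}^LG}(V_\lambda,\bigotimes_i V_{\mu_i})$ vanishes for all but finitely many $\lambda$, and the sum over $I_{b_0,b_m}^{\mu_\bullet}$ is finite as recalled before Proposition \ref{prop:decomp}.

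Next I would rewrite both sides in terms of the virtual representations $H_{\mathrm{c}}^*$. On the left, $V_{\mu_\bullet}^\lambda$ is a fixed multiplicity space, so passing to the Euler characteristic of cohomology sheaves sends $V_{\mu_\bullet}^\lambda\otimes R\Gamma_{\mathrm{c}}(\Sht_{b_0,b_m}^\lambda)$ to $V_{\mu_\bullet}^\lambda\otimes H_{\mathrm{c}}^*(\Sht_{b_0,b_m}^\lambda)$ by the definition of $H_{\mathrm{c}}^*$. On the right, the even shift $[2N_{b_\bullet}]$ contributes $(-1)^{2N_{b_\bullet}}=1$ and disappears; the derived tensor product $\bigotimes_i R\Gamma_{\mathrm{c}}(\Sht_{b_{i-1},b_i}^{\mu_i})$ over $\ol{\bQ}_\ell$ has Euler characteristic $\bigotimes_i H_{\mathrm{c}}^*(\Sht_{b_{i-1},b_i}^{\mu_i})$ by the Künneth isomorphism (no higher Tor over a field); and $-\otimes^{\bL}_{\prod_{i=1}^{m-1}J_{b_i}(F)}\ol{\bQ}_\ell$ is, by its construction as in Lemma \ref{lem:torhom}, the derived coinvariants functor for $\prod_{i=1}^{m-1}J_{b_i}(F)$, hence computes the group homology $H_j\bigl(\prod_{i=1}^{m-1}J_{b_i}(F),-\bigr)$. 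A hyperhomology spectral sequence then identifies the Euler characteristic of $\bigl(\bigotimes_i R\Gamma_{\mathrm{c}}(\Sht_{b_{i-1},b_i}^{\mu_i})\bigr)\otimes^{\bL}_{\prod_i J_{b_i}(F)}\ol{\bQ}_\ell$ with $\sum_j(-1)^j H_j\bigl(\prod_{i=1}^{m-1}J_{b_i}(F),\bigotimes_i H_{\mathrm{c}}^*(\Sht_{b_{i-1},b_i}^{\mu_i})\bigr)$, which is exactly what $H_*\bigl(\prod_{i=1}^{m-1}J_{b_i}(F),-\bigr)$ abbreviates in the statement. Assembling these computations gives the asserted equality of virtual representations of $J_{b_0}(F)\times J_{b_m}(F)\times W_E$.

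The only genuinely delicate point is not an obstacle but bookkeeping: one must work throughout in a Grothendieck group in which Euler characteristics and the group homology of the relevant representations are defined, so one needs the cohomology groups $R^j\Gamma_{\mathrm{c}}(\Sht_{b_{i-1},b_i}^{\mu_i})$ and their group homology to lie in a suitable abelian category of admissible (or finite-length) representations of the $J_b(F)$ carrying compatible finite-dimensional $W_E$-actions, so that the spectral sequences above collapse to well-defined finite alternating sums. Granting this, the proof is a formal decategorification of Proposition \ref{prop:decomp}.
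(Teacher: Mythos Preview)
Your proposal is correct and follows exactly the paper's approach: the paper's proof is the single sentence ``This follows from Proposition \ref{prop:decomp} by taking cohomology,'' and your write-up is a careful unpacking of precisely that decategorification step.
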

\begin{proof}
This follows from 
Proposition \ref{prop:decomp} by taking cohomology. 
\end{proof}

\begin{lem}\label{lem:RHom}
Assume that $m=2$. 
Let $\pi$ be a smooth representation of $G_{b_0} (F)$. 
Then we have 
\begin{align*}
 R\Hom_{G_{b_0}(F)} & \left( (R\Gamma_{\mathrm{c}} (\Sht_{b_0,b_1}^{\mu_1}) 
 \otimes^{\bL} 
 R\Gamma_{\mathrm{c}} (\Sht_{b_1,b_2}^{\mu_2}) \otimes^{\bL} \delta_{b_1}) \otimes_{G_{b_1}(F)}^{\bL} 
 \Lambda 
 ,\pi \right) \\ 
 &\simeq R\Hom_{G_{b_1} (F)} \left( 
 R\Gamma_{\mathrm{c}} (\Sht_{b_1,b_2}^{\mu_2}) ,
 R\Hom_{G_{b_0}(F)} 
 \left( R\Gamma_{\mathrm{c}} (\Sht_{b_0,b_1}^{\mu_1}),\pi \right) \otimes^{\bL} \delta_{b_1}^{-1} \right) 
\end{align*}
in the derived category of representations of 
$G_{b_2}(F) \times W_E$ 
for $[b_1] \in I_{b_0,b_2}^{(\mu_1,\mu_2)}$. 
\end{lem}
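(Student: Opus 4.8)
The plan is to reduce the statement to the derived tensor--Hom adjunction. Write $A = R\Gamma_{\mathrm{c}}(\Sht_{b_0,b_1}^{\mu_1})$ and $B = R\Gamma_{\mathrm{c}}(\Sht_{b_1,b_2}^{\mu_2})$. By the constructions in \S\ref{sec:Twi}, $A$ is an object of the derived category of smooth $\ol{\bQ}_{\ell}$-representations of $J_{b_0}(F) \times J_{b_1}(F) \times W_E$, and $B$ one of smooth representations of $J_{b_1}(F) \times J_{b_2}(F) \times W_E$; equivalently, $A$ and $B$ are complexes of modules over the Hecke algebras $\cH(J_{b_0}(F)) \otimes_{\ol{\bQ}_{\ell}} \cH(J_{b_1}(F))$ and $\cH(J_{b_1}(F)) \otimes_{\ol{\bQ}_{\ell}} \cH(J_{b_2}(F))$, each with a commuting $W_E$-action. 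Hence $A \otimes_{\ol{\bQ}_{\ell}} B$ carries commuting actions of $J_{b_0}(F)$, of two copies of $J_{b_1}(F)$, of $J_{b_2}(F)$ and of $W_E$ (acting diagonally on the two factors), and $(A \otimes B) \otimes_{J_{b_1}(F)}^{\bL} \ol{\bQ}_{\ell}$ is the derived coinvariants for the diagonal copy of $J_{b_1}(F)$.

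The main step is then the derived tensor--Hom adjunction: for rings $R = \cH(J_{b_1}(F))$, $S = \cH(J_{b_2}(F))$, a complex $M$ of right $R$-modules, a complex $N$ of $(R,S)$-bimodules and a complex $P$ of right $S$-modules, one has a functorial isomorphism
\[
 R\Hom_{S}\bigl( M \otimes_{R}^{\bL} N,\, P \bigr) \;\simeq\; R\Hom_{R}\bigl( M,\, R\Hom_{S}(N,P) \bigr),
\]
which follows from the adjunction between $- \otimes_{R}^{\bL} N$ and $R\Hom_{S}(N,-)$ (or, concretely, from bar resolutions). I would apply this with $M = A$, $N = B$ and $P = \pi$, where $R$ acts through the $J_{b_1}(F)$-factors and $S$ through the $J_{b_2}(F)$-factors. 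The remaining structures are carried along formally: the $J_{b_0}(F)$-action on $A$ commutes with $R$ and $S$, and the $W_E$-action is the diagonal one coming from $A$ and from $B$ (note $\pi$ is $W_E$-trivial), so every morphism in the adjunction is $J_{b_0}(F) \times W_E$-equivariant. This gives the claimed isomorphism in the derived category of representations of $J_{b_0}(F) \times W_E$. The hypothesis $[b_1] \in I_{b_0,b_2}^{(\mu_1,\mu_2)}$ is not used; it only records the setting in which the lemma is applied.

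The one point that requires care --- and which I expect to be the only real content beyond the formal adjunction --- is compatibility with the smooth structures: one must know that $R\Hom_{J_{b_2}(F)}(B,\pi)$ again carries a smooth $J_{b_1}(F)$-action so that the outer $R\Hom_{J_{b_1}(F)}(A,-)$ is meaningful, and that the relevant derived functors are computed by the resolutions entering the adjunction. I would handle this by working throughout in the derived categories of $\cH(-)$-modules (equivalently, in the solid framework $D_{\blacksquare}$ of \S\ref{ssec:Relh}, where $\otimes^{\bL}$ and $R\Hom$ over $\cH(J_{b_i}(F))$ are the honest derived functors and the adjunction is automatic), passing back to smooth representations only at the end; alternatively, the finiteness properties of $R\Gamma_{\mathrm{c}}$ of the shtuka diamonds reduce everything to bounded complexes of admissible representations, on which the naive and the smooth Hom's coincide.
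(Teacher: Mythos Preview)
Your proposal is correct and is essentially the same argument as the paper's. The paper writes the adjunction out as a three-step chain
\[
 R\Hom_{J_{b_2}}\bigl((A\otimes B)\otimes^{\bL}_{J_{b_1}}\ol{\bQ}_{\ell},\pi\bigr)
 \simeq R\Hom_{J_{b_1}\times J_{b_2}}(A\otimes B,\ol{\bQ}_{\ell}\boxtimes\pi)
 \simeq R\Hom_{J_{b_1}\times J_{b_2}}(A,\Hom(B,\pi))
 \simeq R\Hom_{J_{b_1}}(A,R\Hom_{J_{b_2}}(B,\pi)),
\]
first trading coinvariants for $R\Hom$ over the extra group, then applying tensor--Hom over $\ol{\bQ}_{\ell}$, then separating the two groups using that $J_{b_2}$ acts trivially on $A$; this is exactly your tensor--Hom adjunction over $\cH(J_{b_1}(F))$ unpacked, and the paper does not discuss the smoothness point you raise.
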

\begin{proof}
We have 
\begin{align*}
 R\Hom_{G_{b_0}(F)} & \left( R\Gamma_{\mathrm{c}} (\Sht_{b_0,b_1}^{\mu_1}) 
 \otimes
 R\Gamma_{\mathrm{c}} (\Sht_{b_1,b_2}^{\mu_2}) \otimes^{\bL} \delta_{b_1} \otimes_{G_{b_1}(F)}^{\bL} 
 \Lambda ,\pi \right) \\ 
 &\simeq R\Hom_{G_{b_0}(F) \times G_{b_1} (F)} 
 \left( R\Gamma_{\mathrm{c}} (\Sht_{b_0,b_1}^{\mu_1}) \otimes
 R\Gamma_{\mathrm{c}} (\Sht_{b_1,b_2}^{\mu_2}) \otimes^{\bL} \delta_{b_1}, 
 \Lambda \boxtimes \pi 
 \right) \\ 
 &\simeq R\Hom_{G_{b_0}(F) \times G_{b_1} (F)} \left( 
 R\Gamma_{\mathrm{c}} (\Sht_{b_1,b_2}^{\mu_2}) \otimes^{\bL} \delta_{b_1}, 
 \Hom \left( R\Gamma_{\mathrm{c}} (\Sht_{b_0,b_1}^{\mu_1}),\pi \right) \right) \\ 
 &\simeq R\Hom_{G_{b_1} (F)} \left( 
 R\Gamma_{\mathrm{c}} (\Sht_{b_1,b_2}^{\mu_2}) ,
 R\Hom_{G_{b_0}(F)} 
 \left( R\Gamma_{\mathrm{c}} (\Sht_{b_0,b_1}^{\mu_1}),\pi \right) \otimes^{\bL} \delta_{b_1}^{-1} \right) 
\end{align*}
in the derived category of representations of 
$G_{b_2}(F) \times W_E$. 
\end{proof}

\section{Duality morphism}\label{sec:Dual}
Assume that $2$ is invertible in $\Lambda$. 
We take a pinning $\cP=(G,B,T,{X_{\alpha}})$ of $G$. 
Then define a duality involution 
$\iota_{G,\cP}$ on $G$ as in 
\cite[Definition 1]{PraMVWinv}. 
We simply write $\iota$ for $\iota_{G,\cP}$. 
Note that $\mu=-\iota \circ \mu$ in 
$X_*(T)/W_G(T) \cong X_*(T)^+$. 
We define an anti-involution 
$\theta$ on $G$ 
by $\theta (g)=\iota(g)^{-1}$.  
We define the duality morphism 
\[
 \theta_{b,b'} \colon 
 \Sht_{G,b,b'}^{\mu} \lra 
 \Sht_{G,\iota(b'),\iota(b)}^{\mu} 
\]
by sending $f \colon \sE_b \to \sE_{b'}$ to 
$\iota(f)^{-1} \colon \sE_{\iota(b')} \to \sE_{\iota(b)}$. 
The above isomorphism is compatible with actions of 
$\wt{J}_b \times \wt{J}_{b'}$ and 
$\wt{J}_{\iota(b')} \times \wt{J}_{\iota(b)}$ 
under the isomorphism 
\[
 \wt{J}_b \times \wt{J}_{b'} \lra 
 \wt{J}_{\iota(b')} \times \wt{J}_{\iota(b)} ;\ 
 (g,g') \mapsto (\iota(g'),\iota(g)). 
\]
Then 
$\theta_{b,\iota(b)}$ is an involution on 
$\Sht_{G,b,\iota(b)}^{\mu}$. 
On the other hand, $\theta$ induces a morphism 
$\theta \colon \cHck_G \to \cHck_G$. 
Let $E$ be the field of definition of $\mu$. 
We have a natural morphism 
\[ 
p_{b,b'}^{\mu} \colon  \Sht_{G,b,b'}^{\mu} \lra \cHck_{G,\Spd \breve{E}}.  
\]
We have the commutative diagram 
\[
\xymatrix{
	\Sht_{G,b,b'}^{\mu} \ar[d]_-{p_{b,b'}^{\mu}} 
	\ar[r]^-{\theta_{b,b'}} & 
	\Sht_{G,\iota(b'),\iota(b)}^{\mu} 
	\ar[d]^-{p_{\iota(b'),\iota(b)}^{\mu}} \\ 
	\cHck_{G,\Spd \breve{E}} 
	\ar[r]^-{\theta} & 
	\cHck_{G,\Spd \breve{E}} . 
}
\]
We have $\cS'(r_{\mu} \circ \ad (\widehat{\rho}(-1)))\cong \theta^* \IC_{\mu}'$ by \cite[Proposition VI.12.1]{FaScGeomLLC}. 
Hence $\widehat{\rho}(-1) \colon r_{\mu} \circ \ad (\widehat{\rho}(-1)) \to r_{\mu}$ induces 
$M_{\mu} \colon \theta^* \IC_{\mu}' \to \IC_{\mu}'$. 
Hence we obtain the isomorphism 
\[
 R\Gamma_{\mathrm{c}} (\Sht_{G,\iota(b'),\iota(b)}^{\mu}) 
 \to R\Gamma_{\mathrm{c}} (\Sht_{G,b,b'}^{\mu}) 
\]
induced by $\theta_{b,b'}$. 

\begin{lem}\label{lem:dualiso}
The isomorphism 
\[
 R\Gamma_{\mathrm{c}} (\Sht_{G,\iota(b'),\iota(b)}^{\mu}) 
 \to 
 R\Gamma_{\mathrm{c}} (\Sht_{G,b,b'}^{\mu}) 
\]
is compatible with actions of 
$\wt{J}_b \times \wt{J}_{b'}$ and 
$\wt{J}_{\iota(b')} \times \wt{J}_{\iota(b)}$ 
under the isomorphism 
\[
 \wt{J}_b \times \wt{J}_{b'} \lra 
 \wt{J}_{\iota(b')} \times \wt{J}_{\iota(b)} ;\ 
 (g,g') \mapsto (\iota(g'),\iota(g)). 
\]
\end{lem}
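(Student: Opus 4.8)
The plan is to reduce the statement to the equivariance of the sheaf-theoretic identification that underlies the cohomology isomorphism, and then to check this identification factor by factor against the two canonical isomorphisms $N_\mu$ and the one of \cite[Lemma 2.24]{ZhuAffGmix}. Recall first that $R\Gamma_{\mathrm{c}}(\Sht_{G,b,b'}^{\mu})=(f_{G,b,b',\bC_p^{\flat}}^{\mu})_{\natural}(\IC_\mu)$, that the structure morphism $f_{G,b,b',\bC_p^{\flat}}^{\mu}$ is invariant under $\wt{J}_b\times\wt{J}_{b'}$, and that $(f_{G,b,b',\bC_p^{\flat}}^{\mu})_{\natural}$ is a functor; hence applying it to an $\wt{J}_b\times\wt{J}_{b'}$-equivariant morphism of sheaves on $\Sht_{G,b,b',\bC_p^{\flat}}^{\mu}$ produces an $\wt{J}_b\times\wt{J}_{b'}$-equivariant morphism of cohomology. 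Since $\theta_{b,b'}$ is an isomorphism over $\Spa\bC_p^{\flat}$ which, as already established, intertwines the actions of $\wt{J}_b\times\wt{J}_{b'}$ and $\wt{J}_{\iota(b')}\times\wt{J}_{\iota(b)}$ via $(g,g')\mapsto(\iota(g'),\iota(g))$, it therefore suffices to show that the isomorphism of sheaves on $\Sht_{G,b,b',\bC_p^{\flat}}^{\mu}$
\begin{align*}
 \theta_{b,b'}^{*}\bigl((\pi_{\iota(b'),\iota(b)}^{\mu})^{*}\IC_\mu\bigr)
 &= (\pi_{b,b'}^{\mu,\mathrm{op}})^{*}\theta^{*}\IC_\mu \\
 &\xrightarrow{(\pi_{b,b'}^{\mu,\mathrm{op}})^{*}N_\mu}\ (\pi_{b,b'}^{\mu,\mathrm{op}})^{*}\IC_\mu^{\mathrm{op}}
 \ \xrightarrow{\ \sim\ }\ (\pi_{b,b'}^{\mu})^{*}\IC_\mu ,
\end{align*}
obtained from the commutative square $\pi_{\iota(b'),\iota(b)}^{\mu}\circ\theta_{b,b'}=\theta\circ\pi_{b,b'}^{\mu,\mathrm{op}}$, is equivariant, where the source carries the equivariant structure transported from that of $(\pi_{\iota(b'),\iota(b)}^{\mu})^{*}\IC_\mu$ along the above group isomorphism.

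Next I would verify this equivariance one arrow at a time, using the descriptions of the equivariant structures on the two pullbacks of the IC sheaf. The morphism $\pi_{b,b'}^{\mu}$ forgets the trivialization $\iota_{(0,r']}$ on which $\wt{J}_{b'}$ acts, so $(\pi_{b,b'}^{\mu})^{*}\IC_\mu$ acquires its $\wt{J}_{b'}$-equivariant structure simply as a pullback along a $\wt{J}_{b'}$-invariant map, while its $\wt{J}_b$-equivariant structure is induced from the $L^+G$-equivariance of $\IC_\mu$ on $\Gr_G$; dually, $\pi_{b,b'}^{\mu,\mathrm{op}}$ forgets $\iota_{[r,\infty)}$, so $(\pi_{b,b'}^{\mu,\mathrm{op}})^{*}\IC_\mu^{\mathrm{op}}$ acquires the analogous structures with the roles of $\wt{J}_b$ and $\wt{J}_{b'}$ interchanged. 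The isomorphism $N_\mu\colon\theta^{*}\IC_\mu\to\IC_\mu^{\mathrm{op}}$ is canonical and $L^+G$-equivariant for the actions on $\Gr_G^{\mathrm{op}}$ and $\Gr_G$ intertwined by $\theta$ (this is part of its construction in \cite[(2.4.2)]{ZhuAffGmix}), hence its pullback along $\pi_{b,b'}^{\mu,\mathrm{op}}$ respects the induced equivariant structures; and the isomorphism $(\pi_{b,b'}^{\mu,\mathrm{op}})^{*}\IC_\mu^{\mathrm{op}}\to(\pi_{b,b'}^{\mu})^{*}\IC_\mu$ of \cite[Lemma 2.24]{ZhuAffGmix} is by construction the comparison of the two pullbacks of the IC sheaf through the two natural maps to $\Gr_G^{\mathrm{op}}$ and $\Gr_G$ attached to a modification $\sE_b\to\sE_{b'}$ at $S^{\sharp}$, and is therefore functorial in that modification. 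Composing these, the displayed isomorphism is $\wt{J}_b\times\wt{J}_{b'}$-equivariant, and applying $(f_{G,b,b',\bC_p^{\flat}}^{\mu})_{\natural}$ gives the lemma.

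The hard part, and the one step that genuinely needs care rather than formal functoriality, is the asymmetry just noted: $\pi_{b,b'}^{\mu}$ and $\pi_{b,b'}^{\mu,\mathrm{op}}$ remember opposite trivializations, so the $\wt{J}_b$- and $\wt{J}_{b'}$-equivariant structures enter the two IC pullbacks in swapped roles, and one must confirm that the comparison of \cite[Lemma 2.24]{ZhuAffGmix} actually interchanges these structures in the way required rather than being merely an abstract isomorphism of underlying sheaves. The way I would handle this is to describe both equivariant structures intrinsically in terms of the actions of $\Aut(\sE_b)=\wt{J}_b$ and $\Aut(\sE_{b'})=\wt{J}_{b'}$ on the modification $\sE_b\to\sE_{b'}$ and on its relative position at $S^{\sharp}$, so that the compatibility of the comparison isomorphism with the change of trivialization on either side becomes tautological; once that intrinsic description is in place, the verification is routine and the equivariance of $N_\mu$ on the Grassmannian side suffices to close the argument.
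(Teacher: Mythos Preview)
Your argument is correct and is essentially the same approach as the paper's, which simply records that the statement ``follows from the definition''; you have spelled out in detail what that one line means by tracing the equivariance through $\theta_{b,b'}$, the pullback of $N_\mu$, and the comparison of \cite[Lemma 2.24]{ZhuAffGmix}. The subtlety you flag about the swapped roles of $\wt{J}_b$ and $\wt{J}_{b'}$ under the two projections is real but, as you indicate, is resolved by the intrinsic description in terms of automorphisms of the modification, which is exactly the content the paper is invoking when it appeals to the definition.
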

\begin{proof}
This follows from the definition. 
\end{proof}

Further, we have an involution 
\[
 \theta_b \colon 
 \Sht_{G,b,1}^{\mu} \times \Sht_{G,1,\iota(b)}^{\mu} \lra 
 \Sht_{G,b,1}^{\mu} \times \Sht_{G,1,\iota(b)}^{\mu};\ 
 (x,x') \mapsto (\theta_{1,\iota(b)}(x'),\theta_{b,1}(x)). 
\]
We have a decomposition 
\[
 V_{\mu} \otimes V_{\mu} =\Sym^2 V_{\mu} \oplus \bigwedge^2 V_{\mu}. 
\]
Let 
\[
 \Psi_{b,\mu} \colon \left(
 R\Gamma_{\mathrm{c}} (\Sht_{b,1}^{\mu}) \otimes 
 R\Gamma_{\mathrm{c}} (\Sht_{1,\iota(b)}^{\mu}) \right) 
 \otimes_{G(F)}^{\bL} \Lambda
 \to 
 \sum_{\lambda \in X_*(T)^+/\Gamma} V_{\mu_{\bullet}}^{\lambda} \otimes  
 R\Gamma_{\mathrm{c}} 
 (\Sht_{b,\iota(b)}^{\lambda}) 
\]
be the morphism given by 
Proposition \ref{prop:decomp}. 
Let $s_{b,\mu}$ be the involution on 
the source of $\Psi_{b,\mu}$ induced by $\theta_b$ 
and the multiplication by $(-1)^{\langle 2\wh{\rho},\mu \rangle}$. 
On the other hand, 
let $t_{b,\mu}$ be the involution on 
the target of $\Psi_{b,\mu}$ induced by 
the permutation 
$\sigma_{V_{\mu},V_{\mu}}$ on $V_{\mu} \otimes V_{\mu}$ 
and 
$\theta_{b,\iota(b)} \colon \Sht_{b,\iota(b)}^{\lambda} \to \Sht_{b,\iota(b)}^{\lambda}$.

\begin{prop}\label{prop:invcomp}
The morphism $\Psi_{b,\mu}$ is compatible 
with the involutions 
$s_{b,\mu}$ and $t_{b,\mu}$. 
\end{prop}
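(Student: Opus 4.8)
The plan is to trace both involutions through the construction of $\Psi_{b,\mu}$, i.e. through the isomorphisms and distinguished triangles appearing in the proof of Proposition \ref{prop:decomp} (applied with $m=2$, $\mu_1=\mu_2=\mu$, $b_0=b$, $b_1=1$, $b_2=\iota(b)$), and to isolate the essential point on the $B_{\mathrm{dR}}^{+}$-Grassmannian, where it becomes the statement that swapping the two legs realizes the commutativity constraint of the geometric Satake equivalence.

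First I would introduce the duality involution $\Theta_{\mu_{\bullet}}$ on $\Sht_{G,b,\iota(b)}^{\mu_{\bullet}}$ that reverses a chain of modifications and dualizes it,
\[
 \bigl( \sE_b \xrightarrow{f_1} \sE_1 \xrightarrow{f_2} \sE_{\iota(b)} \bigr)
 \ \longmapsto\
 \bigl( \sE_b \xrightarrow{\iota(f_2)^{-1}} \iota_{*}\sE_1 \xrightarrow{\iota(f_1)^{-1}} \sE_{\iota(b)} \bigr),
\]
where $\iota_{*}\sE_1$ denotes the $G$-bundle obtained from $\sE_1$ by transport along the automorphism $\iota$ of $G$, and boundedness by $\mu_i$ is preserved because $\mu=-\iota\circ\mu$ in $X_*(T)^{+}$. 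A direct computation on $S$-valued points, of the same kind as the ones defining $\theta_{b,b'}$ and $\theta_b$ in \S\ref{sec:Dual}, shows that $\Theta_{\mu_{\bullet}}$ respects the stratification of $\Sht_{G,b,\iota(b)}^{\mu_{\bullet}}$ (in particular it fixes the stratum $\Sht_{G,b,\iota(b)}^{1,\mu_{\bullet}}$, since $\iota_{*}$ fixes the trivial $G$-bundle) and that, under the isomorphism $\overline{m}_{b_{\bullet},\mu_{\bullet}}\colon(\Sht_{G,b,1}^{\mu}\times_{\Spd\breve{E}}\Sht_{G,1,\iota(b)}^{\mu})/\wt{J}_1\xrightarrow{\sim}\Sht_{G,b,\iota(b)}^{1,\mu_{\bullet}}$ onto that stratum, $\Theta_{\mu_{\bullet}}$ corresponds to $\theta_b$. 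Combining this with Lemma \ref{lem:torhom}, Lemma \ref{lem:Jbnat} (here $\wt{J}_1=\ul{G(F)}$ and $N_1=0$) and the identification of the pullback of $\IC_{\mu_{\bullet}}$ with the external product $\IC_{\mu}\boxtimes\IC_{\mu}$, the involution $\theta_b$ on the source of $\Psi_{b,\mu}$ is carried, step by step through $\Psi_{b,\mu}$, to the operator $\Theta_{\mu_{\bullet}}^{*}$ on $R\Gamma_{\mathrm{c}}(\Sht_{G,b,\iota(b)}^{\mu_{\bullet}},\IC_{\mu_{\bullet}})\simeq\sum_{\lambda}V_{\mu_{\bullet}}^{\lambda}\otimes R\Gamma_{\mathrm{c}}(\Sht_{G,b,\iota(b)}^{\lambda})$.

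It then remains to identify $\Theta_{\mu_{\bullet}}^{*}$ on the right-hand side with the involution induced by $\sigma$ (together, on each factor $R\Gamma_{\mathrm{c}}(\Sht_{G,b,\iota(b)}^{\lambda})$, with the duality involution of \S\ref{sec:Dual}). For this I would spread the two legs apart over the open subspace $U_2\subset(\Spd F)^2$ of \S\ref{sec:conv}: there the convolution morphism $m_{b_{\bullet},\mu_{\bullet},U_2}$ is defined, $\theta_b$ extends to the morphism lying over the swap $\tau$ of the two $\Spd F$-factors, and $\Theta_{\mu_{\bullet}}$ extends to the corresponding involution of $\Sht_{G,b,\iota(b),U_2}^{\mu_{\bullet}}$ that swaps the two legs and applies $\theta$. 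Via the commutative diagram of \S\ref{sec:conv}--\S\ref{sec:coh} relating it to $\wt{\Gr}_{G,\Spd E_1\times\Spd E_2,\leq\mu_{\bullet}}$, together with $(m_{\mu_{\bullet}})_{*}\IC_{\mu_{\bullet}}=\sum_{\lambda}V_{\mu_{\bullet}}^{\lambda}\otimes\IC_{\lambda}$, the leg-swap part, pushed forward under $m_{\mu_{\bullet}}$ and read off through the symmetric monoidal structure of $\cS$, is precisely the commutativity constraint $c_{V_{\mu},V_{\mu}}$ on $\cS(V_{\mu})\star\cS(V_{\mu})=\cS(V_{\mu}\otimes V_{\mu})$ (the fusion description of $c$ over the relative Fargues--Fontaine curve with two legs, \cf \cite{FaScGeomLLC}), and by its characterization in \S\ref{ssec:GSat} it equals $\cS(\sigma)$, which on $\cS(V_{\mu}\otimes V_{\mu})=\sum_{\lambda}\Hom_{{}^L G}(V_{\lambda},V_{\mu}\otimes V_{\mu})\otimes\cS(V_{\lambda})$ acts as $\sigma$ on the multiplicity space $V_{\mu_{\bullet}}^{\lambda}$ and trivially on $\IC_{\lambda}$. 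The duality part, by the monoidality of the family $\theta^{*}\IC_{\bullet}\simeq\IC_{\bullet}^{\mathrm{op}}$ (\cf \cite{ZhuAffGmix}), matches $N_{\lambda}$ on $\IC_{\lambda}$ and, combined with $(\pi_{b,\iota(b)}^{\lambda,\mathrm{op}})^{*}\IC_{\lambda}^{\mathrm{op}}\to(\pi_{b,\iota(b)}^{\lambda})^{*}\IC_{\lambda}$, reproduces exactly the duality involution on $R\Gamma_{\mathrm{c}}(\Sht_{G,b,\iota(b)}^{\lambda})$ of Lemma \ref{lem:dualiso}. Hence $\Theta_{\mu_{\bullet}}^{*}$ is the prescribed involution on the target, so the square for $\Psi_{b,\mu}$ commutes. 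Compatibility with $W_E$ is automatic, since $\iota$ is defined over $F$ and commutes with the Weil descent data of \S\ref{sec:Twi}.

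I expect the main obstacle to be the step matching $\Theta_{\mu_{\bullet}}$, through $m_{\mu_{\bullet}}$ and geometric Satake, with the commutativity constraint: one must set up carefully the fusion/Beilinson--Drinfeld picture in which the two legs of $\wt{\Gr}_{G,\Spd E_1\times\Spd E_2,\leq\mu_{\bullet}}$ are moved apart and interchanged, and check that the convolution morphism $m_{\mu_{\bullet}}$, the stratification by the subspaces $\Sht_{G,b,\iota(b)}^{b_1,\mu_{\bullet}}$, and the canonical isomorphisms $N_{\mu}$ and $\theta^{*}\IC_{\mu}\simeq\IC_{\mu}^{\mathrm{op}}$ all behave compatibly under this degeneration, so that the characterization of $c_{V_1,V_2}$ recalled in \S\ref{ssec:GSat} can be applied. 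The remaining verifications are formal, relying on the naturality of proper base change already used in Proposition \ref{prop:decomp} together with Lemmas \ref{lem:torhom} and \ref{lem:Jbnat}.
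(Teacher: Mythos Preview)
Your overall strategy coincides with the paper's: spread the two legs apart over $U_2$ and use the fusion description of the commutativity constraint $c_{V_\mu,V_\mu}=\cS(\sigma)$ to transport $\theta_b$ on the source to $\sigma$ on the target. The paper realizes this via a single commutative square built from the intermediate extensions $j_{1,!*}$, $j_{2,!*}$ (over $(\Spd\breve E)^2\setminus\Delta$) and the restrictions $i_1^*$, $i_2^*$ to the diagonal, and asserts compatibility with the involutions $\theta_b$ on the top row and $c_{V_\mu,V_\mu}$ on the bottom row.

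Where you diverge is in introducing the explicit involution $\Theta_{\mu_\bullet}$ on $\Sht_{b,\iota(b)}^{\mu_\bullet}$ and then splitting it as ``leg-swap'' composed with ``duality''. This creates a visible problem: by your own account the duality part contributes the duality involution of \S\ref{sec:Dual} on each factor $R\Gamma_{\mathrm{c}}(\Sht_{b,\iota(b)}^\lambda)$, so you end up identifying $\Theta_{\mu_\bullet}^*$ with $\sigma\otimes\theta_{b,\iota(b)}^*$ rather than with the target involution $\sigma\otimes\mathrm{id}$ actually prescribed by the proposition. You then assert these agree without justification. They need not: $\theta_{b,\iota(b)}$ is a nontrivial automorphism of $\Sht_{b,\iota(b)}^\lambda$, and nothing you wrote forces its action on cohomology to be trivial. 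If your two-step decomposition were correct as stated, the eigenspace separation used in the proof of Theorem~\ref{thm:Kcases}\ref{en:d2} would fail.

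The paper avoids this by never decomposing. It uses the pure leg-swap on $\Sht_{b,\iota(b),U_2}^{(\mu,\mu)}$, which is the identity on the diagonal fiber and induces exactly $c_{V_\mu,V_\mu}$ on $i_2^*j_{2,!*}(\IC_\mu\boxtimes\IC_\mu)$, with no residual duality term. The genuine content is then that, under $m_{b_\bullet,\mu_\bullet,U_2}$ together with the $G(F)$-coinvariants, the involution $\theta_b$ upstairs matches this pure swap downstairs at the level of sheaves; this is where the isomorphisms $N_\mu\colon\theta^*\IC_\mu\xrightarrow{\sim}\IC_\mu^{\mathrm{op}}$ and $(\pi^{\mathrm{op}})^*\IC_\mu^{\mathrm{op}}\xrightarrow{\sim}\pi^*\IC_\mu$ enter, absorbing the duality rather than producing a second factor. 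Your argument would be repaired by making precisely this identification on the convolution morphism, instead of carrying the two pieces separately to the target.
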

\begin{proof}
By the characterization of 
the commutativity constraint, 
the equality 
\[
 \IC_{\mu}' \star \IC_{\mu}' =
 \sum_{\lambda \in X_*(T)^+/\Gamma} V_{\mu_{\bullet}}^{\lambda} \otimes 
 \mathrm{IC}_{\lambda}' 
\]
is compatible with the involutions 
$c_{V_{\mu}, V_{\mu}}$ and $\sigma_{V_{\mu}, V_{\mu}}$. 
Hence the target of 
$\Psi_{b,\mu}$ is equal to 
$H_{\mathrm{c}}^* (\Sht_{b,\iota(b)}^{2\mu},\IC_{\mu}' \star \IC_{\mu}')$ 
with the involution given by $c_{V_{\mu}, V_{\mu}}$ and $\theta_{b,\iota(b)}$. 
Let $\sigma_{2,X} \colon (\mathrm{Div}_X^1)^2 \to (\mathrm{Div}_X^1)^2$ 
and 
$\sigma_{2,G} \colon \cHck_G^{\{ 1,2 \}} \to \cHck_G^{\{ 1,2 \}}$ 
be the permutation of two Cartier divisors. 
Let $\IC_{\mu}' * \IC_{\mu}'$ be the fusion product on 
$\cHck_G^{\{ 1,2 \}}$. 
Here we use the notation at the beginning of 
\cite[VI.9]{FaScGeomLLC}. 
Then we have the morphism 
\[
\widetilde{c}_{V_{\mu},V_{\mu}} \colon \sigma_{2,G}^*(\IC_{\mu}' * \IC_{\mu}') \to \IC_{\mu}' * \IC_{\mu}' 
\] 
extending 
$c_{V_{\mu}, V_{\mu}}$. 

The morphism $\theta$ induces 
$\theta^{\{ 1\},\{2 \}} \colon \cHck_G^{\{ 1,2 \};\{ 1\},\{2 \}} \to \cHck_G^{\{ 1,2 \};\{ 1\},\{2 \}}$ 
switching two Cartier divisors. 
Here we use the notation in the proof of 
\cite[Proposition VI.9.4]{FaScGeomLLC}. 
Then we have a morphism 
\[
 S_{\mu,\mu} \colon \theta^{\{ 1\},\{2 \}*} (\IC_{\mu}' \boxtimes \IC_{\mu}') \to \IC_{\mu}' \boxtimes \IC_{\mu}' 
\]
induced by $M_{\mu}$ and switching two factors of $\IC_{\mu}'$. 
The morphism $\theta$ induces 
$\theta^{\{ 1,2 \}} \colon \cHck_G^{\{ 1,2 \}} \to \cHck_G^{\{ 1,2 \}}$ 
switching two Cartier divisors. 
Then we have 
\[
S_{\mu,\mu}' = m_{\natural} (S_{\mu,\mu}) \colon \theta^{\{1,2\}*}(\IC_{\mu}' * \IC_{\mu}') \to \IC_{\mu}' * \IC_{\mu}'. 
\]
Since $\theta^{\{1,2\}} \circ \sigma_{2,G}$ is the automorphism of 
$\cHck_G^{\{ 1,2 \}}$ over $(\mathrm{Div}_X^1)^2$ induced by $\theta$, 
we also have 
$M_{\mu,\mu} \colon (\theta^{\{1,2\}} \circ \sigma_{2,G})^*(\IC_{\mu}' * \IC_{\mu}') \to \IC_{\mu}' * \IC_{\mu}'$ 
defined in the same way as $M_{\mu}$. 

Then it suffices to show that 
\[
 \theta^{\{1,2\},*}(\IC_{\mu}' * \IC_{\mu}')
 \xrightarrow{\sigma_{2,G}^*(M_{\mu,\mu})}  
 \sigma_{2,G}^*(\IC_{\mu}' * \IC_{\mu}')) \xrightarrow{\widetilde{c}_{V_{\mu},V_{\mu}}}  
 \IC_{\mu}' * \IC_{\mu}'
\]
and $S_{\mu,\mu}'$ 
are equal. 
It suffices to check 
this on $\cHck_G^{\{1,2\}} \times_{(\mathrm{Div}_X^1)^{\{1,2\}}} (\mathrm{Div}_X^1)^{\{1,2\};\{1\},\{2\}}$ 
by \cite[Proposition VI.9.3]{FaScGeomLLC}. 
This follows from the constructions of 
$\widetilde{c}_{V_{\mu},V_{\mu}}$, $S'_{\mu,\mu}$ and $M_{\mu,\mu}$. 
\end{proof}

\section{Kottwitz conjecture}\label{sec:Kot}

\begin{defn}
Let $\varphi \colon W_F \to {}^LG$ 
be an $\ell$-adic local L-parameter for $G$ (\cf \cite[Definition 1.14]{ImaLLCell}). 
We put 
\[
 S_{\varphi}=
 \{ g \in \wh{G}(\ol{\bQ}_{\ell}) \mid g \varphi g^{-1}=\varphi \}. 
\]
We say that $\varphi$ is discrete if 
$S_{\varphi}/Z(\wh{G})^{\Gamma_F}$ is finite (\cf \cite[Definition 4.1]{FarGover}). 
\end{defn}

Let $b,b' \in \GL_n(\breve{F})$ 
such that 
$[b] \in B(G,\mu,[b'])$. 
We put 
\[
 H_{\mathrm{c}}^{\bullet} (\Sht_{b,b'}^{\mu})[\pi] = 
 \sum_{i,j \in \bZ} (-1)^{i+j} \Ext_{G_{b}(F)}^i 
 \left( R^j\Gamma_{\mathrm{c}} (\Sht_{b,b'}^{\mu}), \pi 
 \right) 
\]
for an irreducible smooth representation $\pi$ of $G_{b}(F)$. 

The following is a version of Kottwitz conjecture 
for moduli spaces of mixed characteristic local shtukas 
in $\mathrm{GL}_n$-case (\cf \cite[Conjecture 7.4]{RVlocSh}): 

\begin{conj}\label{conj:Kot}
Assume that $b,b'$ are basic. 
Let $\varphi \colon W_F \to {}^L\GL_n$ 
be a discrete local L-parameter. 
Let $\pi_b$ and $\pi_{b'}$ be the 
irreducible smooth representations of 
$G_b(F)$ and $G_{b'}(F)$ corresponding to 
$\varphi$ via the local Langlands correspondence. 
Then we have 
\[
 H_{\mathrm{c}}^{\bullet} (\Sht_{b,b'}^{\mu})[\pi_{b}]=
 \pi_{b'} \boxtimes (r_{\mu} \circ \varphi) 
\]
in $\mathrm{Groth}(G_{b'}(F) \times W_F)$. 
\end{conj}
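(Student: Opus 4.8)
The plan is to reduce to the case $b'=1$ and then extract the single space $\Sht_{b,1}^{\mu}$ out of a \emph{self}-convolution by playing the decomposition of Proposition~\ref{prop:decomp} against the duality symmetry of Proposition~\ref{prop:invcomp}. First I would use Proposition~\ref{prop:Mtriv} together with Lemma~\ref{lem:BGbij} to replace the datum $(G,b,b')$ by $(J_{b'},bb'^{-1},1)$: since $J_{b'}$ is again an inner form of $\GL_n$ and the local Langlands correspondence is compatible with this replacement, it suffices to prove $H_{\mathrm{c}}^{\bullet}(\Sht_{b,1}^{\mu})[\pi_1]=\pi_b\boxtimes(r_\mu\circ\varphi)$. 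Next I would apply the twist morphism (Proposition~\ref{prop:twist}) with $Z^0\cong\bG_m$ to strip the central part of $\mu$ and, combined with the inversing isomorphism \eqref{eq:invmor}, to move freely among the various minuscule coweights of $\GL_n$ (for $\GL_3$ this identifies the standard and the $\bigwedge^2$-standard cases up to a character twist), so that $\mu$ may be normalized conveniently. Finally, the \emph{weak} form of the statement --- equality in $\mathrm{Groth}(J_b(F)\times W_F)$ modulo representations with trace $0$ on regular elliptic elements --- is already available from \cite{HKWKotloc} via the Remark following Lemma~\ref{lem:Mempty}, so the genuine content is to pin down the $W_F$-action and to kill the non-elliptic error term.

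The main device is to take $m=2$, $\mu_{\bullet}=(\mu,\mu)$, $b_0=b$, $b_2=\iota(b)$, and to feed this into Proposition~\ref{prop:decomp}, obtaining the morphism $\Psi_{b,\mu}$. Its source $\bigl(R\Gamma_{\mathrm{c}}(\Sht_{b,1}^{\mu})\otimes R\Gamma_{\mathrm{c}}(\Sht_{1,\iota(b)}^{\mu})\bigr)\otimes^{\bL}_{G(F)}\ol{\bQ}_\ell$ is glued, along the non-basic $[b_1]\in I_{b,\iota(b)}^{(\mu,\mu)}$, from shifted convolution terms whose two factors $R\Gamma_{\mathrm{c}}(\Sht_{b,b_1}^{\mu})$ and $R\Gamma_{\mathrm{c}}(\Sht_{b_1,\iota(b)}^{\mu})$ are minuscule shtuka cohomologies; for $b_1$ basic these are computed up to elliptic error by the weak Kottwitz statement, and for $b_1$ non-basic one must supply a separate vanishing or reduction (via Proposition~\ref{prop:Mtriv} and the non-quasi-split or Harris--Viehmann-type input). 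The target of $\Psi_{b,\mu}$ is $\sum_{\lambda}V_{(\mu,\mu)}^{\lambda}\otimes R\Gamma_{\mathrm{c}}(\Sht_{b,\iota(b)}^{\lambda})$, and since $\mu$ is minuscule one has the short decomposition $V_\mu\otimes V_\mu=\Sym^2V_\mu\oplus\bigwedge^2V_\mu$: only a handful of small $\lambda$ occur, with the top $\lambda=2\mu$ giving the ``new'' non-minuscule space $\Sht_{b,\iota(b)}^{2\mu}$ and the remaining $\lambda$ (minuscule for $\GL_3$, $\det$-like for $\GL_2$) being the spaces whose Kottwitz statement is the actual goal.

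Now invoke Proposition~\ref{prop:invcomp}: $\Psi_{b,\mu}$ intertwines the involution $\theta_b$ on the source with the switch $\sigma$ (equivalently $c_{V_\mu,V_\mu}$) on the target, and $\sigma$ acts by $+1$ on $\Sym^2V_\mu$ and $-1$ on $\bigwedge^2V_\mu$. Splitting the $\theta_b$-equivariant source into $\pm1$-eigenparts therefore (i) pins down the $W_F$-action, because the Satake side of the target carries its Galois action canonically, and (ii) separates the $\Sym^2$-supported $\lambda$ from the $\bigwedge^2$-supported $\lambda$. Applying $R\Hom_{J_{\iota(b)}(F)}(-,\pi_{\iota(b)})$, using Lemma~\ref{lem:RHom} to rewrite the answer as an $R\Hom$ over $J_{b_1}(F)$, and Lemma~\ref{lem:dualiso} to match the two duality-related spaces, one gets an identity that --- modulo the lower-$\lambda$ terms and the non-basic $[b_1]$-contributions, both handled by induction on $\langle2\rho,\mu\rangle$ and on $n$ --- determines $R\Gamma_{\mathrm{c}}(\Sht_{b,1}^{\mu})[\pi_1]$, hence by the normalizations of the first paragraph the desired $\pi_b\boxtimes(r_\mu\circ\varphi)$; for $\GL_2$ this becomes the inductive formulas of Theorem~\ref{thmi:Ind}, and for $\GL_3$ the separation isolates exactly the minuscule summand needed for Theorem~\ref{thmi:GL3}.

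The hard part is the error term. The weak statement pins the virtual character down only on regular elliptic elements, so one must show that the newly produced piece $H_{\mathrm{c}}^{\bullet}(\Sht_{b,\iota(b)}^{2\mu})[\pi_{\iota(b)}]$ contains no parabolically induced constituents: when $\varphi$ is cuspidal this is forced, since $\pi_{\iota(b)}$ is supercuspidal and $R\Hom$ against it annihilates induced pieces (consistent with the expectation of \cite[Remark 4.6]{FarGover}), but for a general discrete $\varphi$ an extra vanishing input is needed. Moreover the non-basic $[b_1]\in I_{b,\iota(b)}^{(\mu,\mu)}$ genuinely contribute cohomology of non-basic shtuka spaces --- under control for $\GL_2$, $\GL_3$ and for $G$ not quasi-split (many such spaces being empty), but not in general --- and for $n\ge4$ the combinatorics of which $\lambda$ appear, together with the growth of $I$, prevents the induction from obviously terminating. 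So I expect this approach to yield Theorems~\ref{thmi:GL3} and~\ref{thmi:Ind} and the cuspidal cases in general, with the full conjecture for all $n$ and all discrete $\varphi$ requiring new ideas on the vanishing of non-elliptic contributions.
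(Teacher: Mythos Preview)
The statement you are trying to prove is Conjecture~\ref{conj:Kot}, and the paper does \emph{not} prove it: it is stated as a conjecture, partial cases are established in Theorem~\ref{thm:Kcases}, Corollary~\ref{cor:minu} and Theorem~\ref{thm:nonmin}, and --- crucially --- Example~\ref{eg:(2,0)} exhibits an explicit \emph{counterexample} when $n=2$, $\mu=(2,0)$ and $\varphi$ is not cuspidal. So there is no ``paper's own proof'' to compare against, and any argument purporting to establish the full statement must fail.

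Your proposal, read as a strategy for the \emph{special cases} the paper does prove, is essentially the paper's own approach: the reduction to $b'=1$ via Proposition~\ref{prop:Mtriv}, the twisting and inversing normalizations, the self-convolution $(\mu,\mu)$ fed into Proposition~\ref{prop:decomp}, and the separation of $\Sym^2$ versus $\bigwedge^2$ pieces via Proposition~\ref{prop:invcomp} are exactly the ingredients of the proof of Theorem~\ref{thm:Kcases}~\ref{en:d2} and hence Corollary~\ref{cor:minu}. One point you gloss over is that for cases \ref{en:d0}--\ref{en:d-1} the paper does not use the convolution/duality machinery at all but appeals directly to \cite{DatLTel}; the self-convolution argument is only invoked to bootstrap from \ref{en:d1} to \ref{en:d2}, and there $I_{b,\iota(b)}^{(\mu_1,\mu_1)}=\{[1]\}$ so no non-basic $[b_1]$ appear.

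Your final paragraph correctly senses that the method does not close for general discrete $\varphi$, but mis-diagnoses the obstruction as a missing ``vanishing input''. The point of Example~\ref{eg:(2,0)} is that the parabolically induced error term is genuinely nonzero: for $\mu$ non-minuscule and $\varphi$ non-cuspidal the conjecture is simply false, so no amount of new ideas on vanishing will rescue it. What your outline would honestly yield is a proof strategy for the cases actually established in the paper, together with the inductive formulas of \S\ref{sec:ind}; you should rewrite the conclusion to say this rather than presenting the residual cases as open.
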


For an object $\cC$ in a derived category, we put 
$\cH^*(\cC)=\bigoplus_{i \in \bZ} \cH^i(\cC)$. 
The following conjecture is motivated by 
\cite[Th\'eor\`eme A]{DatLTel}. 

\begin{conj}\label{conj:DKot}
Assume that $b,b'$ are basic. 
Let $\varphi \colon W_F \to {}^L\GL_n$ 
be a discrete local L-parameter. 
Let $\pi_b$ and $\pi_{b'}$ be the irreducible smooth representations of 
$G_b(F)$ and $G_{b'}(F)$ corresponding to 
$\varphi$ via the local Langlands correspondence. 
Then we have 
\[
 \cH^* \left( R\Hom_{G_{b}(F)} 
 \left( R\Gamma_{\mathrm{c}} (\Sht_{b,b'}^{\mu}), \pi_{b} 
 \right) \right) \simeq 
 \pi_{b'} \boxtimes (r_{\mu} \circ \varphi) 
\]
as representations of $G_{b'}(F) \times W_F$. 
\end{conj}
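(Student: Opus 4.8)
The plan is to bootstrap the derived statement from the already-known \emph{weak} Kottwitz conjecture (valid for basic $b,b'$ by \cite[Theorem 1.0.4]{HKWKotloc} together with Lemma \ref{lem:BGbij} and Proposition \ref{prop:Mtriv}), using the decomposition formula of Proposition \ref{prop:decomp}, the duality morphism of \S\ref{sec:Dual}, and the compatibility Proposition \ref{prop:invcomp} to pin down the individual summands and the Weil-group action. First I would normalize the datum: by Proposition \ref{prop:Mtriv} and Lemma \ref{lem:BGbij} we may replace $(G,b,b')$ by $(J_{b'},bb'^{-1},1)$ and so assume $b'=1$ (allowing $G$ to be a possibly non-quasi-split inner form of $\GL_n$), with $\pi_{b'}$ becoming the representation $\pi$ of $G(F)$ attached to $\varphi$; and by the twist morphism of Proposition \ref{prop:twist}, applied with $Z^0$ the centre, we may shift $\mu$ by a central cocharacter at the cost of twisting $\varphi$ by a character, which preserves discreteness and is compatible with the desired formula. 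So it suffices to prove
\[
 R\Hom_{G(F)}\!\left(R\Gamma_{\mathrm{c}}(\Sht_{b,1}^{\mu}),\pi\right)\simeq\pi_b\boxtimes(r_\mu\circ\varphi)
\]
as $J_b(F)\times W_F$-representations, for basic $b$ with $[b]\in B(G,\mu,[1])$.

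The key input is the morphism $\Psi_{b,\mu}$ of \S\ref{sec:Dual}: by Proposition \ref{prop:decomp} it relates $\bigl(R\Gamma_{\mathrm{c}}(\Sht_{b,1}^{\mu})\otimes R\Gamma_{\mathrm{c}}(\Sht_{1,\iota(b)}^{\mu})\bigr)\otimes_{G(F)}^{\bL}\ol{\bQ}_\ell$ to $\sum_{\lambda}V_{(\mu,\mu)}^{\lambda}\otimes R\Gamma_{\mathrm{c}}(\Sht_{b,\iota(b)}^{\lambda})$, the discrepancy being a string of distinguished triangles indexed by the non-basic classes in $I_{b,\iota(b)}^{(\mu,\mu)}$. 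Via the inversing isomorphism \eqref{eq:invmor} (the Faltings--Fargues isomorphism, \cite[Corollary 23.2.3]{ScWeBLp}) and the duality isomorphism of Lemma \ref{lem:dualiso}, the factor $R\Gamma_{\mathrm{c}}(\Sht_{1,\iota(b)}^{\mu})$ is a Tate twist of the ``dual'' of $R\Gamma_{\mathrm{c}}(\Sht_{b,1}^{\mu})$ --- recall that the duality involution $\iota$ is chosen precisely so that $\pi\circ\iota\cong\pi^{\vee}$ --- so the source of $\Psi_{b,\mu}$ is a derived self-pairing attached to $R\Gamma_{\mathrm{c}}(\Sht_{b,1}^{\mu})$ over $G(F)$. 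Applying $R\Hom(-,\pi_{\iota(b)})$ to $\Psi_{b,\mu}$ and invoking the adjunction of Lemma \ref{lem:RHom} then converts the displayed relation into one that expresses the wanted complex $R\Hom_{G(F)}(R\Gamma_{\mathrm{c}}(\Sht_{b,1}^{\mu}),\pi)$ in terms of the collection $\{R\Hom_{J_{\iota(b)}(F)}(R\Gamma_{\mathrm{c}}(\Sht_{b,\iota(b)}^{\lambda}),\pi_{\iota(b)})\}_{\lambda}$, up to the contributions of the non-basic strata.

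Next I would evaluate these. Since $b$ and $\iota(b)$ are basic, the weak Kottwitz conjecture gives $H_{\mathrm{c}}^{\bullet}(\Sht_{b,\iota(b)}^{\lambda})[\pi_{\iota(b)}]=\pi_b^{(\lambda)}\boxtimes(r_\lambda\circ\varphi)$ in the Grothendieck group, with $\pi_b^{(\lambda)}$ a sum of members of the $L$-packet of $\varphi$ for $J_b$, up to an error supported on representations whose trace vanishes on regular elliptic elements. When $\varphi$ is cuspidal, $\pi_b$ and all packet members are supercuspidal, so the error term cannot meet the $\pi_b$-isotypic part and --- granting the expected concentration of these cohomologies in a single degree --- the problem reduces to an identity of genuine representations. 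The summands indexed by $\lambda$ are then separated, and the $W_F$-action fixed on the nose, by Proposition \ref{prop:invcomp}: it matches the involution on $\Psi_{b,\mu}$ coming from $\theta_b$ with the commutativity constraint $c_{V_\mu,V_\mu}$, hence the decomposition $V_\mu\otimes V_\mu=\Sym^2 V_\mu\oplus\bigwedge^2 V_\mu$ with the symmetric/antisymmetric splitting of the self-pairing. Comparing both sides piece by piece identifies $R\Hom_{G(F)}(R\Gamma_{\mathrm{c}}(\Sht_{b,1}^{\mu}),\pi)$ with $\pi_b\boxtimes(r_\mu\circ\varphi)$, and transporting back through the twist and $b'=1$ reductions yields Conjecture \ref{conj:DKot}.

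The main obstacle is controlling the ``extra'' data: the non-basic classes $[b_1]\neq[1]$ in $I_{b,\iota(b)}^{(\mu,\mu)}$ and the non-dominant (possibly non-minuscule) cocharacters $\lambda$ occurring in $V_\mu\otimes V_\mu$, together with the degree-concentration needed to upgrade a Grothendieck-group identity to a derived-category one. For inner forms of $\GL_2$ and $\GL_3$ these are manageable by hand --- the relevant $\Sht_{b,\iota(b)}^{\lambda}$ are central twists, or are Hodge--Newton reducible so that their cohomology is computed by parabolic induction as in \cite{GINsemi} and \cite{HanHarr}, or are accessible by the inductive formulas of \S\ref{sec:ind} --- but for general $\GL_n$ one would need a genuinely new input for the non-basic contributions: as Example \ref{eg:(3,0)} shows, the naive Harris--Viehmann description fails there and must be corrected, and the Fargues--Scholze geometrization \cite{FaScGeomLLC} is the natural source of such a correction. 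Likewise the degree-concentration step genuinely uses cuspidality of $\varphi$, so for merely discrete $\varphi$ (beyond the $\GL_3$-minuscule case) a separate argument would be required.
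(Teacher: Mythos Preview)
The statement you are trying to prove is a \emph{conjecture}: the paper does not prove it in general. What the paper establishes is Theorem~\ref{thm:Kcases} (and its Corollary~\ref{cor:minu}), which verifies Conjecture~\ref{conj:DKot} only in a short explicit list of cases. So your proposal should be read as an attempted strategy toward the general conjecture, and compared with the paper's strategy for the cases it actually handles.

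There is a structural circularity in your plan. You convolve $\mu$ with itself via $\Psi_{b,\mu}$ and then apply $R\Hom(-,\pi_{\iota(b)})$ together with Lemma~\ref{lem:RHom}. But Lemma~\ref{lem:RHom} gives
\[
R\Hom_{G(F)}\!\left(R\Gamma_{\mathrm{c}}(\Sht_{b,1}^{\mu}),\;R\Hom_{J_{\iota(b)}(F)}\bigl(R\Gamma_{\mathrm{c}}(\Sht_{1,\iota(b)}^{\mu}),\pi_{\iota(b)}\bigr)\right),
\]
so to extract $R\Hom_{G(F)}(R\Gamma_{\mathrm{c}}(\Sht_{b,1}^{\mu}),\pi)$ you must already know that the inner $R\Hom$ equals $\pi\boxtimes(r_{\mu}\circ\varphi)$, i.e.\ the very case $(1,\iota(b),\mu)$ of the conjecture (which by duality is equivalent to $(b,1,\mu)$). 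The paper avoids this by running the convolution in the opposite direction: in the proof of Theorem~\ref{thm:Kcases}\,\ref{en:d2} one takes $\mu_1=(1,0,\dots,0)$ \emph{minuscule} on both legs, so that the inner and outer $R\Hom$'s are already computed by case~\ref{en:d1} (which rests on Dat's derived-category result \cite{DatLTel}, not on the weak Kottwitz theorem), and the unknown quantities $R\Gamma_{\mathrm{c}}(\Sht_{b,\iota(b)}^{\lambda})$ for $\lambda\in\{\mu_2,\mu_{1,1}\}$ appear on the \emph{other} side of $\Psi$; Proposition~\ref{prop:invcomp} then separates them. In short, the paper builds up from known small $\mu$'s to the target $\mu$, whereas your scheme feeds the target $\mu$ into both legs and so presupposes what it aims to prove.

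A second genuine gap is the proposed upgrade from the weak Kottwitz theorem of \cite{HKWKotloc} to a derived-category statement. The weak theorem is an identity in the Grothendieck group \emph{modulo} representations with vanishing elliptic trace; there is no general degree-concentration result to promote this to an isomorphism of complexes, and Example~\ref{eg:(2,0)} shows that already for $\GL_2$ and non-cuspidal $\varphi$ the Euler characteristic acquires an extra parabolically induced term, so concentration genuinely fails. This is why the paper uses \cite{DatLTel} (a derived statement) rather than \cite{HKWKotloc} as its base input. Your closing paragraph correctly flags both the non-basic strata and the concentration issue as obstacles, but they are not peripheral: they are exactly what prevents the outlined argument from proving anything beyond the cases in Theorem~\ref{thm:Kcases}.
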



\begin{lem}\label{lem:contr}
Assume that $b$ is basic. 
Let $\pi_b$ and $\pi_{\iota(b)}$ be the irreducible smooth 
representations of 
$G_b(F)$ and $G_{\iota(b)}(F)$ corresponding via the local Jacquet--Langlands 
correspondence. 
Then the pullback of $\pi_{\iota(b)}$ under the 
isomorphism $\iota \colon G_b(F) \to G_{\iota(b)}(F)$ 
is isomorphic to $\pi_b^*$. 
\end{lem}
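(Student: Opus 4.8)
The plan is to reduce the statement to a comparison of Harish--Chandra characters and then invoke the defining character identity of the Jacquet--Langlands correspondence. First I would record the groups: since $b$ is basic, $J_b$ is an inner form of $\GL_n$, say $J_b(F)=\GL_m(D)$ for a central division $F$-algebra $D$ with $\dim_F D=d^2$ and $n=md$. Because the duality involution $\iota$ acts by $-1$ on $\pi_1(\GL_n)=\bZ$, it negates the Kottwitz invariant of $b$, so $J_{\iota(b)}(F)=\GL_m(D^{\mathrm{op}})$ (with the same $m$), and, up to a harmless inner automorphism, $\iota\colon J_b\to J_{\iota(b)}$ is the transpose-inverse map $g\mapsto {}^{t}g^{-1}$, where transpose is the $F$-algebra anti-isomorphism $M_m(D)\xrightarrow{\sim} M_m(D^{\mathrm{op}})$.

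Then I would argue as follows. Both $\iota^{*}\pi_{\iota(b)}$ and $\pi_b^{*}$ are irreducible smooth representations of $J_b(F)$, so it suffices to show their characters agree on the (dense open) regular semisimple locus, after which $\iota^{*}\pi_{\iota(b)}\cong\pi_b^{*}$. For regular semisimple $g\in J_b(F)$ one has $\Theta_{\pi_b^{*}}(g)=\Theta_{\pi_b}(g^{-1})$ and $\Theta_{\iota^{*}\pi_{\iota(b)}}(g)=\Theta_{\pi_{\iota(b)}}(\iota(g))$. Next I would check that $\iota(g)={}^{t}g^{-1}\in J_{\iota(b)}(F)$ and $g^{-1}\in J_b(F)$ are matching regular semisimple elements: after $-\otimes_F\overline{F}$ the transpose becomes an anti-automorphism of $M_n(\overline{F})$, hence preserves characteristic polynomials, so $\iota(g)$ and $g^{-1}$ have the same reduced characteristic polynomial. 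Since $\GL_m(D)$ and $\GL_m(D^{\mathrm{op}})$ have the same split rank $m$, the sign in the Jacquet--Langlands character identity relating $\pi_b$ and $\pi_{\iota(b)}$ is $+1$, so $\Theta_{\pi_{\iota(b)}}(\iota(g))=\Theta_{\pi_b}(g^{-1})$. Combining the three displayed equalities gives $\Theta_{\iota^{*}\pi_{\iota(b)}}=\Theta_{\pi_b^{*}}$ on the regular semisimple locus, which finishes the proof.

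An equivalent, more conceptual formulation of the same argument: $\iota$ is an $F$-isomorphism $J_b\to J_{\iota(b)}$ that realizes the outer duality automorphism of $\GL_n$ after base change to $\overline{F}$, so it intertwines the two Jacquet--Langlands correspondences up to composition with the Gelfand--Kazhdan involution of $\GL_n(F)$; since $\pi_b$ and $\pi_{\iota(b)}$ have a common transfer $\Pi$ to $\GL_n(F)$, since $\Pi\circ\iota_{\GL_n}\cong\Pi^{*}$ (Gelfand--Kazhdan; compare \cite{PraMVWinv}), and since the Jacquet--Langlands correspondence commutes with $(-)^{*}$, one gets $\mathrm{JL}(\iota^{*}\pi_{\iota(b)})\cong\Pi^{*}\cong\mathrm{JL}(\pi_b^{*})$, and the claim follows from the injectivity of Jacquet--Langlands on essentially square-integrable representations. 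In either presentation, the part I expect to require genuine care is the explicit identification of $\iota$ with the transpose-inverse map together with its two consequences --- that $\iota$ matches $g$ with $g^{-1}$ and that the relevant Jacquet--Langlands sign is $+1$; granting these, the rest is formal.
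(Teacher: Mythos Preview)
Your proposal is correct and follows essentially the same approach as the paper. The paper's proof reduces to $\iota(g)={}^{t}g^{-1}$ via \cite[Corollary 1]{PraMVWinv}, invokes Gelfand--Kazhdan for the case $b=1$, and then observes that $\iota$ preserves the matching of regular elements by reduced characteristic polynomial so the general case follows from the character characterization of the Jacquet--Langlands correspondence---this is exactly your second, ``conceptual'' formulation, and your first formulation is the same argument with the passage through $\GL_n(F)$ suppressed (the sign computation you single out is indeed the only point requiring care, and your justification that $m_1=m_2$ is correct).
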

\begin{proof}
By \cite[Corollary 1]{PraMVWinv}, 
we may assume that 
$\iota(g)={}^t g^{-1}$. 
If $b=1$, the calim follows from a theorem of 
Gelfand and Kazhdan (\cf \cite[7.3. Theorem]{BeZeRepGL}). 
If regular elements 
$g \in \GL_n(F)$ and $g' \in G_b(F)$ 
have the same reduced characteristic polynomial, 
then 
$\iota(g) \in \GL_n(F)$ and $\iota(g') \in G_{\iota(b)}(F)$ 
are regular and 
have the same reduced characteristic polynomial. 
Hence the claim follows from the case where $b=1$ 
and the characterization of the local Jacquet--Langlands 
correspondence. 
\end{proof}

We put $\kappa (b)=v_F (\det (b))$. 
For $m_1, \ldots , m_n \in \bZ$, 
let 
$(m_1, \ldots , m_n)$ denote the cocharacter 
of $\GL_n$ or its standard Levi subgroup defined by 
$z \mapsto \mathrm{diag} (z^{m_1}, \ldots , z^{m_n})$. 

\begin{thm}\label{thm:Kcases}
Conjecture \ref{conj:DKot} is true 
in the following cases: 
\begin{enumerate}
\item\label{en:d0}
$\kappa (b) \equiv \kappa (b') \mod n$ and 
\[
 \mu =\frac{\kappa (b) -\kappa (b')}{n}(1,\ldots,1). 
\]
\item\label{en:d1}
$\kappa (b) \equiv 0,1,\ \kappa (b) \equiv \kappa (b') + 1 \mod n$ and 
\[
 \mu =\frac{\kappa (b) -\kappa (b')-1}{n}(1,\ldots,1)+
 (1,0,\ldots,0). 
\]
\item\label{en:d-1}
$\kappa (b) \equiv 0,-1,\ \kappa (b) \equiv \kappa (b') - 1 \mod n$ and 
\[
 \mu =\frac{\kappa (b) -\kappa (b')+1}{n}(1,\ldots,1)+
 (0,\ldots,0,-1). 
\]
\item\label{en:d2}
$\kappa (b) \equiv 1,\ \kappa (b') \equiv -1 \mod n$ and 
\[
 \mu =\frac{\kappa (b) -\kappa (b')-2}{n}(1,\ldots,1)+
 \begin{cases}
 (2,0,\ldots,0), \\ 
 (1,1,0,\ldots,0). 
 \end{cases}
\]
\item\label{en:d-2}
$\kappa (b) \equiv -1,\ \kappa (b') \equiv 1 \mod n$ and 
\[
 \mu =\frac{\kappa (b) -\kappa (b')+2}{n}(1,\ldots,1)+
 \begin{cases}
 (0,\ldots,0,-2), \\ 
 (0,\ldots,0,-1,-1). 
 \end{cases}
\]
\end{enumerate}
\end{thm}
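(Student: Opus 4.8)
The plan is: treat case \ref{en:d0} by an explicit computation; reduce cases \ref{en:d-1} and \ref{en:d-2} to cases \ref{en:d1} and \ref{en:d2}; treat case \ref{en:d1} (the minuscule case) as the base case; and deduce case \ref{en:d2} from cases \ref{en:d0} and \ref{en:d1} via Proposition \ref{prop:decomp} and the duality morphism. For case \ref{en:d0}, applying Proposition \ref{prop:twist} with $Z^0=\bG_m$ and $\lambda=\frac{\kappa(b)-\kappa(b')}{n}(1,\ldots,1)$ reduces to $\mu=0$, $\kappa(b)=\kappa(b')$, where $[b]=[b']$ and $\Sht_{G,b,b'}^0$ is canonically $\ul{J_b(F)}$, so $R\Gamma_{\mathrm c}(\Sht_{b,b'}^0)\simeq\cH(J_b(F))$ sits in degree $0$ and $R\Hom_{J_b(F)}(\cH(J_b(F)),\pi_b)\simeq\pi_b$; the $\bG_m$-factor $R\Gamma_{\mathrm c}(\Sht_{\bG_m,a,a'}^\lambda)$, computed by Lemma \ref{lem:cohAD}, contributes exactly the Tate twist that corresponds, under the local Langlands correspondence for $\GL_1$, to $r_\mu\circ\varphi=(\det\circ\varphi)^{(\kappa(b)-\kappa(b'))/n}$. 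This proves Conjecture \ref{conj:DKot} in case \ref{en:d0}.

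Cases \ref{en:d1} and \ref{en:d-1} (resp.\ \ref{en:d2} and \ref{en:d-2}) are interchanged by the inversing morphism \eqref{eq:invmor}, which gives $R\Gamma_{\mathrm c}(\Sht_{b,b'}^\mu)\simeq R\Gamma_{\mathrm c}(\Sht_{b',b}^{-\mu})$ compatibly with the actions; so it suffices to prove cases \ref{en:d1} and \ref{en:d2}. For case \ref{en:d1}, Proposition \ref{prop:twist} reduces us to $\mu=(1,0,\ldots,0)$, $\kappa(b)=\kappa(b')+1$, $\kappa(b)\equiv 0,1\bmod n$, and then \eqref{eq:invmor}, Proposition \ref{prop:Mtriv}, and the duality morphism of \S\ref{sec:Dual} (with Lemmas \ref{lem:dualiso} and \ref{lem:contr}, which send $\varphi$ to $\varphi^{\vee}$) reduce everything to Conjecture \ref{conj:DKot} for the basic Rapoport--Zink space $\Sht_{b,1}^{(1,0,\ldots,0)}$ of $\GL_n$. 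Here I would invoke the minuscule case of the Kottwitz conjecture: for cuspidal $\varphi$ it is available from \cite{FaScGeomLLC} (as used in \cite{HansclocSh}, \cite{AnLBAvGLn}), and for general discrete $\varphi$ it follows by induction on $n$ from the Harris--Viehmann description of the non-basic strata (valid because $\mu$ is minuscule and the relevant strata are Hodge--Newton reducible, \cite{GINsemi}, \cite{HanHarr}) together with the Bernstein--Zelevinsky classification of discrete series of $\GL_n$ --- equivalently, essentially \cite[Th\'eor\`eme A]{DatLTel}. Establishing this minuscule case for non-cuspidal discrete $\varphi$ is the step I expect to be the main obstacle.

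For case \ref{en:d2}, after reducing the central part by Proposition \ref{prop:twist} and using $\sigma$-conjugacy and \eqref{eq:tg}, I may assume $\kappa(b)=1$, $b'=\iota(b)$, and $\mu\in\{(2,0,\ldots,0),(1,1,0,\ldots,0)\}$. Apply Proposition \ref{prop:decomp} with $m=2$ and $\mu_\bullet=\bigl((1,0,\ldots,0),(1,0,\ldots,0)\bigr)$: since $V_{(1,0,\ldots,0)}\otimes V_{(1,0,\ldots,0)}=V_{(2,0,\ldots,0)}\oplus V_{(1,1,0,\ldots,0)}$, and since by Lemma \ref{lem:Mempty} (together with \eqref{eq:invmor}), applied at the two basic endpoints $b$ and $\iota(b)$, the index set $I_{b,\iota(b)}^{\mu_\bullet}$ is squeezed inside $B(\GL_n,(1,0,\ldots,0),[\iota(b)])$ and the corresponding set from the $b$-side, both of which equal $\{[1]\}$ (a minuscule modification of a semistable bundle cannot widen the Newton polygon), the morphism $\Psi_{b,(1,0,\ldots,0)}$ of \S\ref{sec:Dual} is an isomorphism
\[
\begin{aligned}
 &\bigl(R\Gamma_{\mathrm c}(\Sht_{b,1}^{(1,0,\ldots,0)})\otimes R\Gamma_{\mathrm c}(\Sht_{1,\iota(b)}^{(1,0,\ldots,0)})\bigr)\otimes_{G(F)}^{\bL}\ol{\bQ}_{\ell}\\
 &\qquad\xrightarrow{\ \sim\ } R\Gamma_{\mathrm c}(\Sht_{b,\iota(b)}^{(2,0,\ldots,0)})\oplus R\Gamma_{\mathrm c}(\Sht_{b,\iota(b)}^{(1,1,0,\ldots,0)})
\end{aligned}
\]
(with $N_{b_\bullet}=N_1=0$), and by Proposition \ref{prop:invcomp} it intertwines the involution $\theta_b$ on the left with the involution $\sigma$ on the right, so the two summands on the right are the $\pm1$-eigenspaces of $\theta_b$.

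Now apply $R\Hom_{J_{\iota(b)}(F)}(-,\pi_{\iota(b)})$. On the left, Lemma \ref{lem:RHom} turns this into an iterated $R\Hom$, and case \ref{en:d1} applied first to $\Sht_{1,\iota(b)}^{(1,0,\ldots,0)}$ and then to $\Sht_{b,1}^{(1,0,\ldots,0)}$ gives $\pi_b\boxtimes(\varphi\otimes\varphi)$, the two copies of $r_{(1,0,\ldots,0)}\circ\varphi=\varphi$ being amalgamated diagonally over the common leg. On this description $\theta_b$ acts by interchanging the two legs, i.e.\ as $\mathrm{id}_{\pi_b}\boxtimes\sigma$ on $\pi_b\boxtimes(\varphi\otimes\varphi)=\pi_b\boxtimes\Sym^2\varphi\oplus\pi_b\boxtimes\bigwedge^2\varphi$; matching this with the $\pm1$-eigenspace decomposition on the right of $\Psi_{b,(1,0,\ldots,0)}$, and using that $\sigma$ is $+1$ on $\Sym^2 V_{(1,0,\ldots,0)}$ and $-1$ on $\bigwedge^2 V_{(1,0,\ldots,0)}$, one obtains
\[
 R\Hom_{J_{\iota(b)}(F)}\bigl(R\Gamma_{\mathrm c}(\Sht_{b,\iota(b)}^{(2,0,\ldots,0)}),\pi_{\iota(b)}\bigr)\simeq\pi_b\boxtimes\Sym^2\varphi=\pi_b\boxtimes(r_{(2,0,\ldots,0)}\circ\varphi),
\]
and likewise $R\Hom_{J_{\iota(b)}(F)}(R\Gamma_{\mathrm c}(\Sht_{b,\iota(b)}^{(1,1,0,\ldots,0)}),\pi_{\iota(b)})\simeq\pi_b\boxtimes\bigwedge^2\varphi=\pi_b\boxtimes(r_{(1,1,0,\ldots,0)}\circ\varphi)$, which is Conjecture \ref{conj:DKot} in case \ref{en:d2} (and, via \eqref{eq:invmor}, in case \ref{en:d-2}). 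The delicate point here --- besides the minuscule input of case \ref{en:d1} --- is to check that the whole chain of identifications (Proposition \ref{prop:decomp}, Lemma \ref{lem:RHom}, case \ref{en:d1}) really is compatible with both involutions, so that $\Sym^2$ is paired with the correct geometric summand; this is precisely the role of Proposition \ref{prop:invcomp}.
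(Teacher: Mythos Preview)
Your proof follows essentially the same route as the paper: reduce \ref{en:d-1} and \ref{en:d-2} to \ref{en:d1} and \ref{en:d2} via \eqref{eq:invmor}, normalize via Proposition~\ref{prop:twist} and Lemmas~\ref{lem:dualiso}--\ref{lem:contr}, invoke \cite{DatLTel} for the minuscule case~\ref{en:d1}, and deduce case~\ref{en:d2} from two applications of case~\ref{en:d1} via Proposition~\ref{prop:decomp}, Lemma~\ref{lem:RHom}, and the involution compatibility of Proposition~\ref{prop:invcomp} to separate the $\Sym^2$ and $\bigwedge^2$ pieces. Two small corrections: the $\GL_1$-shtuka in case~\ref{en:d0} is a torsor under $F^{\times}$ and is computed directly rather than via Lemma~\ref{lem:cohAD} (which concerns affine spaces), and for case~\ref{en:d1} the paper specifically extracts the degree concentration from the \emph{proof} of \cite[Th\'eor\`eme~A]{DatLTel} (namely \cite[Th\'eor\`eme~4.1.2]{DatLTel}), which is what upgrades the virtual identity to the derived statement of Conjecture~\ref{conj:DKot}---your sketch via Harris--Viehmann and \cite{FaScGeomLLC} does not obviously give this concentration.
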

\begin{proof}
By the inversing isomorphism 
\eqref{eq:invmor}, 
the claims \ref{en:d-1} and \ref{en:d-2} 
are reduced to 
the claims \ref{en:d1} and \ref{en:d2}. 
By Proposition \ref{prop:twist}, 
we may assume that 
$\kappa (b) = \kappa (b')=0$ in \ref{en:d0}, 
$\kappa (b) = 0,-1$, $\kappa (b) = \kappa (b') + 1$ in \ref{en:d1} 
and 
$\kappa (b) = -1$, $\kappa (b') = 1$ in \ref{en:d2}. 
Further, we may assume that 
$\kappa (b) = 0$ in \ref{en:d1} 
by Lemma \ref{lem:dualiso} and Lemma \ref{lem:contr}. 
Then the claim \ref{en:d0} is trivial. 
The claim \ref{en:d1} follows from 
the proof of \cite[Tho\'er\`eme A]{DatLTel} 
taking care the degree in \cite[Tho\'er\`eme 4.1.2]{DatLTel}. 

We show the claim \ref{en:d2}. 
We may assume that $b'=\iota(b)$. 
We put 
\[
 \mu_1 =(1,0,\ldots,0), \ 
 \mu_2=(2,0,\ldots,0), \ 
 \mu_{1,1}=(1,1,0,\ldots,0). 
\]
Note that we have $I_{b,\iota(b)}^{(\mu_1,\mu_1)} =\{ [1] \}$. 
Let $\pi_1$ be 
the irreducible smooth representations of 
$\GL_n(F)$ corresponding to 
$\varphi$ via the local Langlands correspondence. 
By Proposition \ref{prop:decomp}, Lemma \ref{lem:RHom}, 
the claim \ref{en:d1} and \cite[Corollaire 4.2.1]{DatLTel}, 
we have 
\begin{align*}
 (V_{(\mu_1,\mu_1)}^{\mu_2}&)^* \otimes 
 \cH^*  \left( R\Hom_{G_{\iota(b)}(F)}  
 \left( R\Gamma_{\mathrm{c}} (\Sht_{\iota(b),b}^{\mu_2}),\pi_{\iota(b)} \right) \right)  \\ 
 +(&V_{(\mu_1,\mu_1)}^{\mu_{1,1}})^* \otimes 
 \cH^*  \left( R\Hom_{G_{\iota(b)}(F)} 
 \left( R\Gamma_{\mathrm{c}} (\Sht_{\iota(b),b}^{\mu_{1,1}}),\pi_{\iota(b)} \right) \right) \\ 
 &\simeq 
 \cH^* \left( R\Hom_{G_{\iota(b)}(F)} \left( R\Gamma_{\mathrm{c}} (\Sht_{\iota(b),1}^{\mu_1}) \otimes 
 R\Gamma_{\mathrm{c}} (\Sht_{1,b}^{\mu_1}) \otimes_{\GL_n(F)}^{\bL} 
 \ol{\bQ}_{\ell} ,\pi_{\iota(b)} \right) \right) \\ 
 &\simeq 
 \cH^* \left( R\Hom_{\GL_n (F)} \left( 
 R\Gamma_{\mathrm{c}} (\Sht_{1,b}^{\mu_1}) ,
 R\Hom_{G_{\iota(b)}(F)} 
 \left( R\Gamma_{\mathrm{c}} (\Sht_{{\iota(b)},1}^{\mu_1}),\pi_{\iota(b)} \right) \right) \right) \\ 
 &\simeq 
\cH^* \left( R\Hom_{\GL_n (F)} \left( 
R\Gamma_{\mathrm{c}} (\Sht_{1,b}^{\mu_1}) ,
\cH^* \left( R\Hom_{G_{\iota(b)}(F)} 
\left( R\Gamma_{\mathrm{c}} (\Sht_{{\iota(b)},1}^{\mu_1}),\pi_{\iota(b)} \right) \right) \right) \right) \\ 
 &\simeq 
 \cH^* \left( R\Hom_{\GL_n (F)} \left( 
 R\Gamma_{\mathrm{c}} (\Sht_{1,b}^{\mu_1}) ,
 \pi_1 \boxtimes \varphi 
 \right) \right) \\ 
 &\simeq \pi_{b} \boxtimes (\varphi \otimes \varphi) 
 \simeq \pi_{b} \boxtimes \left( 
 (r_{\mu_2} \circ \varphi) \oplus (r_{\mu_{1,1}} \circ \varphi) 
 \right). 
\end{align*}
Using Proposition \ref{prop:invcomp}, 
we can separate the above equality to obtain the claim. 
\end{proof}

\begin{cor}\label{cor:minu}
Conjecture \ref{conj:DKot} is true if $n \leq 3$ and 
$\mu$ is minuscule. 
\end{cor}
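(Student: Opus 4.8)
The plan is to deduce Corollary \ref{cor:minu} directly from Theorem \ref{thm:Kcases}: the only work is to check that, once the Kottwitz constraint is taken into account, every minuscule cocharacter of $\GL_n$ with $n\le 3$ falls under one of the five shapes listed there. First I would record that, since $[b]\in B(\GL_n,\mu,[b'])$ is part of the hypotheses of Conjecture \ref{conj:DKot}, Lemma \ref{lem:BGbij} together with the definition of $B(\GL_n,\mu)$ forces $\kappa(b)=\kappa(b')+\sum_i\mu_i$; write $d=\sum_i\mu_i$. Then I would enumerate the dominant minuscule cocharacters. For $\GL_1$ every $\mu=(d)$ is central, so case \ref{en:d0} applies with $\mu=\frac{\kappa(b)-\kappa(b')}{1}(1)$. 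For $\GL_2$ they are the central $(c,c)$ (case \ref{en:d0}) and $(c+1,c)$; in the latter $d=2c+1$, so $\mu=\tfrac{\kappa(b)-\kappa(b')-1}{2}(1,1)+(1,0)$ and the residue hypothesis $\kappa(b)\equiv 0,1\bmod 2$ of case \ref{en:d1} is vacuous. For $\GL_3$ the dominant minuscule cocharacters are, up to central twist, $(c,c,c)$ (case \ref{en:d0}), $(c+1,c,c)$, and $(c+1,c+1,c)$.

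The key point for $\GL_3$ is that each of the two noncentral families can be written in \emph{two} of the shapes occurring in Theorem \ref{thm:Kcases}, and the corresponding residue conditions on $\kappa(b)$ modulo $3$ jointly exhaust $\bZ/3\bZ$. For $\mu=(c+1,c,c)$ one has $d=3c+1$; writing $\mu=\tfrac{\kappa(b)-\kappa(b')-1}{3}(1,1,1)+(1,0,0)$ puts it in case \ref{en:d1}, covering $\kappa(b)\equiv 0,1\bmod 3$, while writing $\mu=\tfrac{\kappa(b)-\kappa(b')+2}{3}(1,1,1)+(0,-1,-1)$ puts it in case \ref{en:d-2} with the $(0,\dots,0,-1,-1)$-option (here $\kappa(b')=\kappa(b)-(3c+1)$, so the conditions $\kappa(b)\equiv -1$, $\kappa(b')\equiv 1\bmod 3$ are consistent and amount to $\kappa(b)\equiv -1\bmod 3$), covering the remaining residue. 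Symmetrically, for $\mu=(c+1,c+1,c)$ one has $d=3c+2$; writing $\mu=\tfrac{\kappa(b)-\kappa(b')+1}{3}(1,1,1)+(0,0,-1)$ puts it in case \ref{en:d-1}, covering $\kappa(b)\equiv 0,-1\bmod 3$, while writing $\mu=\tfrac{\kappa(b)-\kappa(b')-2}{3}(1,1,1)+(1,1,0)$ puts it in case \ref{en:d2} with the $(1,1,0,\dots,0)$-option, covering $\kappa(b)\equiv 1\bmod 3$ (and then $\kappa(b')\equiv -1\bmod 3$ automatically). In each instance the Kottwitz constraint $\kappa(b)-\kappa(b')=d$ makes the rational coefficient of $(1,1,1)$ an integer and renders the displayed residue hypotheses consistent, so the cited case of Theorem \ref{thm:Kcases} applies verbatim; this exhausts all minuscule $\mu$ with $n\le 3$.

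I expect no genuine obstacle: the argument is pure bookkeeping with $\kappa(b)$, $\kappa(b')$ and $d$, and one does not even need the inversing morphism \eqref{eq:invmor} or the duality morphisms of \S\ref{sec:Dual} here — the five shapes already in Theorem \ref{thm:Kcases} suffice. The only step requiring a little care is to confirm, in the $\GL_3$ case, that the two presentations of each noncentral family really cover all three residues of $\kappa(b)$ modulo $3$; the $\GL_1$ and $\GL_2$ cases are immediate, and the non-minuscule options $(2,0,\dots,0)$ in cases \ref{en:d2} and \ref{en:d-2} play no role.
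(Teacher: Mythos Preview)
Your proposal is correct and takes essentially the same approach as the paper, whose proof is the single sentence ``All the cases are contained in Theorem \ref{thm:Kcases}.'' You have simply spelled out the case-check that the paper leaves to the reader, and your verification that the residue conditions in cases \ref{en:d1}--\ref{en:d-2} jointly cover $\bZ/3\bZ$ for each noncentral minuscule $\mu$ of $\GL_3$ is accurate.
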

\begin{proof}
All the cases are contained in Theorem \ref{thm:Kcases}. 
\end{proof}

\section{Inductive formula}\label{sec:ind}
For a smooth representation $\pi$ of $G(F)$ 
and the unipotent radical $N$ of a parabolic 
subgroup of $G$, 
let $\pi_N$ denote the Jacquet module of 
$\pi$ with respect to $N$. 

Assume that $G=\GL_2$. 
Let $T$ be the diagonal torus and 
$B$ be the upper triangle Borel subgroup of 
$\GL_2$. 
Let $N$ be the unipotent radical of $B$, 
and $N^{\mathrm{op}}$ be the 
the unipotent radical of the opposite 
Borel subgroup $B^{\mathrm{op}}$. 
Let $\delta_B \colon T(F) \to \ol{\bQ}_{\ell}^{\times}$ be the 
modulus character with respect to $B$. 
For 
$b=\begin{pmatrix}
 \varpi^{m} & 0 \\ 
 0 & \varpi^l 
 \end{pmatrix}$ 
with $m <l$, let $\delta_b \colon G_b(F) \to \ol{\bQ}_{\ell}^{\times}$ 
be the character determined by 
$\delta_B$ and the natural isomorphism $G_b(F) \cong T(F)$. 

\begin{lem}\label{lem:(1,0)pi}
Let $m \in \bZ$. 
We put 
\[
 b=\begin{pmatrix}
 \varpi^{m} & 0 \\ 
 0 & \varpi^m 
 \end{pmatrix}, \quad 
 b'=\begin{pmatrix}
 \varpi^{m-1} & 0 \\ 
 0 & \varpi^m 
 \end{pmatrix} . 
\] 
Let $\pi$ be an admissible representation of $G(F)$. 
Then we have 
\[
  R^{\bullet}\Hom_{G (F)}  
 \left( R^{\bullet}\Gamma_{\mathrm{c}} (\Sht_{b,b'}^{(1,0)}),\pi 
 \right) = 
 -R^{\bullet}\Hom_{T (F)} 
 \left(  
 R^{\bullet}\Gamma_{\mathrm{c}} (\Sht_{T,b,b'}^{(1,0)}), \pi_{N^{\mathrm{op}}} 
 \right) \left( \frac{1}{2} \right) . 
\]
\end{lem}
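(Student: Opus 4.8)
The plan is to reduce to a Hodge--Newton reducible situation and combine the Harris--Viehmann theorem with Bernstein's second adjointness.

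First I would normalise the problem. Since $b'=\varpi^{m}\cdot\mathrm{id}$ is central, hence basic with $J_{b'}=\GL_2$, Proposition~\ref{prop:Mtriv} gives a $\wt{J}_b\times\wt{J}_{b'}$-equivariant isomorphism $\Sht_{\GL_2,b,b'}^{(1,0)}\simeq\Sht_{\GL_2,bb'^{-1},1}^{(1,0)}$ with $bb'^{-1}=\mathrm{diag}(\varpi,1)$, and likewise for the torus; applying Proposition~\ref{prop:twist} to $Z^{0}=\mathbb{G}_m$ one may further reduce to $m=0$ if convenient. Note that $J_b(F)=T(F)$ because $b$ has regular Newton point, so both sides of the asserted identity are virtual representations of $T(F)\times W_F$, and that $E=F$. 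The pair $(\mathrm{diag}(\varpi,1),(1,0))$ is Hodge--Newton reducible with associated Levi the full torus $T$: indeed $\nu_{\mathrm{diag}(\varpi,1)}=(1,0)=\mu$, so $b$ is $\mu$-ordinary, and the relevant parabolic is the Borel $B$ whose flag is fixed by the Newton cocharacter of $b$.

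Next I would invoke the Harris--Viehmann theorem for moduli of mixed characteristic local shtukas in the Hodge--Newton reducible case (\cite{GINsemi}, \cite{HanHarr}), applied to $\Sht_{\GL_2,\mathrm{diag}(\varpi,1),1}^{(1,0)}$ with Levi $T$. It yields, in the derived category of representations of $\GL_2(F)\times W_F$, an isomorphism
\[
 R\Gamma_{\mathrm c}\bigl(\Sht_{\GL_2,\mathrm{diag}(\varpi,1),1}^{(1,0)}\bigr)\simeq
 \Ind_{B(F)}^{\GL_2(F)}\Bigl(R\Gamma_{\mathrm c}\bigl(\Sht_{T,\mathrm{diag}(\varpi,1),1}^{(1,0)}\bigr)\otimes\chi\Bigr)[d],
\]
where $\Ind$ is normalised parabolic induction, $[d]$ is a cohomological shift with $d$ odd (this is what produces the overall minus sign) and $\chi$ is a Tate-twist-times-modulus character. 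The numerology here is forced by comparing the Satake normalisations $\IC_{(1,0)}=\ol{\bQ}_\ell[\langle 2\rho_G,(1,0)\rangle](\langle\rho_G,(1,0)\rangle)$ on $\GL_2$ against the trivially normalised $\IC$ on $T$, together with the shift governing the $\wt{J}^{>0}$-quotient from Lemma~\ref{lem:Jbnat} and the $R\Gamma_{\mathrm c}$ of the affine unipotent directions of $B$. Alternatively this isomorphism can be proved here directly: the admissible locus of $\Sht_{\GL_2,\mathrm{diag}(\varpi,1),1}^{(1,0)}$ inside $\Gr_{\GL_2,\leq(1,0)}=\mathbb{P}^1_{\Spd\breve{E}}$ is the affine line obtained by deleting the $T$-fixed point that would give a non-semistable modification, and it is stratified into the remaining $T$-fixed point --- the ``Levi locus'', which carries $\Sht_{T,\mathrm{diag}(\varpi,1),1}^{(1,0)}$ --- and its open $\mathbb{G}_m$-complement; excision together with the convolution description of the closed stratum from \S\ref{sec:conv} identifies $R\Gamma_{\mathrm c}$ with the parabolic induction above.

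Finally I would apply $R\Hom_{\GL_2(F)}(-,\pi)$ to this isomorphism. By Bernstein's second adjointness $\Ind_{B(F)}^{\GL_2(F)}$ is right adjoint to the normalised Jacquet functor along the opposite Borel, so
\[
 R\Hom_{\GL_2(F)}\Bigl(\Ind_{B(F)}^{\GL_2(F)}(\sigma),\pi\Bigr)\simeq
 R\Hom_{T(F)}\bigl(\sigma,\ \pi_{N^{\mathrm{op}}}\otimes\delta_B^{1/2}\bigr),
\]
which introduces exactly the Jacquet module $\pi_{N^{\mathrm{op}}}$ appearing in the statement; carrying the shift $[d]$ through turns $R\Hom$ into its negative in the Grothendieck group, and carrying $\chi$ through, once combined with the modulus characters, collapses to the twist $\delta_B^{-1}(\tfrac12)$ in the statement. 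To conclude I would undo the first reduction: $\Sht_{T,\mathrm{diag}(\varpi,1),1}^{(1,0)}$ is a $\ul{T(F)}$-torsor over $\Spd\breve{E}$, so $R\Hom_{T(F)}(R\Gamma_{\mathrm c}(\Sht_{T,b,b'}^{(1,0)}),-)$ is the identity functor (compact induction from the trivial subgroup), and restoring $b,b'$ recovers the right-hand side.

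The main obstacle is the numerology of the second paragraph: pinning down the exact shift $[d]$ and twist $\chi$ in the Harris--Viehmann isomorphism under the $\IC$-normalisation used in this paper, so that after the adjunction the residual twist is exactly $\delta_B^{-1}$ --- and not some other half-integral power of $\delta_B$ --- and the Tate twist is exactly $(\tfrac12)$, together with checking compatibility with the $W_F$-actions; in particular the $W_F$-action on the right-hand side must be trivial apart from the $(\tfrac12)$-twist, as is forced on the left since the torus $\IC$ is untwisted. Everything else is formal.
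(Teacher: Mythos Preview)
Your approach is correct and closely parallels the paper's, with one structural difference worth noting. Both proofs rest on the Harris--Viehmann theorem \cite[Theorem~4.25]{GINsemi}, \cite{HanHarr} applied to the Hodge--Newton reducible pair $(\mathrm{diag}(\varpi,1),(1,0))$; the divergence is in how the adjunction is run. The paper first passes to contragredients via \cite[A.11, A.12]{CasNewnonuni}, applies Harris--Viehmann to the dual, uses \emph{first} Frobenius reciprocity to convert $\Hom_G(\pi^*,\Ind_B^G(-))$ into $\Hom_T((\pi^*)_N,-)$, and then undoes the duality using Casselman's relation between $(\pi^*)_N$ and $(\pi_{N^{\mathrm{op}}})^*$. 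You instead apply Harris--Viehmann directly and then invoke Bernstein's second adjointness. These are equivalent: the Casselman duality step in the paper's proof is precisely what underlies second adjointness, so the paper is in effect unfolding that adjunction by hand. Your route is more streamlined; the paper's has the advantage that the shift and twist are pinned down by the cited references rather than left as a numerology check.

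Two small points. First, a terminological slip: you write that $\Ind_B^G$ is \emph{right} adjoint to the opposite Jacquet functor, but the formula you display, $R\Hom_G(\Ind_B^G\sigma,\pi)\simeq R\Hom_T(\sigma,\pi_{N^{\mathrm{op}}}\otimes\delta_B^{1/2})$, exhibits $\Ind_B^G$ as the \emph{left} adjoint. The formula is the correct one. Second, your honest caveat about the numerology is the only genuine content left to verify; once you fix conventions for normalised induction and the $\IC$-normalisation, the residual twist $\delta_B^{-1}(\tfrac12)$ and the sign $(-1)$ are forced, and your direct geometric argument via excision on the affine line inside $\mathbb{P}^1$ is a clean way to nail this down in this low-dimensional case.
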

\begin{proof}
By \cite[A.11 Proposition, A.12 Theorem]{CasNewnonuni}, 
\cite[Theorem 4.25]{GINsemi} 
(\cf \cite{HanHarr}) and 
\cite[III.2.7 Th\'eor\`eme, VI.9.6 Proposition]{RenReprp}, 
we have 
\begin{align*}
 R^{\bullet}\Hom_{G (F)}  
 \left( R^{\bullet}\Gamma_{\mathrm{c}} (\Sht_{b,b'}^{(1,0)}),\pi 
 \right) &= 
 R^{\bullet}\Hom_{G (F)} 
 \left( \pi^*, R^{\bullet}\Gamma_{\mathrm{c}} (\Sht_{b,b'}^{(1,0)})^* 
 \right) \\
 &= R^{\bullet}\Hom_{T (F)} 
 \left( (\pi^*)_N , 
 -R^{\bullet}\Gamma_{\mathrm{c}} (\Sht_{T,b,b'}^{(1,0)})^*  
 \right) \left( \frac{1}{2} \right)\\ 
 &= -R^{\bullet}\Hom_{T (F)} 
 \left( (\pi^*)_N , 
 R^{\bullet}\Gamma_{\mathrm{c}} (\Sht_{T,b,b'}^{(1,0)})^* 
 \right) \left( \frac{1}{2} \right) \\ 
 &= -R^{\bullet}\Hom_{T (F)} 
 \left(  
 R^{\bullet}\Gamma_{\mathrm{c}} (\Sht_{T,b,b'}^{(1,0)}), \pi_{N^{\mathrm{op}}} 
 \right) \left( \frac{1}{2} \right) . 
\end{align*}
\end{proof}

\begin{prop}\label{prop:indf}
Let $\chi_1, \chi_2 \colon F^{\times} \to \ol{\bQ}_{\ell}^{\times}$ 
be characters. 
Let $\varphi_{\chi_i} \colon W_F \to \ol{\bQ}_{\ell}^{\times}$ 
be the character corresponding to $\chi_i$. 
We put $\rho=\chi_1 \boxtimes \chi_2$ as representations of $T (F)$. 
Let $m \geq 0$ and $m/2 \geq l \geq 0$. 
We put 
\[
 b=\begin{pmatrix}
 \varpi^{l} & 0 \\ 
 0 & \varpi^{m-l} 
 \end{pmatrix}, \quad 
 b_1=\begin{pmatrix}
 \varpi^{l-1} & 0 \\ 
 0 & \varpi^{m-l} 
 \end{pmatrix}, \quad 
 b_2=\begin{pmatrix}
 \varpi^{l-1} & 0 \\ 
 0 & \varpi^{m-1-l} 
 \end{pmatrix} . 
\] 
\begin{enumerate}
\item\label{enu:neq2m}
Assume $m \neq 2l$. 
We put 
\[
 b_1'=\begin{pmatrix}
 \varpi^{l} & 0 \\ 
 0 & \varpi^{m-l-1} 
 \end{pmatrix}. 
\]
If $l=0$, then we have 
\[
 H_{\mathrm{c}}^{\bullet} (\Sht_{b,1}^{(m,0)})[\rho] = 
 (-1)^m ( \Ind_{B(F)}^{G(F)} \rho ) 
 \boxtimes 
 \varphi_{\chi_2}^m  
 \left( \frac{m}{2} \right) . 
\]
If $l \geq 1$, then we have 
\begin{align*}
 H_{\mathrm{c}}^{\bullet} & (\Sht_{b,1}^{(m,0)})[\rho ] \\ 
 &= 
 - H_{\mathrm{c}}^{\bullet} 
 (\Sht_{b_1,1}^{(m-1,0)}) [\rho ] 
 \otimes \varphi_{\chi_1} \left( - \frac{1}{2} \right) 
 -
 H_{\mathrm{c}}^{\bullet} (\Sht_{b_2,1}^{(m-2,0)})[\rho] 
 \otimes \varphi_{\chi_1} \otimes \varphi_{\chi_2} \\ 
 & \quad 
 \begin{cases}
 - H_{\mathrm{c}}^{\bullet} (\Sht_{b_1',1}^{(m-1,0)}) 
 [ \Ind_{B(F)}^{G(F)} \rho ] 
 \otimes 
 \varphi_{\chi_2} \left( \frac{1}{2} \right) 
 & \textrm{if}\  m=2l+1 \\ 
 - H_{\mathrm{c}}^{\bullet} (\Sht_{b_1',1}^{(m-1,0)}) [\rho ] 
 \otimes 
 \varphi_{\chi_2} \left( \frac{1}{2} \right) 
 & \textrm{if}\  m \geq 2l+2. 
 \end{cases}
\end{align*} 
\item\label{enu:eq2m} 
Assume $m = 2l$. 
If $l=0$, then we have 
\[
 H_{\mathrm{c}}^{\bullet} (\Sht_{b,1}^{(0,0)})[\Ind_{B(F)}^{G(F)} \rho] 
 = (\Ind_{B(F)}^{G(F)} \rho ) \boxtimes 1 . 
\]
If $l \geq 1$, then we have 
\begin{align*}
 H_{\mathrm{c}}^{\bullet} (\Sht_{b,1}^{(m,0)})[\Ind_{B(F)}^{G(F)} \rho] 
 = {} & 
 -H_{\mathrm{c}}^{\bullet} (\Sht_{b_1,1}^{(m-1,0)}) [\rho ] 
 \otimes \varphi_{\chi_1} 
 \left( -\frac{1}{2} \right) \\ 
 & -H_{\mathrm{c}}^{\bullet} (\Sht_{b_1,1}^{(m-1,0)}) [
 \rho^w \otimes \delta_B^{-1} ]
 \otimes 
 \varphi_{\chi_2} 
 \left( \frac{1}{2} \right) \\ 
 & -H_{\mathrm{c}}^{\bullet} 
 (\Sht_{b_2,1}^{(m-2,0)}) [\Ind_{B(F)}^{G(F)} \rho] 
 \otimes \varphi_{\chi_1} \otimes \varphi_{\chi_2}. 
\end{align*} 
\end{enumerate}
\end{prop}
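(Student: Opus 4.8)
The argument is by induction, the ordering chosen so that the data $b_1$ (of the form ``$(l-1,m-1)$''), $b_2$ (``$(l-1,m-2)$'') and $b_1'$ (``$(l,m-1)$'', degenerating to a basic element when $m=2l+1$) all lie strictly below $(l,m)$; the base is the Hodge--Newton reducible range $l=0$ (together with the trivial point $m=2l=0$). For $l=0$ one has $b=\mathrm{diag}(1,\varpi^{-m})$ and $\Sht_{1,b}^{(m,0)}$ is Hodge--Newton reducible for $(m,0)$, so by \cite[Theorem 4.25]{GINsemi} (\cf \cite{HanHarr}), together with the $\GL_1$-computation of $\Sht_{T,\dots}^{(m,0)}$ (a union of points, contributing a copy of $C_{\mathrm{c}}^\infty(T(F))$ and the Tate twist $(m/2)$) and the Jacquet/$\delta_B$ correction exactly as in Lemma \ref{lem:(1,0)pi}, one obtains the closed formula, the sign $(-1)^m$ being the parity of the relevant shift.

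For the inductive step, apply the derived form of Proposition \ref{prop:decomp} to $(b_0,b_m)=(1,b)$ with $\mu_{\bullet}=((m-1,0),(1,0))$. Since $V_{(m-1,0)}\otimes V_{(1,0)}=V_{(m,0)}\oplus V_{(m-1,1)}$ for $\GL_2$, with one-dimensional multiplicity spaces and trivial $\Gamma_F$-action, this gives in $\mathrm{Groth}(G(F)\times J_b(F)\times W_F)$
\[
 H_{\mathrm{c}}^*(\Sht_{1,b}^{(m,0)})+H_{\mathrm{c}}^*(\Sht_{1,b}^{(m-1,1)})=\sum_{[b']\in I_{1,b}^{((m-1,0),(1,0))}}H_*\!\left(J_{b'}(F),\ H_{\mathrm{c}}^*(\Sht_{1,b'}^{(m-1,0)})\otimes H_{\mathrm{c}}^*(\Sht_{b',b}^{(1,0)})\right).
\]
The term $H_{\mathrm{c}}^*(\Sht_{1,b}^{(m-1,1)})$ is eliminated via the twist morphism of \S\ref{sec:Twi} (Proposition \ref{prop:twist}) for the central cocharacter: since $b_2=\varpi b$ and $J_{b_2}=J_b$, it gets identified with $H_{\mathrm{c}}^*(\Sht_{1,b_2}^{(m-2,0)})$ twisted by the one-dimensional $W_F$-representation $\varphi_{\chi_1}\otimes\varphi_{\chi_2}$, read off from the $\bG_m$-shtuka point. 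The index set is then pinned down by $\GL_2$ Newton-polygon combinatorics and Lemma \ref{lem:Mempty}: the admissible intermediate data have Newton slopes $(m-l,l-1)$ and $(m-l-1,l)$, i.e. $[b_1]$ (present iff $l\ge 1$) and $[b_1']$; these coincide exactly when $m=2l$, which is the source of the dichotomy in the statement.

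Now apply $R\Hom_{J_b(F)}(-,\pi)$ with $\pi=\rho$ if $m>2l$ and $\pi=\Ind_B^G\rho$ if $m=2l$, and use Lemma \ref{lem:RHom} to rewrite each summand as $R\Hom_{J_{b'}(F)}(R\Gamma_{\mathrm{c}}(\Sht_{1,b'}^{(m-1,0)}),\ R\Hom_{J_b(F)}(R\Gamma_{\mathrm{c}}(\Sht_{b',b}^{(1,0)}),\pi))$. The inner term is the minuscule second leg. When $m=2l$ ($b$ central), Lemma \ref{lem:(1,0)pi} applies with $m_0=-l$, producing $-R\Hom_{T(F)}(R\Gamma_{\mathrm{c}}(\Sht_{T,b_1,b}^{(1,0)}),(\Ind_B^G\rho)_{N^{\mathrm{op}}}\otimes\delta_B^{-1})(\tfrac12)$; using that $(\Ind_B^G\rho)_{N^{\mathrm{op}}}$ has a two-step filtration with graded pieces $\rho$ and $\rho^{w}\otimes\delta_B^{-1}$ (Casselman), and that $R\Gamma_{\mathrm{c}}(\Sht_{T,b_1,b}^{(1,0)})\cong C_{\mathrm{c}}^\infty(T(F))$ on which the Weil action is intertwined with the $T(F)$-action through local class field theory, this splits into the two contributions with targets $\rho$ (twist $\varphi_{\chi_1}(\tfrac12)$) and $\rho^{w}\otimes\delta_B^{-1}$ (twist $\varphi_{\chi_2}(\tfrac32)$). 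When $m>2l$ ($b$ non-basic, $J_b=T$), $\Sht_{b',b}^{(1,0)}$ is Hodge--Newton reducible along the canonical filtration of $b$ (for $b'=b_1$) or, after the inversing morphism \eqref{eq:invmor}, is brought to Hodge--Newton reducible form for the non-basic first index (for $b'=b_1'$; when $m=2l+1$, where $b_1'$ is basic, via Proposition \ref{prop:Mtriv}), so the inner $R\Hom$ is a one-dimensional $W_F$-twist of $\rho$ (twist $\varphi_{\chi_1}(-\tfrac12)$ for $b'=b_1$; $\varphi_{\chi_2}(-\tfrac12)$ for $b'=b_1'$ with $m\ge 2l+2$) or of $\Ind_B^G(\rho\otimes\delta_B^{-1})$ (twist $\varphi_{\chi_2}(\tfrac12)$, precisely when $m=2l+1$). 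Finally the outer $R\Hom$ against $R\Gamma_{\mathrm{c}}(\Sht_{1,b'}^{(m-1,0)})$, with $b'$ now the datum $(l-1,m-1)$, $(l,m-1)$ (basic if $m=2l+1$), or $(l-1,m-2)$ --- each strictly below $(l,m)$ --- produces $H_{\mathrm{c}}^\bullet(\Sht_{1,b_1}^{(m-1,0)})[\rho]$, $H_{\mathrm{c}}^\bullet(\Sht_{1,b_1'}^{(m-1,0)})[\,\cdot\,]$ and $H_{\mathrm{c}}^\bullet(\Sht_{1,b_2}^{(m-2,0)})[\,\cdot\,]$ by the inductive hypothesis; taking Euler characteristics and collecting the shifts $[2N_{b'}]$ (which are even) together with the sign from Lemma \ref{lem:(1,0)pi} yields the stated formulas.

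The main obstacle is precisely the bookkeeping of the $W_F$- and Tate-twists and signs: determining $I_{1,b}^{((m-1,0),(1,0))}$, tracking the factors $\varphi_{\chi_1}(-\tfrac12)$, $\varphi_{\chi_1}\otimes\varphi_{\chi_2}$, $\varphi_{\chi_2}(\pm\tfrac12)$, $\varphi_{\chi_2}(\tfrac32)$ through the composite of the twist morphism, the inversing morphism, Proposition \ref{prop:Mtriv} and Lemma \ref{lem:(1,0)pi}, and correctly describing the non-basic second leg $\Sht_{b',b}^{(1,0)}$ and the Jacquet-module splitting that produces the extra term in \ref{enu:eq2m}. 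The geometric inputs are all in place; the effort lies in making these identifications exact.
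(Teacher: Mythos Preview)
Your overall strategy matches the paper's: apply Proposition \ref{prop:decomp} with $\mu_\bullet=((m-1,0),(1,0))$, use Lemma \ref{lem:RHom} to factor through the minuscule second leg, eliminate the $(m-1,1)$ term via the central twist, and handle the $l=0$ base case by Hodge--Newton reducibility. The computations of the inner $R\Hom$ terms and the Jacquet--module splitting in part \ref{enu:eq2m} are also as in the paper (though the paper computes $R\Gamma_{\mathrm{c}}(\Sht_{b_1,b}^{(1,0)})$ directly via the fibration to $\Sht_T$ with fibre $\bB^{\varphi=\varpi^k}$ rather than invoking Hodge--Newton reducibility in name).

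There is one genuine gap, in part \ref{enu:eq2m}. You determine $I_{1,b}^{((m-1,0),(1,0))}$ by listing the two non-basic Newton strata $[b_1]$ and $[b_1']$ (in the notation of part \ref{enu:neq2m}) and then observe that they coincide when $m=2l$. But when $m=2l$ the element $b$ is central, and the index set also contains the \emph{basic} class with $\kappa=1-2l$: both $\Sht_{b_1',b}^{(1,0)}$ (a twist of Lubin--Tate) and $\Sht_{1,b_1'}^{(m-1,0)}$ are nonempty for this basic $b_1'$. The paper introduces this extra element explicitly (it redefines $b_1'=\begin{pmatrix}0 & \varpi^{1-l}\\ \varpi^{-l} & 0\end{pmatrix}$ in the proof of \ref{enu:eq2m}) and then shows its contribution vanishes:
\[
 R^{\bullet}\Hom_{G(F)}\!\left(R\Gamma_{\mathrm{c}}(\Sht_{b_1',b}^{(1,0)}),\ \Ind_{B(F)}^{G(F)}\rho\right)=0
\]
by \cite[Th\'eor\`eme A]{DatLTel}, since $\Ind_{B(F)}^{G(F)}\rho$ is a principal series and hence has no Jacquet--Langlands transfer to $J_{b_1'}(F)$. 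Without this vanishing step your accounting of the right-hand side in \ref{enu:eq2m} is incomplete, and the stated formula would not follow. Once you insert this argument, the rest of your plan goes through.
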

\begin{proof}
First we show the claim \ref{enu:neq2m}. 
If $l=0$, we have 
\begin{align*}
 R^{\bullet}  \Hom_{G_b (F)} & 
 \left( R\Gamma_{\mathrm{c}} (\Sht_{b,1}^{(m,0)}),\rho \right) 
 = R^{\bullet}  \Hom_{G_b (F)} 
 \left( R\Gamma_{\mathrm{c}} (\Sht_{1,b}^{(0,-m)}),\rho \right) \\ 
 &= 
 (-1)^m R^{\bullet}  \Hom_{G_b (F)} 
 \left( \Ind_{B(F)}^{G(F)} 
 R^{\bullet} \Gamma_{\mathrm{c}} (\Sht_{T,1,b}^{(0,-m)})
 \left( \frac{m}{2} \right) \otimes \delta_b^{-1},\rho \right) \\ 
 &=
 (-1)^m \Ind_{B(F)}^{G(F)} \left( R^{\bullet} \Hom_{G_b (F)} 
 \left(  
 R^{\bullet} \Gamma_{\mathrm{c}} (\Sht_{T,b,1}^{(0,m)})
 \left( \frac{m}{2} \right) \otimes \delta_b^{-1},\rho \right) 
 \otimes \delta_B^{-1} \right) \\ 
 &=
 (-1)^m \Ind_{B(F)}^{G(F)} \left( R^{\bullet} \Hom_{G_b (F)} 
 \left(  
 R^{\bullet} \Gamma_{\mathrm{c}} (\Sht_{T,b,1}^{(0,m)})
 \left( \frac{m}{2} \right),\rho \otimes \delta_b \right) 
 \otimes \delta_B^{-1} \right) \\ 
 &= 
 (-1)^m ( \Ind_{B(F)}^{G(F)} \rho ) 
 \boxtimes 
 \varphi_{\chi_2}^m \left( \frac{m}{2} \right), 
\end{align*}
where we use $\Sht_{1,b}^{(m-1,1)}=\emptyset$ and 
\cite[Theorem 4.25]{GINsemi} at the second equality. 
We assume that $l \geq 1$. 
By Proposition \ref{prop:decomp} and Lemma \ref{lem:RHom}, 
the sum 
\[
 R^{\bullet} \Hom_{G_b (F)} 
 \left( R\Gamma_{\mathrm{c}} (\Sht_{b,1}^{(m,0)}), \rho 
 \right) + 
 R^{\bullet} \Hom_{G_b (F)} 
 \left( R\Gamma_{\mathrm{c}} (\Sht_{b,1}^{(m-1,1)}), \rho 
 \right)
\]
is equal to the sum 
\begin{align*}
 R^{\bullet} & \Hom_{G_{b_1}(F)} \left( 
 R\Gamma_{\mathrm{c}} (\Sht_{b_1,1}^{(m-1,0)}) ,
 R\Hom_{G_b (F)} 
 \left( R\Gamma_{\mathrm{c}} (\Sht_{b,b_1}^{(1,0)}),\rho \right) \otimes \delta_{b_1}^{-1} \right) \\ 
 &+R^{\bullet} \Hom_{G_{b_1'}(F)} \left( 
 R\Gamma_{\mathrm{c}} (\Sht_{b_1',1}^{(m-1,0)}) ,
 R\Hom_{G_b (F)} 
 \left( R\Gamma_{\mathrm{c}} (\Sht_{b,b_1'}^{(1,0)}),\rho \right) \otimes \delta_{b_1'}^{-1}  \right). 
\end{align*}

Since the fiber of the natural morphism 
$\Sht_{b,b_1}^{(1,0)} \to \Sht_{T,b,b_1}^{(1,0)}$ 
is isomorphic to $\mathbb{B}^{\varphi=\varpi^{m+1-2l}}$, 
we have 
\[
 R^{\bullet} \Hom_{G_b (F)} 
 \left( R\Gamma_{\mathrm{c}} (\Sht_{b,b_1}^{(1,0)}),\rho \right) 
 = -(\rho \otimes \delta_B ) 
 \boxtimes \varphi_{\chi_1} \left( - \frac{1}{2} \right) .
\]
Further, we have 
\begin{align*}
 R^{\bullet} \Hom_{G_{b_1}(F)} & \left( 
 R\Gamma_{\mathrm{c}} (\Sht_{b_1,1}^{(m-1,0)}) ,
 R\Hom_{G_b (F)} 
 \left( R\Gamma_{\mathrm{c}} (\Sht_{b,b_1}^{(1,0)}),\rho \right) \otimes \delta_{b_1}^{-1} \right) \\ 
 &= - 
 R^{\bullet} \Hom_{G_{b_1}(F)} \left( 
 R\Gamma_{\mathrm{c}} (\Sht_{b_1,1}^{(m-1,0)}) ,
 \rho \right) \boxtimes \varphi_{\chi_1} \left( -\frac{1}{2} \right) . 
\end{align*}

If $m = 2l+1$, 
we have 
\[
 R^{\bullet}  \Hom_{G_b (F)} 
 \left( R\Gamma_{\mathrm{c}} (\Sht_{b,b_1'}^{(1,0)}), 
 \rho \right) 
 = 
 -\left( \Ind_{B(F)}^{G(F)} \rho \right) 
 \boxtimes 
 \varphi_{\chi_2} \left( \frac{1}{2} \right) 
\]
by the claim in the case where $l=0$. 

If $m \geq 2l+2$, 
since the fiber of the natural morphism 
$\Sht_{b,b_1'}^{(1,0)} \to \Sht_{T,b,b_1'}^{(0,1)}$ 
is isomorphic to $\mathbb{B}^{\varphi=\varpi^{m-2l}}$, 
we have 
\begin{align*}
 R^{\bullet}  \Hom_{G_b (F)} 
 \left( R\Gamma_{\mathrm{c}} (\Sht_{b,b_1'}^{(1,0)}),\rho \right)
 &= - 
 (\rho \otimes \delta_B ) \boxtimes 
 \varphi_{\chi_2} \left( \frac{1}{2} \right).
\end{align*}
Therefore
\begin{align*}
 R^{\bullet} & \Hom_{G_b (F)} 
 \left( R\Gamma_{\mathrm{c}} (\Sht_{b,1}^{(m,0)}), \rho 
 \right) \\ 
 = {} & 
 R^{\bullet} \Hom_{G_{b_1}(F)} \left( 
 R\Gamma_{\mathrm{c}} (\Sht_{b_1,1}^{(m-1,0)}) ,
 R\Hom_{G_b (F)} 
 \left( R\Gamma_{\mathrm{c}} (\Sht_{b,b_1}^{(1,0)}),\rho \right) \otimes \delta_{b_1}^{-1} \right) \\ 
 &+R^{\bullet} \Hom_{G_{b_1'}(F)} \left( 
 R\Gamma_{\mathrm{c}} (\Sht_{b_1',1}^{(m-1,0)}) ,
 R\Hom_{G_b (F)} 
 \left( R\Gamma_{\mathrm{c}} (\Sht_{b,b_1'}^{(1,0)}),\rho \right) \otimes \delta_{b_1'}^{-1} \right) \\ 
 &-  R^{\bullet} \Hom_{G_b (F)} 
 \left( R\Gamma_{\mathrm{c}} (\Sht_{b,1}^{(m-1,1)}), \rho 
 \right) \\ 
 = {} & - H_{\mathrm{c}}^{\bullet} (\Sht_{b_1,1}^{(m-1,0)}) 
 [\rho ] 
 \otimes \varphi_{\chi_1} \left( - \frac{1}{2} \right) 
 -
 H_{\mathrm{c}}^{\bullet} (\Sht_{b,1}^{(m-2,0)})[\rho] 
 \otimes \varphi_{\chi_1} \otimes \varphi_{\chi_2} \\ 
 &  
 \begin{cases}
 - H_{\mathrm{c}}^{\bullet} (\Sht_{b_1',1}^{(m-1,0)}) 
 [ \Ind_{B(F)}^{G(F)} \rho ] 
 \otimes 
 \varphi_{\chi_2} \left( \frac{1}{2} \right) 
 & \textrm{if}\  m=2l+1, \\ 
 - H_{\mathrm{c}}^{\bullet} (\Sht_{b_1',1}^{(m-1,0)}) [\rho ] 
 \otimes 
 \varphi_{\chi_2} \left( \frac{1}{2} \right) 
 & \textrm{if}\  m \geq 2l+2 . 
 \end{cases}
\end{align*} 

Next we show the claim \ref{enu:eq2m}. 
The claim is trivial if $l=0$. 
Assume that $l > 0$. 
We put $\pi =\Ind_{B(F)}^{G(F)} \rho$ and 
\[
 b_1'=\begin{pmatrix}
 0 & \varpi^{l-1} \\ 
 \varpi^{l} & 0 
 \end{pmatrix}. 
\]
By Proposition \ref{prop:decomp} and Lemma \ref{lem:RHom}, 
the sum 
\[
 R^{\bullet} \Hom_{G(F)} 
 \left( R\Gamma_{\mathrm{c}} (\Sht_{b,1}^{(m,0)}), \pi 
 \right) + 
 R^{\bullet} \Hom_{G(F)} 
 \left( R\Gamma_{\mathrm{c}} (\Sht_{b,1}^{(m-1,1)}), \pi 
 \right)
\]
is equal to the sum 
\begin{align*}
 R^{\bullet} & \Hom_{G_{b_1}(F)} \left( 
 R\Gamma_{\mathrm{c}} (\Sht_{b_1,1}^{(m-1,0)}) ,
 R\Hom_{G(F)} 
 \left( R\Gamma_{\mathrm{c}} (\Sht_{b,b_1}^{(1,0)}),\pi \right) \otimes \delta_{b_1}^{-1} \right) \\ 
 &+R^{\bullet} \Hom_{G_{b_1'}(F)} \left( 
 R\Gamma_{\mathrm{c}} (\Sht_{b_1',1}^{(m-1,0)}) ,
 R\Hom_{G(F)} 
 \left( R\Gamma_{\mathrm{c}} (\Sht_{b,b_1'}^{(1,0)}),\pi \right) \otimes \delta_{b_1'}^{-1} \right). 
\end{align*}
We have 
\[
 R^{\bullet} \Hom_{G(F)} 
 \left( R\Gamma_{\mathrm{c}} (\Sht_{b,b_1'}^{(1,0)}),\pi \right) =0 
\]
by \cite[Th\'eor\`eme A]{DatLTel}. 

By Lemma \ref{lem:(1,0)pi}
and the geometric lemma (\cf \cite[VI.5.1 Th\'eor\`eme]{RenReprp}), 
we have 
\begin{align*}
 & R^{\bullet} \Hom_{G_{b_1}(F)} \left( 
 R\Gamma_{\mathrm{c}} (\Sht_{b_1,1}^{(m-1,0)}) ,
 R\Hom_{G(F)} 
 \left( R\Gamma_{\mathrm{c}} (\Sht_{b,b_1}^{(1,0)}),\pi \right) \otimes \delta_{b_1}^{-1} \right) \\ 
 & = 
 -R^{\bullet} \Hom_{G_{b_1}(F)} \left( 
 R\Gamma_{\mathrm{c}} (\Sht_{b_1,1}^{(m-1,0)}) ,
 R^{\bullet}\Hom_{T (F)} 
 \left(  
 R^{\bullet}\Gamma_{\mathrm{c}} (\Sht_{T,b,b_1}^{(1,0)}), \pi_{N^{\mathrm{op}}}
 \right) \left(  \frac{1}{2} \right) \otimes \delta_{b_1}^{-1} 
 \right) \\ 
 & = 
 -R^{\bullet} \Hom_{G_{b_1}(F)} \left( 
 R\Gamma_{\mathrm{c}} (\Sht_{b_1,1}^{(m-1,0)}) ,
 R^{\bullet}\Hom_{T (F)} 
 \left( 
 R^{\bullet}\Gamma_{\mathrm{c}} (\Sht_{T,b,b_1}^{(1,0)}), 
 (\rho \otimes \delta_B )+ \rho^w 
 \right) \left( \frac{1}{2} \right) \otimes \delta_{b_1}^{-1} 
 \right) \\ 
 & = 
 -R^{\bullet} \Hom_{G_{b_1}(F)} \left( 
 R\Gamma_{\mathrm{c}} (\Sht_{b_1,1}^{(m-1,0)}) ,
 \rho \right) 
 \otimes \varphi_{\chi_1} 
 \left( -\frac{1}{2} \right) \\ 
 & \quad \ -R^{\bullet} \Hom_{G_{b_1}(F)} \left( 
 R\Gamma_{\mathrm{c}} (\Sht_{b_1,1}^{(m-1,0)}) ,
 \rho^w \otimes \delta_B^{-1} 
 \right) \otimes 
 \varphi_{\chi_2} 
 \left( \frac{1}{2} \right) . 
\end{align*}
Hence 
\begin{align*}
 H_{\mathrm{c}}^{\bullet} (\Sht_{b,1}^{(m,0)}) [ \pi ] 
 & =  
 -H_{\mathrm{c}}^{\bullet} (\Sht_{b,1}^{(m-1,1)}) [\pi] 
 -H_{\mathrm{c}}^{\bullet} (\Sht_{b_1,1}^{(m-1,0)}) [\rho ] 
 \otimes \varphi_{\chi_1} 
 \left( -\frac{1}{2} \right) \\ 
 & \quad \ -H_{\mathrm{c}}^{\bullet} (\Sht_{b_1,1}^{(m-1,0)}) [
 \rho^w \otimes \delta_B^{-1} ]
 \otimes 
 \varphi_{\chi_2} 
 \left( \frac{1}{2} \right) . 
\end{align*}
Therefore we obtain the claim. 
\end{proof}

By Proposition \ref{prop:indf}, 
we can calculate 
$H_{\mathrm{c}}^{\bullet} (\Sht_{b,1}^{(m,0)})[\rho]$ 
and 
$H_{\mathrm{c}}^{\bullet} (\Sht_{b,1}^{(m,0)})[\Ind_{B(F)}^{G(F)} \rho]$ 
in Proposition \ref{prop:indf} inductively. 
We do not pursue the explicit formula here, 
but record the following corollary. 

\begin{cor}\label{cor:parcomb}
The $\GL_2 (F)$-representations 
$H_{\mathrm{c}}^{\bullet} (\Sht_{b,1}^{(m,0)})[\rho]$ 
and 
$H_{\mathrm{c}}^{\bullet} (\Sht_{b,1}^{(m,0)})[\Ind_{B(F)}^{G(F)} \rho]$ 
in Proposition \ref{prop:indf} 
are linear combinations of proper parabolic inductions. 
\end{cor}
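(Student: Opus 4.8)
The plan is to prove both assertions together by induction on $m$, iterating the recursive identities of Proposition \ref{prop:indf}. Since the only proper parabolic subgroups of $\GL_2$ are Borel subgroups, the set of virtual $\GL_2(F)$-representations that are $\bZ$-linear combinations of proper parabolic inductions is precisely the subgroup of $\mathrm{Groth}(\GL_2(F))$ generated by the $[\Ind_{B(F)}^{G(F)}\sigma]$ with $\sigma$ a character of $T(F)$; I will work with virtual representations of $\GL_2(F) \times W_F$ whose $\GL_2(F)$-part lies in this subgroup, noting that this class is stable under $\bZ$-linear combinations, under tensoring with characters of $W_F$, and under Tate twists, since those operations leave the $\GL_2(F)$-module structure untouched. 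For each $m$ I will establish, for all $l$ with $m/2 \geq l \geq 0$, the stated property of $H_{\mathrm{c}}^{\bullet}(\Sht_{1,b}^{(m,0)})[\rho]$ (with $b$ as in Proposition \ref{prop:indf}, which is non-basic exactly when $m \neq 2l$) and, in the case $m = 2l$, of $H_{\mathrm{c}}^{\bullet}(\Sht_{1,b}^{(m,0)})[\Ind_{B(F)}^{G(F)}\rho]$.

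For the base of the induction, I would use that when $l = 0$ the element $b$ is non-basic and the first displayed formula in Proposition \ref{prop:indf}\ref{enu:neq2m} writes $H_{\mathrm{c}}^{\bullet}(\Sht_{1,b}^{(m,0)})[\rho]$ as $\pm\,\Ind_{B(F)}^{G(F)}(\rho \otimes \delta_B^{-1})$ tensored with a character of $W_F$ and a Tate twist, while Proposition \ref{prop:indf}\ref{enu:eq2m} with $l = 0$ gives $(\Ind_{B(F)}^{G(F)}\rho)\boxtimes 1$; this settles $m \in \{0,1\}$, where $l = 0$ is forced.

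For the inductive step I would record the dictionary between the auxiliary data of Proposition \ref{prop:indf} at parameters $(m,l)$ and the setup of Proposition \ref{prop:indf} at smaller parameters: comparing diagonal entries shows that $b_1 = \mathrm{diag}(\varpi^{-(l-1)},\varpi^{(l-1)-(m-1)})$ is the element attached to $(m-1,l-1)$, that $b_1' = \mathrm{diag}(\varpi^{-l},\varpi^{l-(m-1)})$ is the element attached to $(m-1,l)$, and that $b_2 = \varpi b = \mathrm{diag}(\varpi^{-(l-1)},\varpi^{(l-1)-(m-2)})$ is the element attached to $(m-2,l-1)$ (its appearance coming from the central-twist isomorphism $\Sht_{1,b}^{(m-1,1)} \simeq \Sht_{1,b_2}^{(m-2,0)}$ of Proposition \ref{prop:twist}). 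One then checks that each of these triples satisfies $m'/2 \geq l' \geq 0$ with $m' < m$, and --- the point on which I would be most careful --- that the basic/non-basic alternative for each auxiliary element is matched by the Ext chosen in the corresponding formula: in Proposition \ref{prop:indf}\ref{enu:neq2m} one has $m \geq 2l+1$, so $b_1'$ is basic precisely when $m = 2l+1$, which is exactly when the formula pairs it with $\Ind_{B(F)}^{G(F)}(\rho\otimes\delta_B^{-1})$, whereas $b_1$ and $b_2$ are non-basic there and are paired with characters of $T(F)$; in Proposition \ref{prop:indf}\ref{enu:eq2m}, $b_2$ is attached to $(2l-2,l-1)$ and is always basic, correctly paired with $\Ind_{B(F)}^{G(F)}\rho$, while $b_1$ is attached to $(2l-1,l-1)$, non-basic, paired with the $T(F)$-characters $\rho$ and $\rho^w \otimes \delta_B^{-1}$, and the anti-diagonal element $b_1'$ of Proposition \ref{prop:indf}\ref{enu:eq2m} contributes $0$ by \cite[Th\'eor\`eme A]{DatLTel}. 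Feeding the inductive hypothesis into the right-hand sides of the formulas of Proposition \ref{prop:indf} and using the closure properties from the first paragraph then yields the assertion for $(m,l)$; in the degenerate range $l = 1$ the terms at parameters $(m-1,0)$ and $(m-2,0)$ land directly in the $l = 0$ base case.

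There is no new geometric input here beyond Proposition \ref{prop:indf}: the whole proof is the bookkeeping just outlined, and I expect the only genuine care needed is in verifying the parameter inequalities and, above all, the match between the basic/non-basic dichotomy and the representation against which the Ext is taken, once the dictionary $b_1 \leftrightarrow (m-1,l-1)$, $b_1' \leftrightarrow (m-1,l)$, $b_2 \leftrightarrow (m-2,l-1)$ is fixed.
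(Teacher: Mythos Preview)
Your proposal is correct and follows exactly the approach the paper takes: the paper's proof consists of the single sentence ``This follows from Proposition \ref{prop:indf} by induction,'' and you have carefully spelled out that induction, including the parameter dictionary $b_1 \leftrightarrow (m-1,l-1)$, $b_1' \leftrightarrow (m-1,l)$, $b_2 \leftrightarrow (m-2,l-1)$ and the verification that the basic/non-basic dichotomy matches the coefficient representation in each recursive term.
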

\begin{proof}
This follows from 
Proposition \ref{prop:indf} 
by induction. 
\end{proof}

\begin{prop}\label{prop:basind}
We put 
\[
  b_1=
 \begin{pmatrix}
 0 & 1 \\ 
 \varpi & 0 
 \end{pmatrix} 
\]
and 
$b_m=b_1^m$ for $m \in \bZ$. 
For an odd integer $m$, 
we put 
\[
  b_m' =
 \begin{pmatrix}
 \varpi^{\frac{m-1}{2}} & 0 \\ 
 0 & \varpi^{\frac{m+1}{2}} 
 \end{pmatrix}. 
\]
Assume that $m \geq 2$. 
If $m$ is odd or $\varphi$ is cuspidal, 
we have 
\begin{align*}
 H_{\mathrm{c}}^{\bullet} (\Sht_{b_{m},1}^{(m,0)})[\pi_{b_{m}}] 
 =
 H_{\mathrm{c}}^{\bullet} (\Sht_{b_{m-1},1}^{(m-1,0)})[\pi_{b_{m-1}}] \otimes \varphi 
 - 
 H_{\mathrm{c}}^{\bullet} (\Sht_{b_{m-2},1}^{(m-2,0)})[\pi_{b_{m-2}}] \otimes 
 (r_{(1,1)} \circ \varphi) . 
\end{align*} 
If $m$ is even and $\varphi$ is not cuspidal, 
we have 
\begin{align*}
 H_{\mathrm{c}}^{\bullet} (\Sht_{b_{m},1}^{(m,0)})[\pi_{b_{m}}] 
 = {} & 
 H_{\mathrm{c}}^{\bullet} (\Sht_{b_{m-1},1}^{(m-1,0)})[\pi_{b_{m-1}}] \otimes \varphi 
 - 
 H_{\mathrm{c}}^{\bullet} (\Sht_{b_{m-2},1}^{(m-2,0)})[\pi_{b_{m-2}}] \otimes 
 (r_{(1,1)} \circ \varphi) \\ 
 & - 
 H_{\mathrm{c}}^{\bullet} (\Sht_{b_{m-1}',1}^{(m-1,0)})
 [\chi \boxtimes \chi] 
 \otimes \varphi_{\chi} \left( -\frac{1}{2} \right) 
\end{align*} 
where $\chi$ is a character of $F^{\times}$ such that 
$\pi_{b_{m}} \simeq \mathrm{St}_{\chi}$. 
\end{prop}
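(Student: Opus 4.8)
The plan is to run the same argument as in Theorem~\ref{thm:Kcases} and Proposition~\ref{prop:indf}: apply the derived decomposition of Proposition~\ref{prop:decomp} to the two-leg system $\mu_{\bullet}=((m-1,0),(1,0))$ with $b_0=1$ and $b_m=b_{-m}$, and then apply $R\Hom_{J_{b_{-m}}(F)}(-,\pi_{b_{-m}})$ together with Lemma~\ref{lem:RHom}. Since $\GL_2$ is split, the $\GL_2$ tensor-product rule gives $V_{(m-1,0)}\otimes V_{(1,0)}\simeq V_{(m,0)}\oplus V_{(m-1,1)}$ with each multiplicity space $V_{\mu_{\bullet}}^{\lambda}$ one-dimensional, so Corollary~\ref{cor:formula}, after passing to the operation $H_{\mathrm{c}}^{\bullet}(-)[\pi_{b_{-m}}]$ of \S\ref{sec:Kot}, reads
\[
 H_{\mathrm{c}}^{\bullet}(\Sht_{1,b_{-m}}^{(m,0)})[\pi_{b_{-m}}]
 +H_{\mathrm{c}}^{\bullet}(\Sht_{1,b_{-m}}^{(m-1,1)})[\pi_{b_{-m}}]
 =\sum_{[b']\in I_{1,b_{-m}}^{\mu_{\bullet}}}\Xi_{b'},
\]
where (using Lemma~\ref{lem:RHom}, and noting that the even shifts $[2N_{b'}]$ are invisible in $\mathrm{Groth}$) $\Xi_{b'}$ is the Euler characteristic of $R\Hom_{J_{b'}(F)}\bigl(R\Gamma_{\mathrm{c}}(\Sht_{1,b'}^{(m-1,0)}),R\Hom_{J_{b_{-m}}(F)}(R\Gamma_{\mathrm{c}}(\Sht_{b',b_{-m}}^{(1,0)}),\pi_{b_{-m}})\bigr)$.

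First I would pin down $I_{1,b_{-m}}^{((m-1,0),(1,0))}$. By Lemma~\ref{lem:Mempty} applied to each factor (the targets $1$ and $b_{-m}$ are basic), a class $[b']$ in it must have $\kappa(b')=1-m$, satisfy $\nu_{b'}\le(m-1,0)$, and $\sE_{b'}$ must admit a modification bounded by $(1,0)$ onto $\sE_{b_{-m}}$. Using the Fargues--Fontaine classification of rank-two bundles one checks: if $m$ is odd, a colength-one subbundle of the stable bundle $\sE_{b_{-m}}$ (slope $m/2$) is forced to have both Harder--Narasimhan slopes equal to $(m-1)/2$, hence to be split, so the only class is the isoclinic $b_{1-m}$; if $m$ is even, there is in addition the Hodge--Newton reducible class $b_{1-m}'$ with slopes $\{m/2,(m-2)/2\}$, and nothing else. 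Here $N_{b_{1-m}}=0$ while $N_{b_{1-m}'}=\langle 2\rho_G,\nu_{b_{1-m}'}\rangle=1$. This combinatorial step is the heart of the proof.

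Next I would evaluate the three kinds of contributions. \textbf{(i)} For $\Sht_{1,b_{-m}}^{(m-1,1)}$ on the left, writing $(m-1,1)=(m-2,0)+(1,1)$ with $(1,1)$ central and using $b_{2-m}=\varpi b_{-m}$ (so that $J_{b_{2-m}}(F)=J_{b_{-m}}(F)$), the twist morphism of \S\ref{sec:Twi} with Proposition~\ref{prop:twist} identifies $H_{\mathrm{c}}^{\bullet}(\Sht_{1,b_{-m}}^{(m-1,1)})[\pi_{b_{-m}}]$ with $H_{\mathrm{c}}^{\bullet}(\Sht_{1,b_{2-m}}^{(m-2,0)})[\pi_{b_{2-m}}]\otimes(r_{(1,1)}\circ\varphi)$, which upon transposition yields the summand $-H_{\mathrm{c}}^{\bullet}(\Sht_{1,b_{2-m}}^{(m-2,0)})[\pi_{b_{2-m}}]\otimes(r_{(1,1)}\circ\varphi)$. \textbf{(ii)} For $[b']=[b_{1-m}]$, Proposition~\ref{prop:Mtriv} gives $\Sht_{b_{1-m},b_{-m}}^{(1,0)}\simeq\Sht_{J_{b_{-m}},b_1,1}^{(1,0)}$, a basic-to-basic space for the minuscule cocharacter $(1,0)$; Theorem~\ref{thm:Kcases} then gives $R\Hom_{J_{b_{-m}}(F)}(R\Gamma_{\mathrm{c}}(\Sht_{b_{1-m},b_{-m}}^{(1,0)}),\pi_{b_{-m}})\simeq\pi_{b_{1-m}}\boxtimes(r_{(1,0)}\circ\varphi)=\pi_{b_{1-m}}\boxtimes\varphi$, so $\Xi_{b_{1-m}}=H_{\mathrm{c}}^{\bullet}(\Sht_{1,b_{1-m}}^{(m-1,0)})[\pi_{b_{1-m}}]\otimes\varphi$. \textbf{(iii)} For $m$ even and $[b']=[b_{1-m}']$, here $J_{b_{1-m}'}(F)=T(F)$ and Proposition~\ref{prop:Mtriv} gives $\Sht_{b_{1-m}',b_{-m}}^{(1,0)}\simeq\Sht_{\GL_2,\mathrm{diag}(\varpi,1),1}^{(1,0)}$, which is exactly the $m=0$ instance of Lemma~\ref{lem:(1,0)pi}; combining that lemma (whose statement carries a minus sign), the geometric lemma applied to $(\mathrm{St}_{\chi})_{N^{\mathrm{op}}}$ with $\pi_{b_{-m}}=\mathrm{St}_{\chi}$, and Lemma~\ref{lem:RHom}, one gets $\Xi_{b_{1-m}'}=-H_{\mathrm{c}}^{\bullet}(\Sht_{1,b_{1-m}'}^{(m-1,0)})[\chi\boxtimes\chi]\otimes\varphi_{\chi}\left(\tfrac{1}{2}\right)$. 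If $\varphi$ is cuspidal then $\pi_{b_{-m}}$ is supercuspidal, its Jacquet module vanishes, and $\Xi_{b_{1-m}'}=0$; this is why the statement groups ``$m$ odd or $\varphi$ cuspidal'' together.

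Assembling \textbf{(i)}--\textbf{(iii)} with these signs gives the two claimed identities. I expect the two delicate points to be the exact determination of $I_{1,b_{-m}}^{((m-1,0),(1,0))}$ in the second step, and the twist bookkeeping in \textbf{(i)} and \textbf{(iii)}: confirming that the factor in \textbf{(i)} is precisely $r_{(1,1)}\circ\varphi$ (not a further twist of it), and that the half-twists coming from Lemma~\ref{lem:(1,0)pi} and the geometric lemma combine to the single $\varphi_{\chi}\left(\tfrac{1}{2}\right)$ in front of $H_{\mathrm{c}}^{\bullet}(\Sht_{1,b_{1-m}'}^{(m-1,0)})[\chi\boxtimes\chi]$.
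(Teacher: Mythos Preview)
Your proposal is correct and follows essentially the same approach as the paper: apply Proposition~\ref{prop:decomp} and Lemma~\ref{lem:RHom} to the pair $((m-1,0),(1,0))$, identify the index set as $\{[b_{1-m}]\}$ (odd $m$) or $\{[b_{1-m}],[b_{1-m}']\}$ (even $m$), evaluate the basic contribution via Corollary~\ref{cor:minu}, and handle the non-basic term through Lemma~\ref{lem:(1,0)pi} together with $(\mathrm{St}_{\chi})_{N^{\mathrm{op}}}\simeq(\chi\boxtimes\chi)\otimes\delta_B$. Your write-up is more explicit than the paper's in two places---the combinatorial determination of $I_{1,b_{-m}}^{((m-1,0),(1,0))}$ and the central-twist identification of $H_{\mathrm{c}}^{\bullet}(\Sht_{1,b_{-m}}^{(m-1,1)})[\pi_{b_{-m}}]$ with $H_{\mathrm{c}}^{\bullet}(\Sht_{1,b_{2-m}}^{(m-2,0)})[\pi_{b_{2-m}}]\otimes(r_{(1,1)}\circ\varphi)$---both of which the paper absorbs into the phrase ``the claim follows from Corollary~\ref{cor:minu}''; also note that your detour through Proposition~\ref{prop:Mtriv} in step~(iii) is harmless but unnecessary, since for $m$ even $b_{-m}$ is central and Lemma~\ref{lem:(1,0)pi} applies directly with its parameter equal to $-m/2$.
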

\begin{proof}
Assume that $m$ is odd. 
By Proposition \ref{prop:decomp} and Lemma \ref{lem:RHom}, 
the sum 
\[
 R^{\bullet} \Hom_{\GL_2 (F)} 
 \left( R\Gamma_{\mathrm{c}} (\Sht_{b_{m},1}^{(m,0)}), \pi_{b_{m}} 
 \right) + 
 R^{\bullet} \Hom_{\GL_2 (F)} 
 \left( R\Gamma_{\mathrm{c}} (\Sht_{b_{m},1}^{(m-1,1)}), \pi_{b_{m}} 
 \right)
\]
is equal to 
\begin{align*}
 R^{\bullet} & \Hom_{G_{b_{m-1}}(F)} \left( 
 R\Gamma_{\mathrm{c}} (\Sht_{b_{m-1},1}^{(m-1,0)}) ,
 R\Hom_{\GL_2 (F)} 
 \left( R\Gamma_{\mathrm{c}} (\Sht_{b_{m},b_{m-1}}^{(1,0)}),\pi_{b_{m}} \right) \right) . 
\end{align*}
Hence the claim follows from Corollary \ref{cor:minu}. 

Assume that $m$ is even. 
By Proposition \ref{prop:decomp} and Lemma \ref{lem:RHom}, 
the sum 
\[
 R^{\bullet} \Hom_{\GL_2 (F)} 
 \left( R\Gamma_{\mathrm{c}} (\Sht_{b_{m},1}^{(m,0)}), \pi_{b_{m}} 
 \right) + 
 R^{\bullet} \Hom_{\GL_2 (F)} 
 \left( R\Gamma_{\mathrm{c}} (\Sht_{b_{m},1}^{(m-1,1)}), \pi_{b_{m}} 
 \right)
\]
is equal to the sum 
\begin{align*}
 R^{\bullet} & \Hom_{G_{b_{m-1}}(F)} \left( 
 R\Gamma_{\mathrm{c}} (\Sht_{b_{m-1},1}^{(m-1,0)}) ,
 R\Hom_{\GL_2 (F)} 
 \left( R\Gamma_{\mathrm{c}} (\Sht_{b_{m},b_{m-1}}^{(1,0)}),\pi_{b_{m}} \right) \right) \\ 
 &+R^{\bullet} \Hom_{G_{b_{m-1}'}(F)} \left( 
 R\Gamma_{\mathrm{c}} (\Sht_{b_{m-1}',1}^{(m-1,0)}) ,
 R\Hom_{\GL_2 (F)} 
 \left( R\Gamma_{\mathrm{c}} (\Sht_{b_m,b_{m-1}'}^{(1,0)}),\pi_{b_{m}} \right) 
 \otimes \delta_{B}^{-1} \right). 
\end{align*}
Hence, by Corollary \ref{cor:minu}, 
it suffices to show that 
\[
 R\Hom_{\GL_2 (F)} 
 \left( R\Gamma_{\mathrm{c}} (\Sht_{b_m,b_{m-1}'}^{(1,0)}),\pi_{b_{m}} \right) = 
\begin{cases}
 0 & \textrm{if $\varphi$ is cuspidal,} \\ 
 -((\chi \boxtimes \chi) \otimes \delta_B )
 \otimes \varphi_{\chi} \left( -\frac{1}{2} \right)  & 
 \textrm{if $\varphi$ is not cuspidal.}
\end{cases}
\]
By Lemma \ref{lem:(1,0)pi}, 
we have 
\begin{align*}
 &R^{\bullet}\Hom_{\GL_2 (F)} 
 \left( R\Gamma_{\mathrm{c}} (\Sht_{b_{m},b_{m-1}'}^{(1,0)}),\pi_{b_{m}} 
 \right) \\ 
&= -R^{\bullet}\Hom_{T (F)} 
 \left(  
 R^{\bullet}\Gamma_{\mathrm{c}} (\Sht_{T,b_{m},b_{m-1}'}^{(1,0)}), (\pi_{b_{m}})_{N^{\mathrm{op}}} 
 \right) \left( \frac{1}{2} \right) . 
\end{align*}
Hence the claim follows from 
$(\mathrm{St}_{\chi})_{N^{\mathrm{op}}} \simeq (\chi \boxtimes \chi) \otimes \delta_B$. 
\end{proof}

\begin{prop}\label{prop:basdiv}
We put 
\[
  b_1=
 \begin{pmatrix}
 0 & 1 \\ 
 \varpi & 0 
 \end{pmatrix} 
\]
and 
$b_m=b_1^m$ for $m \in \bZ$. 
For $m \geq 1$, we have 
\begin{align*}
 R^{\bullet} & \Hom_{G_{b_{m}} (F)} 
 \left( R\Gamma_{\mathrm{c}} (\Sht_{b_{m},b_{-1},}^{(m+1,0)}), \pi_{b_{m}} 
 \right) \\ 
 &= 
 R^{\bullet} \Hom_{\GL_2(F)} \left( 
 R\Gamma_{\mathrm{c}} (\Sht_{1,b_{-1}}^{(1,0)}) , 
 R\Hom_{G_{b_{m}} (F)} 
 \left( R\Gamma_{\mathrm{c}} (\Sht_{b_{m},1}^{(m,0)}),\pi_{b_{m}} \right) \right) \\ 
 & \quad -  R^{\bullet} \Hom_{G_{b_{m-2}} (F)} 
 \left( R\Gamma_{\mathrm{c}} (\Sht_{b_{m-2},b_{-1}}^{(m-1,0)}), \pi_{b_{m-2}} 
 \right) \otimes (r_{(1,1)} \circ \varphi) . 
\end{align*}
\end{prop}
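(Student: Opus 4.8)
The plan is to apply the decomposition of Proposition~\ref{prop:decomp} to the two-leg system $\mu_{\bullet}=((1,0),(m,0))$ with left endpoint the matrix $b_1$ and right endpoint $b_{-m}$, and then to apply $R\Hom_{J_{b_{-m}}(F)}(-,\pi_{b_{-m}})$ together with Lemma~\ref{lem:RHom}. Since $\GL_2$ is split, ${}^L\GL_2=\wh{\GL_2}=\GL_2$, and the Clebsch--Gordan rule gives $V_{(1,0)}\otimes V_{(m,0)}=V_{(m+1,0)}\oplus V_{(m,1)}$ with both multiplicity spaces one-dimensional and carrying the trivial $W_F$-action; so for this system the left-hand side of Proposition~\ref{prop:decomp} reads $R\Gamma_{\mathrm{c}}(\Sht_{b_1,b_{-m}}^{(m+1,0)})\oplus R\Gamma_{\mathrm{c}}(\Sht_{b_1,b_{-m}}^{(m,1)})$.

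The essential input is that $I_{b_1,b_{-m}}^{((1,0),(m,0))}=\{[1]\}$, with $N_1=0$ because $1$ is basic. To see the first statement: a point of $\Sht_{b_1,b'}^{(1,0)}$ is a modification $\sE_{b_1}\to\sE_{b'}$ of type $(1,0)$ on the Fargues--Fontaine curve; such a $[b']$ has $\kappa(b')=0$, so $\sE_{b'}$ is a rank-$2$, degree-$0$ bundle which is a subsheaf of colength one of $\sE_{b_1}$, the stable bundle of slope $1/2$. Every line sub-bundle of $\sE_{b_1}$ has degree $\le 0$, hence so does every line sub-bundle of $\sE_{b'}$; since $\deg\sE_{b'}=0$, all Harder--Narasimhan slopes of $\sE_{b'}$ vanish, $\sE_{b'}$ is trivial, and $[b']=[1]$. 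Conversely $\Sht_{b_1,1}^{(1,0)}$ and $\Sht_{1,b_{-m}}^{(m,0)}$ are nonempty by the usual criterion (Lemma~\ref{lem:Mempty}, Proposition~\ref{prop:Mtriv}), as $[b_1]$ and $[b_{-m}]$ are basic with Newton point equal to the relevant average cocharacter, so $[1]$ really lies in $I_{b_1,b_{-m}}^{((1,0),(m,0))}$. Since there is a single stratum and $N_1=0$, Proposition~\ref{prop:decomp} yields an isomorphism
\[
 R\Gamma_{\mathrm{c}}(\Sht_{b_1,b_{-m}}^{(m+1,0)})\oplus R\Gamma_{\mathrm{c}}(\Sht_{b_1,b_{-m}}^{(m,1)})
 \;\simeq\;
 \bigl(R\Gamma_{\mathrm{c}}(\Sht_{b_1,1}^{(1,0)})\otimes R\Gamma_{\mathrm{c}}(\Sht_{1,b_{-m}}^{(m,0)})\bigr)\otimes_{\GL_2(F)}^{\bL}\ol{\bQ}_{\ell}.
\]

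Applying $R\Hom_{J_{b_{-m}}(F)}(-,\pi_{b_{-m}})$ to this isomorphism and rewriting its right-hand side by Lemma~\ref{lem:RHom} (with $b_0=b_1$, middle element $1$, $b_2=b_{-m}$) as $R\Hom_{\GL_2(F)}\!\bigl(R\Gamma_{\mathrm{c}}(\Sht_{b_1,1}^{(1,0)}),R\Hom_{J_{b_{-m}}(F)}(R\Gamma_{\mathrm{c}}(\Sht_{1,b_{-m}}^{(m,0)}),\pi_{b_{-m}})\bigr)$, one obtains in the Grothendieck group of $J_{b_1}(F)\times W_F$-representations the stated identity, provided one identifies the leftover $V_{(m,1)}$-contribution $R^{\bullet}\Hom_{J_{b_{-m}}(F)}(R\Gamma_{\mathrm{c}}(\Sht_{b_1,b_{-m}}^{(m,1)}),\pi_{b_{-m}})$ with $R^{\bullet}\Hom_{J_{b_{2-m}}(F)}(R\Gamma_{\mathrm{c}}(\Sht_{b_1,b_{2-m}}^{(m-1,0)}),\pi_{b_{2-m}})\otimes(r_{(1,1)}\circ\varphi)$.

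For this last identification I would invoke the twist morphism of Section~\ref{sec:Twi} and Proposition~\ref{prop:twist}: writing $(m,1)=(m-1,0)+(1,1)$ with $(1,1)$ the central cocharacter of $\GL_2$ (restricting to the identity cocharacter of $Z^0=\bG_m$), and noting that $b_{-m}$ and $b_{2-m}$ differ by the central element $b_1^{2}=\varpi I$, Proposition~\ref{prop:twist} relates $R\Gamma_{\mathrm{c}}(\Sht_{b_1,b_{-m}}^{(m,1)})$ to $R\Gamma_{\mathrm{c}}(\Sht_{b_1,b_{2-m}}^{(m-1,0)})$ up to twisting, over $Z^0(F)=F^{\times}$, by the cohomology of an appropriate $\bG_m$-shtuka moduli space. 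Since $J_{b_{-m}}=J_{b_{2-m}}$ and $\pi_{b_{2-m}}=\pi_{b_{-m}}$ (a central shift changes neither the group nor the Langlands parameter), and since the centre $F^{\times}$ acts on $\pi_{b_{-m}}$ through its central character, whose Langlands parameter is $\det\circ\varphi=r_{(1,1)}\circ\varphi$, pairing the $\bG_m$-factor against $\pi_{b_{-m}}$ contributes exactly the one-dimensional Weil representation $r_{(1,1)}\circ\varphi$ (there is no Tate twist, as $\langle\rho_{\GL_2},(1,1)\rangle=0$). I expect this final step to be the main technical obstacle: carefully propagating the action of the central $F^{\times}$ and the resulting Weil-group twist through the $Z^0(F)$-torsor underlying Proposition~\ref{prop:twist}; everything else is a formal combination of Propositions~\ref{prop:decomp} and~\ref{prop:twist}, Lemma~\ref{lem:RHom}, and the semistability computation above.
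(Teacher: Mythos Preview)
Your argument is correct and follows exactly the paper's route: the paper's proof is the single sentence ``This follows from Proposition~\ref{prop:decomp} and Lemma~\ref{lem:RHom},'' and your Clebsch--Gordan decomposition, the semistability computation giving $I_{b_1,b_{-m}}^{((1,0),(m,0))}=\{[1]\}$, and the central twist identifying the $(m,1)$-term are precisely the details that sentence suppresses (the same silent twist is used in the proof of Proposition~\ref{prop:basind}). Your final worry is misplaced: since $b_{-m}=\varpi^{-1}b_{2-m}$ is a central shift, Proposition~\ref{prop:twist} applies verbatim, and the factor $r_{(1,1)}\circ\varphi$ drops out because $Z^0(F)=F^{\times}$ acts on $\pi_{b_{-m}}$ through its central character, whose Langlands parameter is $\det\circ\varphi$.
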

\begin{proof}
This follows from 
Proposition \ref{prop:decomp} and Lemma \ref{lem:RHom}. 
\end{proof}

\begin{thm}\label{thm:nonmin}
Assume that $n=2$. Then 
Conjecture \ref{conj:Kot} is true if 
$\kappa(b)$ is odd or $\varphi$ is cuspidal. 
\end{thm}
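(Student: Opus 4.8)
The plan is to reduce, by means of the twist, inversing and duality morphisms, to the moduli spaces handled by the inductive formulas of \S\ref{sec:ind}, and then to induct on the degree of $\mu$, with the minuscule case (Corollary \ref{cor:minu}) as the base. First I would apply the twist morphism (Proposition \ref{prop:twist}) with a central cocharacter to assume $\mu=(m,0)$ with $m\geq 0$; this only twists $\varphi$ by a character and $b,b'$ by central elements, hence preserves the parity of $\kappa(b)$ and the cuspidality of $\varphi$. Every basic class in $\GL_2(\breve{F})$ is a power of $b_1=\begin{pmatrix}0&1\\\varpi&0\end{pmatrix}$; write $b_k=b_1^k$, so $\kappa(b_k)=k$, and we may take $b=b_i$, $b'=b_j$ with $i-j=m$. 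Using further central twists, the inversing morphism \eqref{eq:invmor}, and the duality morphism (Lemma \ref{lem:dualiso}, Lemma \ref{lem:contr}), I would reduce Conjecture \ref{conj:Kot} for $\Sht_{b_i,b_j}^{(m,0)}$ to Conjecture \ref{conj:Kot} for $\Sht_{1,b_{-m}}^{(m,0)}$ when $\kappa(b)=i$ is even, and for $\Sht_{b_1,b_{-m}}^{(m+1,0)}$ when $\kappa(b)$ is odd; in the former case the hypothesis that $\varphi$ is cuspidal is invoked.

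The building block is as follows. Write $r_k=r_{(k,0)}\circ\varphi$, $d=r_{(1,1)}\circ\varphi$, and set $A_m=H_{\mathrm{c}}^{\bullet}(\Sht_{1,b_{-m}}^{(m,0)})[\pi_{b_{-m}}]$, where $\pi_{b_{-m}}$ and the $\GL_2(F)$-representation $\pi_1$ attached to $\varphi$ correspond through the local Langlands and Jacquet--Langlands correspondences; recall the Clebsch--Gordan relation $r_k=r_{k-1}\otimes r_1-r_{k-2}\otimes d$ in $\mathrm{Groth}(W_F)$. I claim that for every discrete $\varphi$ one has $A_m\equiv \pi_1\boxtimes r_m$ modulo a $\bZ$-linear combination of proper parabolic inductions of $\GL_2(F)$, with equality when $\varphi$ is cuspidal. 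This is an induction on $m$: the cases $m=0$ (a point) and $m=1$ (minuscule, Corollary \ref{cor:minu}) give $A_0=\pi_1\boxtimes r_0$ and $A_1=\pi_1\boxtimes r_1$; for $m\geq 2$, Proposition \ref{prop:basind} provides the recursion $A_m=A_{m-1}\otimes r_1-A_{m-2}\otimes d$ when $m$ is odd or $\varphi$ is cuspidal, and the same recursion together with one further term---which by Corollary \ref{cor:parcomb} is a combination of proper parabolic inductions---otherwise; the inductive hypothesis and Clebsch--Gordan then close the induction. This already settles the case $\kappa(b)$ even: if $\varphi$ is cuspidal then $A_m=\pi_1\boxtimes r_m$, which is Conjecture \ref{conj:Kot} for $\Sht_{1,b_{-m}}^{(m,0)}$.

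For the case $\kappa(b)$ odd I would induct on $m$ for $\Sht_{b_1,b_{-m}}^{(m+1,0)}$, the base being the minuscule case $m=0$ (Corollary \ref{cor:minu}). For $m\geq 1$, Proposition \ref{prop:basdiv} expresses $R\Hom_{J_{b_{-m}}(F)}(R\Gamma_{\mathrm{c}}(\Sht_{b_1,b_{-m}}^{(m+1,0)}),\pi_{b_{-m}})$ as $R\Hom_{\GL_2(F)}(R\Gamma_{\mathrm{c}}(\Sht_{b_1,1}^{(1,0)}),A_m)$ minus $R\Hom_{J_{b_{2-m}}(F)}(R\Gamma_{\mathrm{c}}(\Sht_{b_1,b_{2-m}}^{(m-1,0)}),\pi_{b_{2-m}})\otimes d$. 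By \cite[Th\'eor\`eme A]{DatLTel} the functor $R\Hom_{\GL_2(F)}(R\Gamma_{\mathrm{c}}(\Sht_{b_1,1}^{(1,0)}),-)$ kills proper parabolic inductions; so by the building-block claim and Corollary \ref{cor:minu} (applicable since $\varphi$, hence $\pi_1$, is discrete) the first term is $(\pi_{b_1}\boxtimes r_1)\otimes r_m$, while by the inductive hypothesis the second term is $(\pi_{b_1}\boxtimes r_{m-1})\otimes d$. Clebsch--Gordan then yields $\pi_{b_1}\boxtimes r_{m+1}$, i.e. Conjecture \ref{conj:Kot} for $\Sht_{b_1,b_{-m}}^{(m+1,0)}$; unwinding the reductions finishes the proof.

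The crux is the non-cuspidal half of the $\kappa(b)$ odd case: one must verify that the discrepancy between $A_m$ and the Kottwitz formula consists exactly of \emph{proper} parabolic inductions---so that Dat's vanishing theorem erases it precisely---which requires carefully unwinding Proposition \ref{prop:basind} and Corollary \ref{cor:parcomb}, including the Steinberg-versus-one-dimensional constituents that arise when $b_{-m}$ is split. The remaining, more routine, task is the bookkeeping of the character and Tate twists and of the contragredients throughout the reduction step.
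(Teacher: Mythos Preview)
Your proposal is correct and follows essentially the same route as the paper: reduce by twisting to $\mu=(m,0)$ with $b\in\{1,b_1\}$, prove inductively via Proposition \ref{prop:basind} and Corollary \ref{cor:parcomb} that $A_m-\pi_1\boxtimes r_m$ is a combination of proper parabolic inductions (zero in the cuspidal case), and then handle $b=b_1$ by Proposition \ref{prop:basdiv} together with \cite[Th\'eor\`eme~A]{DatLTel}. The paper organizes the case split slightly differently (cuspidal versus non-cuspidal rather than by the parity of $\kappa(b)$), and your induction for odd $\kappa(b)$ implicitly also needs the trivial degree-zero base case $\Sht_{b_1,b_1}^{(0,0)}$ alongside $m=0$.
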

\begin{proof}
We put 
\[
  b_1=
 \begin{pmatrix}
 0 & 1 \\ 
 \varpi & 0 
 \end{pmatrix}. 
\]
To show the claim, 
we may assume that 
$\mu=(m,0)$ for some $m \geq 0$ and 
$b$ is $1$ or $b_1$ 
by twisting. 

Assume that $\varphi$ is cuspidal. 
If $b=1$, we can show the claim by induction 
using Proposition \ref{prop:basind}. 
If $b=b_1$, 
we can show the claim by induction 
using Proposition \ref{prop:basdiv} and 
the case for $b=1$. 

It remains to treat the case where 
$\varphi$ is not cuspidal and $b=b_1$. 
First, we can show that 
\[
 H_{\mathrm{c}}^{\bullet} (\Sht_{b',1}^{(m,0)})[\pi_{b'}] - 
 \pi_{1} \boxtimes (r_{(m,0)} \circ \varphi) 
\]
is a linear combination of proper parabolic inductions 
as representations of $\GL_2 (F)$ 
using Corollary \ref{cor:parcomb} 
and Proposition \ref{prop:basind}. 
Hence, the claim follows from 
Proposition \ref{prop:basdiv} and 
\cite[Th\'eor\`eme A]{DatLTel}. 
\end{proof}

On the other hand, the following example shows that 
Conjecture \ref{conj:Kot} is not true if 
$\mu$ is not minuscule and $\varphi$ is not cuspidal. 

\begin{eg}\label{eg:(2,0)}
Let $\mu=(2,0)$ and 
$b$ be a basic element such that $\kappa(b)=2$. 
Assume that $\varphi$ is not cuspidal and take 
a character $\chi$ of $F^{\times}$ 
such that $\pi_1 \simeq \mathrm{St}_{\chi}$. 
Then we have 
\begin{align*}
 R^{\bullet} & \Hom_{G_b (F)} 
 \left( R\Gamma_{\mathrm{c}} (\Sht_{b,1}^{\mu}), \pi_b 
 \right) \\ 
 &=
 \pi_1 \boxtimes (r_{\mu} \circ \varphi) 
 + 
 \left( \Ind_{B(F)}^{\GL_2 (F)} 
  (\chi \boxtimes \chi ) \right) 
 \boxtimes (r_{(1,1)} \circ \varphi)
\end{align*} 
by Proposition \ref{prop:indf} 
and Proposition \ref{prop:basind}. 
\end{eg}

\begin{rem}
Example \ref{eg:(2,0)} 
is compatible with the main theorem of 
\cite{HKWKotloc}, 
since the representation 
$\Ind_{B(F)}^{\GL_2 (F)} 
 (\chi \boxtimes \chi)$ 
has trace $0$ on regular elliptic elements. 
\end{rem}

\begin{rem}\label{rem:errexp}
The error term in Example \ref{eg:(2,0)} 
supports that 
the expectation in \cite[Remark 4.6]{FarGover} 
is true. 
\end{rem}

\begin{eg}\label{eg:(3,0)}
Let $\chi_1, \chi_2 \colon F^{\times} \to \ol{\bQ}_{\ell}^{\times}$ 
be characters. 
Let $\varphi_{\chi_i} \colon W_F \to \ol{\bQ}_{\ell}^{\times}$ 
be the character corresponding to $\chi_i$. 
We put 
$b=\begin{pmatrix}
 \varpi & 0 \\ 
 0 & \varpi^{2} 
 \end{pmatrix}$ 
and $\mu=(3,0)$. 
We put $\rho=\chi_1 \boxtimes \chi_2$ as representations of $T (F)$. 
Then we have 
\begin{align*}
 H^{\bullet} (\Sht_{G,b,1}^{\mu})[\rho] 
 = 
 -(\Ind_{B(F)}^{G(F)} \rho )
 \boxtimes \varphi_{\chi_1} 
 \otimes 
 \varphi_{\chi_2}^2 \left( \frac{1}{2} \right) . 
\end{align*} 
\end{eg}
\begin{proof}
We put 
\[
 b_1=
 \begin{pmatrix}
 1 & 0 \\ 
 0 & \varpi^{2} 
 \end{pmatrix}, 
 \quad  
 b_1'=
 \begin{pmatrix}
 \varpi & 0 \\ 
 0 & \varpi 
 \end{pmatrix}, 
 \quad 
 b_2=
 \begin{pmatrix}
 1 & 0 \\ 
 0 & \varpi 
 \end{pmatrix}. 
\]
By Proposition \ref{prop:indf}, 
we have 
\begin{align*}
 H_{\mathrm{c}}^{\bullet} & (\Sht_{b,1}^{(3,0)})[\rho] \\ 
 =& 
 - H_{\mathrm{c}}^{\bullet} (\Sht_{b_1,1}^{(2,0)}) 
 [\rho ] 
 \otimes \varphi_{\chi_1} \left( -\frac{1}{2} \right) 
 -
 H_{\mathrm{c}}^{\bullet} (\Sht_{b_2,1}^{(1,0)})[\rho] 
 \otimes \varphi_{\chi_1} \otimes \varphi_{\chi_2} \\ 
 & 
 - H_{\mathrm{c}}^{\bullet} (\Sht_{b_1',1}^{(2,0)}) 
 [\Ind_{B(F)}^{G(F)} \rho ] 
 \otimes 
 \varphi_{\chi_2} \left( \frac{1}{2} \right)\\ 
 =& 
 -\Ind_{B(F)}^{G(F)} (\rho) 
 \boxtimes \varphi_{\chi_1} 
 \otimes 
 \varphi_{\chi_2}^2 \left( \frac{1}{2} \right) 
 +(\Ind_{B(F)}^{G(F)} \rho )
 \boxtimes \varphi_{\chi_1} 
 \otimes 
 \varphi_{\chi_2}^2 \left( \frac{1}{2} \right) \\ 
 &+H_{\mathrm{c}}^{\bullet} (\Sht_{b_2,1}^{(1,0)}) [\rho ] 
 \otimes \varphi_{\chi_1} 
 \otimes 
 \varphi_{\chi_2} +H_{\mathrm{c}}^{\bullet} (\Sht_{b_2,1}^{(1,0)}) 
 [\rho^w \otimes \delta_B^{-1}]
 \otimes 
 \varphi_{\chi_2}^2 
 \left( 1 \right) \\ 
 & +H_{\mathrm{c}}^{\bullet} 
 (\Sht_{1,1}^{(0,0)})[\Ind_{B(F)}^{G(F)} \rho ] 
 \otimes \varphi_{\chi_1} \otimes \varphi_{\chi_2}^2 
 \left( \frac{1}{2} \right), \\ 
 = & 
 -\Ind_{B(F)}^{G(F)} (\rho^w \otimes \delta_B^{-1}) 
 \boxtimes \varphi_{\chi_1} \otimes 
 \varphi_{\chi_2}^2 
 \left( \frac{1}{2} \right) . 
\end{align*}
Therefore we obtain the claim. 
\end{proof}

\begin{rem}\label{rem:HV30}
We use notation in Example \ref{eg:(3,0)}. 
We define $I_{b,\mu,T}$ in the same way as \cite[(31)]{RVlocSh}. 
Then we have $I_{b,\mu,T}=\emptyset$. 
Therefore 
Example \ref{eg:(3,0)} shows that 
the non-minuscule generalization of 
\cite[Conjecture 8.5]{RVlocSh} does not hold as it is. 
We note that $([b],\mu)$ is not Hodge--Newton reducible 
(\cf \cite[Definition 4.28]{RVlocSh}). 
\end{rem}


\noindent
Naoki Imai\\
Graduate School of Mathematical Sciences, The University of Tokyo, 
3-8-1 Komaba, Meguro-ku, Tokyo, 153-8914, Japan \\
naoki@ms.u-tokyo.ac.jp 

\end{document}